\let\mathcaltmp\mathcal
\let\mathcal\mathscr
\let\mathscr\mathcaltmp
\newcommand{\eqnum}{\refstepcounter{equation}\textup{\tagform@{\theequation}}}
\makeatletter \@addtoreset{equation}{section} \makeatother
\renewcommand{\theequation}{\thesection.\roman{equation}}
\newtheorem{thmX}{Theorem}
\newtheorem{corX}[thmX]{Corollary}
\newtheorem{thm}{Theorem}[section]
\newtheorem*{thm*}{Theorem}
\newtheorem{lem}[thm]{Lemma}
\newtheorem{cor}[thm]{Corollary}
\newtheorem{prop}[thm]{Proposition}
\newtheorem*{defthm*}{Definition/Theorem}
\theoremstyle{definition}
\newtheorem{defn}[thm]{Definition}
\newtheorem{rem}[thm]{Remark}
\newtheorem{exam}[thm]{Example}
\newtheorem{constr}[thm]{Construction}
\newtheorem{notat}[thm]{Notation}
\newtheorem{quest}[thm]{Question}
\newtheorem*{exam*}{Example}
\newcommand\arXiv[1]{\href{http://arxiv.org/abs/#1}{arXiv:#1}}
\newcommand{\nc}{\newcommand}
\nc{\renc}{\renewcommand}
\nc{\ssec}{\subsection}
\nc{\sssec}{\subsubsection}
\nc{\on}{\operatorname}
\nc{\term}[1]{#1\xspace}
\tikzset{
  commutative diagrams/.cd,
  arrow style=tikz,
  diagrams={>=latex}}
\tikzset{
  column sep/.code=\def\pgfmatrixcolumnsep{\pgf@matrix@xscale*(#1)},
  row sep/.code   =\def\pgfmatrixrowsep{\pgf@matrix@yscale*(#1)},
  matrix xscale/.code=%
    \pgfmathsetmacro\pgf@matrix@xscale{\pgf@matrix@xscale*(#1)},
  matrix yscale/.code=%
    \pgfmathsetmacro\pgf@matrix@yscale{\pgf@matrix@yscale*(#1)},
  matrix scale/.style={/tikz/matrix xscale={#1},/tikz/matrix yscale={#1}}}
\def\pgf@matrix@xscale{1}
\def\pgf@matrix@yscale{1}
\setlist[enumerate,1]{label={(\alph*)},itemsep=\parskip,leftmargin=*}
\newlist{thmlist}{enumerate}{1}
\setlist[thmlist,1]{
  label={\em(\roman*)}, ref={(\roman*)},
  itemsep=0.5em,
  align=right,widest=vi)}
\newlist{thmlistbis}{enumerate}{1}
\setlist[thmlistbis,1]{
  label={\em(\roman*~\textit{bis})},
  ref={(\roman*}~\textit{bis}\upshape{)},
  itemsep=0.5em,
  leftmargin=0pt, align=right, widest=vi)}
\newlist{defnlist}{enumerate}{2}
\setlist[defnlist,1]{
  label={(\roman*)}, ref={(\roman*)},
  itemsep=0.5em,
  topsep=0em,
  leftmargin=*,
  align=left, widest=vi)}
\setlist[defnlist,2]{
  label={(\alph*)}, ref={(\alph*)},
  itemsep=0.75em,
  labelsep=0em,labelindent=0em,leftmargin=*,align=left,widest=vi),
  topsep=0.75em}
\newlist{inlinelist}{enumerate*}{1}
\setlist[inlinelist,1]{label={(\alph*)}}
\newlist{inlinedefnlist}{enumerate*}{1}
\definecolor{green}{HTML}{38550C}
\setlist[inlinedefnlist,1]{label={\color{green}(\roman*)}}
\nc{\cA}{\ensuremath{\mathcal{A}}\xspace}
\nc{\cB}{\ensuremath{\mathcal{B}}\xspace}
\nc{\cC}{\ensuremath{\mathcal{C}}\xspace}
\nc{\cD}{\ensuremath{\mathcal{D}}\xspace}
\nc{\cE}{\ensuremath{\mathcal{E}}\xspace}
\nc{\cF}{\ensuremath{\mathcal{F}}\xspace}
\nc{\cG}{\ensuremath{\mathcal{G}}\xspace}
\nc{\cH}{\ensuremath{\mathcal{H}}\xspace}
\nc{\cI}{\ensuremath{\mathcal{I}}\xspace}
\nc{\cJ}{\ensuremath{\mathcal{J}}\xspace}
\nc{\cK}{\ensuremath{\mathcal{K}}\xspace}
\nc{\cL}{\ensuremath{\mathcal{L}}\xspace}
\nc{\cM}{\ensuremath{\mathcal{M}}\xspace}
\nc{\cN}{\ensuremath{\mathcal{N}}\xspace}
\nc{\cO}{\ensuremath{\mathcal{O}}\xspace}
\nc{\cP}{\ensuremath{\mathcal{P}}\xspace}
\nc{\cQ}{\ensuremath{\mathcal{Q}}\xspace}
\nc{\cR}{\ensuremath{\mathcal{R}}\xspace}
\nc{\cS}{\ensuremath{\mathcal{S}}\xspace}
\nc{\cT}{\ensuremath{\mathcal{T}}\xspace}
\nc{\cU}{\ensuremath{\mathcal{U}}\xspace}
\nc{\cV}{\ensuremath{\mathcal{V}}\xspace}
\nc{\cW}{\ensuremath{\mathcal{W}}\xspace}
\nc{\cX}{\ensuremath{\mathcal{X}}\xspace}
\nc{\cY}{\ensuremath{\mathcal{Y}}\xspace}
\nc{\cZ}{\ensuremath{\mathcal{Z}}\xspace}
\nc{\sA}{\ensuremath{\mathscr{A}}\xspace}
\nc{\sB}{\ensuremath{\mathscr{B}}\xspace}
\nc{\sC}{\ensuremath{\mathscr{C}}\xspace}
\nc{\sD}{\ensuremath{\mathscr{D}}\xspace}
\nc{\sE}{\ensuremath{\mathscr{E}}\xspace}
\nc{\sF}{\ensuremath{\mathscr{F}}\xspace}
\nc{\sG}{\ensuremath{\mathscr{G}}\xspace}
\nc{\sH}{\ensuremath{\mathscr{H}}\xspace}
\nc{\sI}{\ensuremath{\mathscr{I}}\xspace}
\nc{\sJ}{\ensuremath{\mathscr{J}}\xspace}
\nc{\sK}{\ensuremath{\mathscr{K}}\xspace}
\nc{\sL}{\ensuremath{\mathscr{L}}\xspace}
\nc{\sM}{\ensuremath{\mathscr{M}}\xspace}
\nc{\sN}{\ensuremath{\mathscr{N}}\xspace}
\nc{\sO}{\ensuremath{\mathscr{O}}\xspace}
\nc{\sP}{\ensuremath{\mathscr{P}}\xspace}
\nc{\sQ}{\ensuremath{\mathscr{Q}}\xspace}
\nc{\sR}{\ensuremath{\mathscr{R}}\xspace}
\nc{\sS}{\ensuremath{\mathscr{S}}\xspace}
\nc{\sT}{\ensuremath{\mathscr{T}}\xspace}
\nc{\sU}{\ensuremath{\mathscr{U}}\xspace}
\nc{\sV}{\ensuremath{\mathscr{V}}\xspace}
\nc{\sW}{\ensuremath{\mathscr{W}}\xspace}
\nc{\sX}{\ensuremath{\mathscr{X}}\xspace}
\nc{\sY}{\ensuremath{\mathscr{Y}}\xspace}
\nc{\sZ}{\ensuremath{\mathscr{Z}}\xspace}
\nc{\bA}{\ensuremath{\mathbf{A}}\xspace}
\nc{\bB}{\ensuremath{\mathbf{B}}\xspace}
\nc{\bC}{\ensuremath{\mathbf{C}}\xspace}
\nc{\bD}{\ensuremath{\mathbf{D}}\xspace}
\nc{\bE}{\ensuremath{\mathbf{E}}\xspace}
\nc{\bF}{\ensuremath{\mathbf{F}}\xspace}
\nc{\bG}{\ensuremath{\mathbf{G}}\xspace}
\nc{\bH}{\ensuremath{\mathbf{H}}\xspace}
\nc{\bI}{\ensuremath{\mathbf{I}}\xspace}
\nc{\bJ}{\ensuremath{\mathbf{J}}\xspace}
\nc{\bK}{\ensuremath{\mathbf{K}}\xspace}
\nc{\bL}{\ensuremath{\mathbf{L}}\xspace}
\nc{\bM}{\ensuremath{\mathbf{M}}\xspace}
\nc{\bN}{\ensuremath{\mathbf{N}}\xspace}
\nc{\bO}{\ensuremath{\mathbf{O}}\xspace}
\nc{\bP}{\ensuremath{\mathbf{P}}\xspace}
\nc{\bQ}{\ensuremath{\mathbf{Q}}\xspace}
\nc{\bR}{\ensuremath{\mathbf{R}}\xspace}
\nc{\bS}{\ensuremath{\mathbf{S}}\xspace}
\nc{\bT}{\ensuremath{\mathbf{T}}\xspace}
\nc{\bU}{\ensuremath{\mathbf{U}}\xspace}
\nc{\bV}{\ensuremath{\mathbf{V}}\xspace}
\nc{\bW}{\ensuremath{\mathbf{W}}\xspace}
\nc{\bX}{\ensuremath{\mathbf{X}}\xspace}
\nc{\bY}{\ensuremath{\mathbf{Y}}\xspace}
\nc{\bZ}{\ensuremath{\mathbf{Z}}\xspace}
\nc{\bbA}{\ensuremath{\mathbb{A}}\xspace}
\nc{\bbB}{\ensuremath{\mathbb{B}}\xspace}
\nc{\bbC}{\ensuremath{\mathbb{C}}\xspace}
\nc{\bbD}{\ensuremath{\mathbb{D}}\xspace}
\nc{\bbE}{\ensuremath{\mathbb{E}}\xspace}
\nc{\bbF}{\ensuremath{\mathbb{F}}\xspace}
\nc{\bbG}{\ensuremath{\mathbb{G}}\xspace}
\nc{\bbH}{\ensuremath{\mathbb{H}}\xspace}
\nc{\bbI}{\ensuremath{\mathbb{I}}\xspace}
\nc{\bbJ}{\ensuremath{\mathbb{J}}\xspace}
\nc{\bbK}{\ensuremath{\mathbb{K}}\xspace}
\nc{\bbL}{\ensuremath{\mathbb{L}}\xspace}
\nc{\bbM}{\ensuremath{\mathbb{M}}\xspace}
\nc{\bbN}{\ensuremath{\mathbb{N}}\xspace}
\nc{\bbO}{\ensuremath{\mathbb{O}}\xspace}
\nc{\bbP}{\ensuremath{\mathbb{P}}\xspace}
\nc{\bbQ}{\ensuremath{\mathbb{Q}}\xspace}
\nc{\bbR}{\ensuremath{\mathbb{R}}\xspace}
\nc{\bbS}{\ensuremath{\mathbb{S}}\xspace}
\nc{\bbT}{\ensuremath{\mathbb{T}}\xspace}
\nc{\bbU}{\ensuremath{\mathbb{U}}\xspace}
\nc{\bbV}{\ensuremath{\mathbb{V}}\xspace}
\nc{\bbW}{\ensuremath{\mathbb{W}}\xspace}
\nc{\bbX}{\ensuremath{\mathbb{X}}\xspace}
\nc{\bbY}{\ensuremath{\mathbb{Y}}\xspace}
\nc{\bbZ}{\ensuremath{\mathbb{Z}}\xspace}
\nc{\mrm}[1]{\ensuremath{\mathrm{#1}}\xspace}
\nc{\mfr}[1]{\ensuremath{\mathfrak{#1}}\xspace}
\nc{\mit}[1]{\ensuremath{\mathit{#1}}\xspace}
\nc{\mbf}[1]{\ensuremath{\mathbf{#1}}\xspace}
\nc{\mcal}[1]{\ensuremath{\mathcal{#1}}\xspace}
\nc{\msc}[1]{\ensuremath{\mathscr{#1}}\xspace}
\nc{\sub}{\subseteq}
\nc{\too}{\longrightarrow}
\nc{\hook}{\hookrightarrow}
\nc{\hooklongrightarrow}{\lhook\joinrel\longrightarrow}
\nc{\hooklong}{\hooklongrightarrow}
\nc{\hooklongleftarrow}{\longleftarrow\joinrel\rhook}
\nc{\twoheadlongrightarrow}{\relbar\joinrel\twoheadrightarrow}
\nc{\longrightleftarrows}{\ \raisebox{0.3ex}{\(\mathrel{\substack{\xrightarrow{\rule{1em}{0em}} \\[-1ex] \xleftarrow{\rule{1em}{0em}}}}\)}\ }
\renc{\ge}{\geqslant}
\renc{\le}{\leqslant}
\nc{\id}{\mathrm{id}}
\DeclareMathOperator{\Ker}{\on{Ker}}
\DeclareMathOperator{\rk}{\mathrm{rk}}
\DeclareMathOperator{\Hom}{\on{Hom}}
\nc{\uHom}{\underline{\smash{\Hom}}}
\DeclareMathOperator{\Maps}{\on{Maps}}
\DeclareMathOperator{\Aut}{\on{Aut}}
\DeclareMathOperator{\End}{\on{End}}
\nc{\uEnd}{\underline{\smash{\End}}}
\nc{\colim}{\varinjlim}
\renc{\lim}{\varprojlim}
\nc{\Cofib}{\on{Cofib}}
\nc{\Fib}{\on{Fib}}
\nc{\initial}{\varnothing}
\nc{\op}{\mathrm{op}}
\DeclareMathOperator*{\fibprod}{\times}
\renc{\setminus}{\smallsetminus}
\DeclarePairedDelimiter\abs{\lvert}{\rvert}%
\newcommand{\thmref}[1]{Theorem~\ref{#1}}
\newcommand{\secref}[1]{Sect.~\ref{#1}}
\newcommand{\ssecref}[1]{Subsect. ~\ref{#1}}
\newcommand{\lemref}[1]{Lemma~\ref{#1}}
\newcommand{\propref}[1]{Proposition~\ref{#1}}
\newcommand{\corref}[1]{Corollary~\ref{#1}}
\newcommand{\remref}[1]{Remark~\ref{#1}}
\newcommand{\defnref}[1]{Definition~\ref{#1}}
\renewcommand{\eqref}[1]{(\ref{#1})}
\newcommand{\constrref}[1]{Construction~\ref{#1}}
\newcommand{\examref}[1]{Example~\ref{#1}}
\newcommand{\notatref}[1]{Notation~\ref{#1}}
\newcommand{\itemref}[1]{\ref{#1}}
\nc{\A}{\bA}
\renc{\P}{\bP}
\nc{\Spec}{\on{Spec}}
\nc{\D}{\on{\mbf{D}}}
\nc{\SH}{\on{\mbf{SH}}}
\nc{\DM}{\on{\mbf{DM}}}
\nc{\MGLmod}{\on{\mbf{D}_\MGL}}
\nc{\MGLmodQ}{\on{\mbf{D}_{\MGL,\Q}}}
\nc{\cat}{\mrm{1}}
\nc{\AHR}{\mrm{AHR}}
\nc{\geom}{\mrm{T}}
\nc{\Dqc}{\on{\mbf{D}}_{\mrm{qc}}}
\nc{\bDelta}{\mathbf{\Delta}}
\nc{\Cech}{\textnormal{\v{C}}}
\nc{\Dperf}{\on{\mbf{D}}_{\mrm{perf}}}
\nc{\Coh}{\on{Coh}}
\nc{\Qcoh}{\on{Qcoh}}
\nc{\Dcoh}{\on{\mbf{D}}_{\mrm{coh}}}
\nc{\uCoh}{\underline{\smash{\Coh}}}
\nc{\Higgs}{\underline{\smash{\on{Higgs}}}}
\nc{\cl}{{\mrm{cl}}}
\nc{\Bl}{\on{Bl}}
\nc{\vir}{\mrm{vir}}
\nc{\CH}{\on{CH}}
\nc{\HC}{\on{A}} 
\nc{\et}{\mrm{\acute{e}t}}
\renc{\H}{\on{H}}
\nc{\BM}{\mrm{BM}}
\nc{\Z}{\bZ}
\nc{\Q}{\bQ}
\nc{\K}{{\on{K}}}
\nc{\KGL}{\mrm{KGL}}
\nc{\MGL}{\mrm{MGL}}
\nc{\KB}{\K^{\mrm{B}}}
\nc{\G}{{\on{G}}}
\nc{\KH}{\mrm{KH}}
\nc{\Ket}{\K^{\et}}
\nc{\KHet}{\KH^{\et}}
\nc{\Get}{\G^{\et}}
\nc{\Einfty}{{\sE_\infty}}
\renc{\sp}{\mrm{sp}}
\nc{\Td}{\on{Td}}
\nc{\ch}{\on{ch}}
\nc{\RGamma}{R\Gamma}
\nc{\red}{\mrm{red}}
\nc{\der}{{\mrm{der}}}
\nc{\Mod}{{\mrm{Mod}}}
\nc{\Gr}{{\on{Gr}}}
\nc{\Ind}{\on{Ind}}
\nc{\form}{\widehat}
\nc{\R}{\bR}
\renc{\L}{\bL}
\nc{\otimesL}{\mathchoice{\overset{\bL}{\otimes}}{\otimes^\bL}{\otimes^\bL}{\otimes^\bL}}
\nc{\fibprodR}{\fibprod^R}
\nc{\uRHom}{\bR\uHom}
\nc{\GL}{\mrm{GL}}
\nc{\SL}{\mrm{SL}}
\nc{\SW}{\on{SW}}
\nc{\Vect}{\on{Vect}}
\nc{\Fun}{\on{Fun}}
\nc{\vb}[1]{\langle #1\rangle}
\nc{\loc}{\mrm{loc}}
\nc{\fix}{\mrm{fix}}
\nc{\mov}{\mrm{mov}}
\nc{\cms}{\mrm{cms}}
\nc{\V}{\bV}
\nc{\Gm}{{\bG_m}}
\nc{\pt}{\mrm{pt}}
\nc{\mot}{\mrm{mot}}
\nc{\an}{\mrm{an}}
\nc{\St}{\mrm{St}}
\nc{\pr}{\mrm{pr}}
\nc{\C}{\on{C}}
\nc{\Chom}{\mrm{C}_\bullet}
\nc{\Chomhat}{\widehat{\mrm{C}}_\bullet}
\nc{\Ccoh}{\mrm{C}^\bullet}
\nc{\Ccohhat}{\widehat{\mrm{C}}^\bullet}
\nc{\CBM}{\mrm{C}^{\BM}_\bullet}
\nc{\uAut}{\underline{\Aut}}
\nc{\per}{\vb{\ast}} 
\nc{\Lis}{\mrm{Lis}}
\nc{\aff}{{\mrm{aff}}}
\nc{\dR}{{\mrm{dR}}}
\nc{\Pic}{{\on{Pic}}}
\nc{\uGrp}{\underline{\smash{\mrm{Grp}}}}
\nc{\uPerf}{\underline{\smash{\mrm{Perf}}}}
\nc{\uPic}{\underline{\smash{\Pic}}}
\nc{\uMap}{\underline{\smash{\mrm{Map}}}}
\nc{\uDiv}{\underline{\smash{\mrm{Div}}}}
\nc{\uPair}{\underline{\smash{\mrm{Pair}}}}
\nc{\dash}{\textnormal{-}}
\nc{\nilp}{\mrm{nilp}}
\nc{\Rep}{\on{R}}
\nc{\dashmod}{\dash\mbf{mod}}
\nc{\bLambda}{\mbf{\Lambda}}
\renc{\max}{\mrm{max}}
\nc{\un}{\mbf{1}}
\nc{\Stk}{\mrm{Stk}}
\nc{\dStk}{\mrm{dStk}}
\nc{\qDM}{\mrm{qDM}}
\nc{\scr}{\term{derived commutative ring}}
\nc{\scrs}{\term{derived commutative rings}}
\nc{\inftyCat}{\term{$\infty$-category}}
\nc{\inftyCats}{\term{$\infty$-categories}}
\nc{\inftyGrpd}{\term{$\infty$-groupoid}}
\nc{\inftyGrpds}{\term{$\infty$-groupoids}}
\nc{\dA}{\term{derived Artin}}
\title{Virtual localization revisited\vspace{-2mm}}
\author[D. Aranha]{Dhyan Aranha}
\author[A.\,A. Khan]{Adeel A. Khan}
\author[A. Latyntsev]{Alexei Latyntsev}
\author[H. Park]{Hyeonjun Park}
\author[C. Ravi]{Charanya Ravi}
\date{2024-06-13}
\def\l@subsection{\@tocline{2}{0pt}{4pc}{6pc}{}}
\def\l@subsubsection{\@tocline{3}{0pt}{8pc}{8pc}{}}
\begin{document}

\begin{abstract}
  Let $T$ be a split torus acting on an algebraic scheme $X$ with fixed locus $Z$.
  Edidin and Graham showed that on localized $T$-equivariant Chow groups, (a) push-forward $i_*$ along $i : Z \to X$ is an isomorphism, and (b) when $X$ is smooth the inverse $(i_*)^{-1}$ can be described via Gysin pullback $i^!$ and cap product with $e(N)^{-1}$, the inverse of the Euler class of the normal bundle $N$.
  In this paper we show that (b) still holds when $X$ is a quasi-smooth derived scheme (or Deligne--Mumford stack), using virtual versions of the operations $i^!$ and $(-)\cap e(N)^{-1}$.
  As a corollary we prove the virtual localization formula $[X]^\vir = i_* ([Z]^\vir \cap e(N^\vir)^{-1})$ of Graber--Pandharipande without global resolution hypotheses and over arbitrary base fields.
  We include an appendix on fixed loci of group actions on (derived) stacks which should be of independent interest.
  \vspace{-5mm}
\end{abstract}

\maketitle

\renewcommand\contentsname{\vspace{-1cm}}
\tableofcontents

\setlength{\parindent}{0em}
\parskip 0.75em

\thispagestyle{empty}


\section*{Introduction}

\subsection{Localization in equivariant intersection theory}

Let $X$ be a scheme\footnote{%
    In the introduction, all (derived) schemes are algebraic, i.e., of finite type over the base field $k$.
}.
Suppose $T$ is a split torus acting on $X$, and denote by $i : X^T \hook X$ the inclusion of the fixed point scheme \cite{Iversen}.
Let $\CH^T_*(X)$ denote the $T$-equivariant Chow groups\footnotemark~of Edidin--Graham \cite{EdidinGraham}, and $\CH^T_*(X)_\loc$ the localization at the set of first Chern classes of nontrivial $1$-dimensional representations of $T$.
Localization theory for the equivariant Chow groups, following Edidin--Graham \cite{EdidinGrahamLocalization}, consists of two salient statements.
\footnotetext{Throughout the paper, all Chow groups are taken with rational coefficients.}
The first asserts:

\begin{thm}[Concentration]\label{thm:eg1}
  Push-forward along $i : X^T \hook X$ induces an isomorphism
  \begin{equation*}
    i_* : \CH^T_*(X^T)_\loc \to \CH^T_*(X)_\loc.
  \end{equation*}
\end{thm}

The second statement concerns the description of the inverse of $i_*$ when $X$ is smooth.
In this case the fixed locus $X^T$ is also smooth\footnotemark, so $i : X^T \hook X$ is an lci closed immersion with normal bundle $N$.
\footnotetext{see \cite[Thm.~1.2]{Iversen} or \corref{cor:roasting}}
We thus have the operations of Gysin pull-back
\begin{equation}\label{eq:eggys}
  i^! : \CH^T_*(X) \to \CH^T_{*}(X^T)
\end{equation}
and cap product with the Euler class (= top Chern class) of $N$:
\begin{equation}\label{eq:ege}
  (-) \cap e(N) : \CH^T_*(X^T) \to \CH^T_{*}(X^T)
\end{equation}
Moreover, because $N$ has no fixed part (as it is the \emph{moving} part of the tangent bundle of $X$, restricted to $X^T$), one can show that \eqref{eq:ege} is invertible\footnotemark~on $\CH^T_*(X^T)_\loc$.
\footnotetext{see \cite[Prop.~4]{EdidinGrahamLocalization} or \propref{prop:vect} and \corref{cor:anisogamy}}
From the self-intersection formula
\begin{equation*}
  i^!i_*(-) = (-) \cap e(N)
\end{equation*}
we thus deduce:

\begin{cor}[Localization formula]\label{cor:eg2}
  If $X$ is smooth, we have the equality
  \begin{equation*}
    (i_*)^{-1} = i^!(-) \cap e(N)^{-1}
  \end{equation*}
  of maps $\CH^T_*(X)_\loc \to \CH^T_*(X^T)_\loc$.
  In particular, we have
  \begin{equation*}
    \alpha = i_*(i^!(\alpha) \cap e(N)^{-1}) \in \CH^T_*(X)_\loc
  \end{equation*}
  for every cycle class $\alpha \in \CH^T_*(X)$.
\end{cor}

\subsection{Virtual localization}

In this paper we will generalize the above two statements to the ``virtual'' context.
This means that $X$ will be cut out (locally) in an ambient smooth scheme $A$ by $n$ equations (or a section of a rank $n$ vector bundle $E$).
Thus it may be singular or of dimension greater than the virtual dimension
\begin{equation*}
  \mrm{vd} := \dim(A) - n = \rk(T_A) - \rk(E).
\end{equation*}
This is typical of ``obstructed'' moduli spaces, such as those arising in enumerative geometry, e.g. Kontsevich's moduli space of stable maps to a smooth projective variety $Y$ \cite{Kontsevich} which is used to define the Gromov--Witten invariants of $Y$.

Following M.~Kontsevich's insight in \cite{Kontsevich}, the notion of \emph{quasi-smoothness} in derived algebraic geometry provides a natural language in which to study virtual phenomena.
A quasi-smooth derived scheme $\mbf{X}$ may be thought of as a (possibly singular) classical scheme $X = \mbf{X}_\cl$ equipped with certain derived nilpotent data.
Intrinsically associated with $\mbf{X}$ is its \emph{virtual} tangent bundle, a perfect complex of amplitude $[0,1]$\footnote{%
  We use cohomological grading throughout the paper.
}.
In the local model, the term in degree $0$ is the tangent bundle of the ambient scheme $A$ and the term in degree $1$ is the obstruction bundle $E$.
In other words, $\bf{X}$ knows ``why'' $X$ is virtually smooth, and the virtual dimension is encoded as the Euler characteristic of its virtual tangent bundle.

We will prove the following virtual generalization of the equivariant localization theorem:

\begin{thmX}\label{thm:intro/virloc}
  Let $X$ be a quasi-smooth derived algebraic scheme with an action of a split torus $T$.
  Let $X^{hT}$ denote the homotopy fixed point scheme.
  Then push-forward along the canonical morphism $i : X^{hT} \to X$ induces an isomorphism
  \begin{equation*}
    i_* : \CH^T_*(X^{hT})_\loc \to \CH^T_*(X)_\loc
  \end{equation*}
  with inverse given by
  \begin{equation*}
    (i_*)^{-1} = i^!_T(-) \cap e_T(N^\vir)^{-1}.
  \end{equation*}
\end{thmX}

This will imply the following formula, computing the virtual fundamental class $[X]^\vir$ in terms of $[X^{hT}]^\vir$:

\begin{corX}\label{cor:intro/gp}
  Let $X$ be a quasi-smooth derived algebraic scheme with an action of a split torus $T$.
  Then we have
  \begin{equation*}
    [X]^\vir = i_* ([X^{hT}]^\vir \cap e_T(N^\vir)^{-1})
  \end{equation*}
  in $\CH^T_*(X)_\loc$.
\end{corX}

We now explain the various notation appearing in these statements.

\sssec*{Homotopy fixed points}

Recall that if $X$ is a $k$-scheme with $T$-action, the fixed point scheme $X^T$ can be described in terms of functors of points as follows.
For each $k$-algebra\footnotemark~$R$, we have an action of the group $T(R)$ on the set $X(R)$.
\footnotetext{All rings and algebras are implicitly commutative.}
An $R$-valued fixed point of $X$ is a point $x \in X(R)$ such that $t \cdot x = x$ in $X(R')$ for every $t \in T(R')$ and $R$-algebra $R'$.
The fixed point scheme $X^T$ represents the functor sending $R$ to the set of $R$-valued fixed points of $X$ (see \cite[\S 2]{Fogarty}).

If $X$ is a derived $k$-scheme, its functor of points $R \mapsto X(R)$ sends a \emph{derived} $k$-algebra\footnote{%
  The technical term is \emph{animated} commutative $k$-algebras.
  These are equivalently homotopy types of simplicial commutative $k$-algebras, or if $k$ is of characteristic zero, quasi-isomorphism types of connective commutative dg-$k$-algebras.
} $R$ to an \emph{\inftyGrpd} of $R$-points $X(R)$.
A $T$-action on $X$ amounts to $T(R)$-actions on $X(R)$ for each $R$, compatible up to coherent homotopy.
To define the correct replacement of $X^T$ in this setting, we take inspiration from homotopy theory (see \cite{ThomasonHomotopy}).
Roughly speaking, we say an $R$-valued \emph{homotopy fixed point} of $X$ is a point $x \in X(R)$, a ``fixing'' isomorphism $t \cdot x \simeq x$ in $X(R')$ for every $t \in T(R')$ and derived $R$-algebra $R'$, and a homotopy coherent system of compatibilities between the fixing isomorphisms as $t$ varies.
We will see that the functor of homotopy fixed points is represented by a derived scheme $X^{hT}$.
We refer to \ssecref{ssec:hofix} for the precise definition.

\sssec*{Push-forward}

When $R$ is an ordinary $k$-algebra, we have $X^{hT}(R) = X^T(R)$.
Thus the classical truncation of $X^{hT}$ is the fixed point scheme of the classical truncation $X_\cl$ (with respect to the induced $T$-action).
As Chow groups are insensitive to derived nilpotents, we have $\CH_*^T(X) \simeq \CH_*^T(X_\cl)$ and $\CH_*^T(X^{hT}) \simeq \CH_*^T((X_\cl)^T)$, and push-forward along the forgetful morphism $i : X^{hT} \to X$ is identified with push-forward along the inclusion $(X_\cl)^T \hook X_\cl$.
In particular, the invertibility of
\[ i_* : \CH^T_*(X^{hT})_\loc \to \CH^T_*(X)_\loc \]
follows from the classical concentration theorem for $X_\cl$ (\thmref{thm:eg1}).

\sssec*{Virtual fundamental class}

When $X$ is a smooth classical scheme, we have $X^{hT} = X^T$ (\propref{prop:ksno1o1}).
When $X$ is quasi-smooth, $X^{hT}$ is also quasi-smooth (\corref{cor:roasting}).
In particular, there are virtual fundamental classes $[X]^\vir$ and $[X^{hT}]^\vir$, respectively (see \cite{BehrendFantechi} or \cite[Constr.~3.5]{KhanVirtual}).

\sssec*{Virtual Gysin pull-back and Euler class}

The virtual fundamental classes $[X]^\vir$ and $[X^{hT}]^\vir$ can be regarded as the images of $[\pt] \in \CH^T_*(\pt)$ by virtual Gysin pull-back maps
\begin{equation*}
  p^! : \CH^T_*(\pt) \to \CH^T_*(X),
  \qquad q^! : \CH^T_*(\pt) \to \CH^T_*(X^{hT})
\end{equation*}
along the projections of $p : X \to \pt$ and $q : X^{hT} \to \pt$ (see \cite{Manolache} or \cite[Constr.~3.3]{KhanVirtual}).
If $i : X^{hT} \to X$ were quasi-smooth, we would similarly have a Gysin pull-back (compare \eqref{eq:eggys})
\begin{equation*}
  i^! : \CH^T_*(X) \to \CH^T_{*}(X^{hT})
\end{equation*}
such that $i^! \circ p^! = q^!$.
However, the essential difficulty in the virtual generalization of the localization theorem is that \emph{$i$ is rarely quasi-smooth when $X$ is not smooth}.

Similarly, we have no Euler class $e(N^\vir)$ as in \eqref{eq:ege} either, as the normal bundle $N^\vir$ (= the moving part of the virtual tangent bundle of $X$) is typically a \emph{perfect complex of amplitude $[0,1]$} rather than a vector bundle.

It turns out that $i^!$ and $e(N^\vir)$ do exist once we pass to localized Chow groups.
The following is the key technical result of the paper:

\begin{thmX}\label{thm:intro/gyseul}
  In the situation of \thmref{thm:intro/virloc}, we have:
  \begin{thmlist}
    \item\label{item:gyseul/gys}
    There exists a canonical Gysin pull-back map\footnote{%
      The subscript in $i^!_T$ is just a reminder that it only exists on the localizations.
      In case $i$ is actually quasi-smooth, then $i^!_T = i^!$.
    }
    \begin{equation*}
      i^!_T : \CH^T_*(X)_\loc \to \CH^T_{*}(X^{hT})_\loc
    \end{equation*}
    satisfying the functoriality formula
    \begin{equation*}
      i^!_T \circ p^! = q^! : \CH^T_*(\pt)_\loc \to \CH^T_{*}(X^{hT})_\loc.
    \end{equation*}
    
    \item\label{item:gyseul/eul}
    There exists a canonical Euler class operator
    \begin{equation*}
      (-) \cap e_T(N^\vir) : \CH^T_*(X^{hT})_\loc \to \CH^T_{*}(X^{hT})_\loc
    \end{equation*}
    which is invertible and satisfies the self-intersection formula
    \begin{equation*}
      i^!_T i_*(-) = (-) \cap e_T(N^\vir).
    \end{equation*}
  \end{thmlist}
\end{thmX}

Once \thmref{thm:intro/gyseul} is proven, Theorems~\ref{thm:intro/virloc} and \ref{cor:intro/gp} can be easily derived just as in the smooth case.
Note that $i^!_T$ and $e_T(N^\vir)$ are instances of more general constructions of virtual Gysin maps and Euler classes in $\CH^T_*(-)$; see Sects.~\ref{sec:gys} and \ref{sec:eul}, respectively.

\subsection{Relation with Graber--Pandharipande}
\label{ssec:gp}

In the form of \corref{cor:intro/gp}, the virtual localization theorem was first proven by Graber and Pandharipande \cite{GraberPandharipande}, when $k$ is the field of complex numbers and under the following technical assumptions:
\begin{enumerate}
  \item The scheme $X_\cl$ admits a $T$-equivariant closed immersion into an ambient smooth scheme,
  \item The virtual tangent bundle of $X$ admits a global resolution by $T$-equivariant vector bundles.
\end{enumerate}
The first assumption was later removed by H.-L.~Chang, Y.-H.~Kiem and J.~Li \cite{ChangKiemLi}.
Moreover, they relaxed the second assumption to the existence of a global resolution for the virtual normal bundle $N^\vir$ (= the moving part of the virtual tangent bundle).

When $N^{\vir}$ admits a global resolution $[N_0 \to N_1]$, we have $e_T(N^\vir) = e(N_0) \cup e(N_1)^{-1}$ (see \propref{prop:petal}).
This was taken as the \emph{definition} by \cite{GraberPandharipande} and \cite{ChangKiemLi}.
Although it was widely expected that \corref{cor:intro/gp} holds without the global resolution hypothesis on $N^\vir$, see e.g. \cite[Thm.~2.19, Rmk.~2.20]{JoyceEnumerative}, no general definition of $e_T(N^\vir)$ has appeared in the literature before as far as we know.
That is, \thmref{thm:intro/gyseul} is a prerequisite to even formulating the statements of \thmref{thm:intro/virloc} or \corref{cor:intro/gp}.
Even when a global resolution does exist, this is still a conceptual improvement on the literature in that the formulation is entirely intrinsic to the derived scheme $X$ (with the given $T$-action).

The global resolution hypothesis does hold for most concrete applications in enumerative geometry, though construction of explicit resolutions can often be annoying.
In some examples, such as the moduli space of semi-stable objects in a given abelian category, it is not clear whether a global resolution exists at all.
We refer to the recent work of Joyce \cite{JoyceEnumerative} for concrete examples.
In particular, Joyce's results make use of the generality of \corref{cor:intro/gp} (see Remark~2.20 of \emph{op. cit.}).

Recently, virtual fundamental classes have made appearances in the context of arithmetic geometry (see e.g. \cite{Madapusi,FengYunZhang}), as well as non-archimedean analytic geometry (see \cite{PortaYuGW}).
Whereas all previous work on virtual localization is restricted to the field of complex numbers, our approach is robust enough to apply over bases of arbitrary and even mixed characteristic, and apparently can also be adapted to derived analytic geometry as in \cite{PortaYuAnalytic}, where global resolutions are rare.

\subsection{Extension to stacks}

Our virtual localization theorem holds more generally for Deligne--Mumford stacks.\footnotemark
\footnotetext{In the introduction, all stacks are assumed of finite type over the base field $k$.}
Let us explain the subtleties that arise in this generality.

Consider first the non-virtual case, where $X$ is a \emph{smooth} Deligne--Mumford stack with $T$-action.
There is still a smooth closed substack $X^{hT} \sub X$ of homotopy fixed points.
However, the concentration theorem (\thmref{thm:eg1}) need not hold for $X^{hT}$.
For example, if $X = B\mu_n$ is the classifying stack of the group scheme of $n$th roots of unity, with the scaling action of $T=\bG_m$, then $X^{hT}$ is empty whereas $\CH^{\bG_m}_*(B\mu_n)_\loc$ is nontrivial.
In this case, we can solve the discrepancy by reparametrizing the action by the $n$-fold cover $T' = \bG_m \twoheadrightarrow \bG_m$, so that $X^{hT'} = X$.

When $T=\bG_m$ is rank one, Kresch showed that we can always find a reparametrization $T' = \bG_m \twoheadrightarrow \bG_m$ such that concentration holds for $X^{rhT} := X^{hT'}$.\footnotemark
\footnotetext{See \cite[Thm.~5.3.5]{Kresch}.
Note that Kresch assumed that the base field $k$ is algebraically closed.
Alper--Hall--Rydh showed how to construct the reparametrization over an arbitrary base field $k$, see \cite[Def.~5.25]{AlperHallRydhLuna}.
They also showed that $X^{rhT}$ is intrinisic to $X$ with its $T$-action, i.e., does not depend on the choice of reparametrization.
Finally, the first construction of $X^{rhT}$ when $X$ is \textit{singular} was given by Alper--Hall--Rydh; Kresch only constructed the reduced substack in general.}
Since $X$ and $X^{rhT}$ are smooth, we still have the self-intersection formula for $i : X^{rhT} \to X$ and one deduces a localization formula as in \corref{cor:eg2}, see \cite[Cor.~5.3.6]{Kresch}.

We will extend this to actions of higher rank split tori $T$, and to the virtual context.
We will show:

\begin{thmX}\label{thm:intro/virlocdm}
  Let $X$ be a quasi-smooth derived Deligne--Mumford stack with an action of a split torus $T$.
  Let $X^{rhT}$ denote the reparametrized homotopy fixed point stack (see \defnref{defn:XrhT}).
  Then push-forward along $i : X^{rhT} \to X$ induces an isomorphism
  \begin{equation*}
    i_* : \CH^T_*(X^{rhT})_\loc \to \CH^T_*(X)_\loc
  \end{equation*}
  with inverse given by
  \begin{equation*}
    (i_*)^{-1} = i^!_T(-) \cap e_T(N^\vir)^{-1}.
  \end{equation*}
  In particular, we have
  \begin{equation*}
    [X]^\vir = i_* ([X^{rhT}]^\vir \cap e_T(N^\vir)^{-1})
  \end{equation*}
  in $\CH^T_*(X)_\loc$.
\end{thmX}

The reparametrization $T' \twoheadrightarrow T$ is constructed such that $X^{rhT} = X^{hT'}$ contains all points of $X$ with $r$-dimensional $T$-stabilizer; see \ssecref{ssec:stab} for the notion of $T$-stabilizers.
The concentration statement then follows from the general concentration criterion of \cite[Thm.~B]{constack}.
The explicit formula for $(i_*)^{-1}$ is then derived as in the case of schemes, by proving a self-intersection formula for $i : X^{rhT} \to X$ and checking that $e_T(N^\vir)$ is still invertible in the Deligne--Mumford case.

For $T=\bG_m$, the above formula for $[X]^\vir$ was first proven by Graber--Pandharipande (see \cite[App.~C]{GraberPandharipande}) over the base field $k=\bC$, assuming the existence of a global smooth embedding and global equivariant resolution of the virtual tangent bundle as in \ssecref{ssec:gp}.
These assumptions were relaxed by Chang--Kiem--Li to the existence of a global equivariant resolution of the virtual normal bundle (see \cite[Thm.~3.5]{ChangKiemLi}).
In the smooth or non-virtual case, \thmref{thm:intro/virlocdm} extends \cite[Thm.~5.3.5, Cor.~5.3.6]{Kresch} to actions of higher rank tori and non-algebraically closed base fields.

\subsection{Outline}

In Sections~\ref{sec:eul} and \ref{sec:gys} we construct the Euler and Gysin operations of \thmref{thm:intro/gyseul}, respectively.
A key technical result is an upgraded homotopy invariance property (or Thom isomorphism) acquired by localized equivariant Chow theory (see \thmref{thm:homotopy}).
\thmref{thm:intro/virloc} and \corref{cor:intro/gp} are then derived in \secref{sec:virloc}.
Finally, \secref{sec:wall} contains an application of \corref{cor:intro/gp}: we prove a general form of the simple wall-crossing formula of \cite[\S 2.1, App.~A]{KiemLiWall}.

Appendix~\ref{sec:fixed} contains a detailed treatment of fixed loci of group actions on algebraic stacks, which is required for the proof of virtual localization for Deligne--Mumford stacks.
For completeness, we state many of the definitions and results for Artin stacks and for actions of general group schemes when possible.

In Appendix~\ref{sec:POT} we rewrite our main results in the language of perfect obstruction theories, as in \cite{BehrendFantechi} and \cite{GraberPandharipande}.
Recall that a quasi-smooth derived structure on a stack gives rise in particular to a perfect obstruction theory, a purely algebraic structure on the classical truncation.
While the language of quasi-smooth derived stacks is arguably more natural, in certain applications it may be convenient to apply the virtual localization theorem without needing to know that a given perfect obstruction theory actually comes from a derived structure.

\subsection{Conventions and notation}

We generally follow the conventions of the companion paper \cite{constack}.

\subsubsection{Stacks}

We fix a base commutative ring $k$, assumed noetherian and connected.
We write $\pt := \Spec(k)$.

We denote by $\Stk_k$ (resp. $\dStk_k$) the \inftyCat of (resp. derived) Artin stacks locally of finite type over $k$ with quasi-compact and separated diagonal.
Given a derived Artin stack $S \in \dStk_k$, we denote by $\Stk_S$ (resp. $\dStk_S$) for the \inftyCat of locally of finite type (resp. derived) Artin stacks over $S$ with quasi-compact and separated diagonal.
See \cite[\S 1]{constack} for our conventions on points and stabilizers of stacks.

We say that a derived $1$-Artin stack is \emph{quasi-Deligne--Mumford} if it has quasi-finite diagonal, or equivalently finite stabilizers.

\subsubsection{Chow groups and motivic Borel--Moore chains}

For every derived Artin stack $X \in \dStk_k$, we consider the complex
\[
  \CBM(X; \Q^\mot)\per
\]
of (periodized) motivic Borel--Moore chains, regarded as an object of the derived \inftyCat of $\bQ$-linear chain complexes.
By cap product, this is a module over the complex of motivic cochains $\Ccoh(X; \Q^\mot)\per$.
See \cite[\S 2]{constack} or \cite{KhanVirtual} for the definitions.

We define the (total) Chow group of $X$ by
\[
  \CH_*(X) := \H_0 \big(\CBM(X; \Q^\mot)\per\big).
\]
For global quotients $X = [U/G]$, $\CH_*(X) \simeq \CH_*(X_\cl)$ is identified with the equivariant Chow group $\CH_*^G(U_\cl) \otimes \Q$ of \cite{EdidinGraham}.
For $X$ Deligne--Mumford, or more generally $1$-Artin with finite inertia, $\CH_*(X)$ is identified with the Chow group of the coarse moduli space.

The complexes $\CBM(X)\per$ admit push-forwards along proper morphisms and (virtual) Gysin pull-backs along quasi-smooth morphisms, and these satisfy the base change formula for homotopy cartesian squares.
For any closed/open decomposition $i : Z \hook X$, $j : X\setminus Z \hook X$, there is a localization exact triangle
\begin{equation*}
  \CBM(Z)\per \xrightarrow{i_*}
  \CBM(X)\per \xrightarrow{j^!}
  \CBM(X\setminus Z)\per.
\end{equation*}
In particular, we have the derived invariance property
\[
  \CBM(X)\per \simeq \CBM({X_\cl})\per
\]
where $X_\cl$ is the classical truncation of $X \in \dStk_k$.

\subsubsection{Localized equivariant chains}

Throughout the paper, $T$ will be a split torus over $k$.
Given a \dA stack with $T$-action, we will denote the quotient stack in script font (e.g. $\sX = [X/T]$, $\sY = [Y/T]$, etc.).
If $T$ acts on $X \in \dStk_k$, we write
\begin{align*}
  \Chom^{\BM,T}(X; \Q^\mot)\per &:= \CBM(\sX; \Q^\mot)\per,\\
  \Ccoh_T(X; \Q^\mot)\per &:= \Ccoh(\sX; \Q^\mot)\per
\end{align*}
for the complexes of $T$-equivariant Borel--Moore chains and cochains, respectively.

We define the $T$-equivariant (total) Chow group of $X$ by
\begin{equation*}
  \CH^T_*(X) := \H_0 \big(\Chom^{\BM,T}(X; \Q^\mot)\per\big)
\end{equation*}
for any $X \in \dStk_k$.
When $X$ is an algebraic space, this is the $T$-equivariant Chow group of \cite{EdidinGraham}.
More generally, if $X$ is Deligne--Mumford or $1$-Artin with finite inertia, then $\CH^T_*(X)$ is identified with Kresch's Chow group of the quotient stack $\sX=[X/T]$ as in \cite{Kresch}.

We consider the localization
\[
  \Ccoh_T(X; \Q^\mot)_\loc := \Ccoh(\sX; \Q^\mot)_\loc := \Ccoh(\sX; \Q^\mot)\per[c_1(\Sigma)^{-1}]
\]
in the sense of higher algebra (see \cite[Prop.~7.2.3]{LurieHA}), where $\Sigma$ is the set of nontrivial $1$-dimensional $T$-representations (regarded as line bundles on $BT$), and similarly
\[
  \Chom^{\BM,T}(X; \Q^\mot)_\loc := \CBM(\sX; \Q^\mot)_\loc := \CBM(\sX; \Q^\mot)\per[c_1(\Sigma)^{-1}].
\]
We have
\begin{equation*}
  \CH^T_*(X)_\loc \simeq \H_0 \big(\Chom^{\BM,T}(X; \Q^\mot)_\loc\big),
\end{equation*}
where the left-hand side is the localization of $\CH^T_*(X)$ at the set of first Chern classes of nontrivial $1$-dimensional $T$-representations.
See \cite[\S 3]{constack} for details.

\subsubsection{Generalized chains}

In the text, we will fix more generally any oriented $\bQ$-linear commutative motivic ring spectrum $\Lambda$ over $k$.
The notation is generalized in the obvious way, e.g. we denote by
\[
  \CBM(X; \Lambda)\per
\]
the complex of (periodized) Borel--Moore chains with coefficients in $\Lambda$, for any $X \in \dStk_k$.
The case of motivic Borel--Moore chains considered above is obtained by taking $\Lambda = \bQ^\mot$ the motivic cohomology spectrum.
When there is no risk of ambiguity, we omit $\Lambda$ from the notation.

In this way, our results also apply to singular Borel--Moore homology or $\ell$-adic Borel--Moore homology, for example.
We can also take $\Lambda = \MGL_\Q$ the algebraic cobordism spectrum.
In that case we write
\[
  \Omega_*(X) := \H_0 \big(\CBM(X; \MGL_\Q)\per\big)
\]
for every $X \in \Stk_k$.
For global quotients $X = [U/G]$ over a field of characteristic zero, $\Omega_*(X)$ is identified with the equivariant algebraic bordism $\Omega_{*}^G(U_\cl) \otimes \Q$ of \cite{HellerMalagonLopez}.

\subsubsection{Relative chains}

It will be convenient in the paper to work with the complexes of
\emph{relative} Borel--Moore chains
\[
  \CBM(X_{/S}; \Lambda)\per
\]
for $X \in \dStk_S$ over a fixed $S \in \dStk_k$.
For $S=\pt := \Spec(k)$, we have $\CBM(X_{/\pt}; \Lambda)\per \simeq \CBM(X; \Lambda)\per$ by definition.
See \cite[\S 2]{constack} or \cite{KhanVirtual} for the definitions.

We also consider the following localized $T$-equivariant variants.
Given a $T$-equivariant morphism $X \to Y$ in $\dStk_k$, we write
\begin{equation*}
  \Chom^{\BM,T}(X_{/Y})_\loc := \Chom^{\BM}(\sX_{/\sY})_\loc := \Chom^{\BM}(\sX_{/\sY}) \otimes_{\Ccoh(BT)} \Ccoh(BT)_\loc.
\end{equation*}
Given $S \in \dStk_k$ and $X \in \dStk_S$ where $T$ acts on $X$ over $S$\footnotemark
\footnotetext{so that $X \to S$ is $T$-equivariant with respect to the trivial action on $S$}, we write
\begin{equation*}
  \CBM(\sX_{/S})_\loc := \CBM(\sX_{/S}) \otimes_{\Ccoh(\sX)} \Ccoh(\sX)_\loc.
\end{equation*}

\subsection{Acknowledgments}

We would like to thank Dominic Joyce and Marc Levine for their interest in this work, Andrew Kresch and Matthieu Romagny for discussions on fixed loci and corrections on a previous version, Harrison Chen for pointing out some relevant literature and comments on a draft, and Felix Janda for asking about \propref{prop:petal}.

We acknowledge support from the ERC grant QUADAG (D.A.), the NSTC grant 110-2115-M-001-016-MY3 (A.A.K.), the DFG through SFB 1085 Higher Invariants (C.R.), and the EPSRC grant no EP/R014604/1 (A.A.K. and C.R.).
We also thank the Isaac Newton Institute for Mathematical Sciences, Cambridge, for hospitality during the programme ``Algebraic K-theory, motivic cohomology and motivic homotopy theory'' where part of the writing was completed.
C.R. is grateful to Max Planck Institute for Mathematics in Bonn for its hospitality and financial support at the time of writing the paper.
This paper is part of a project that has received funding from the European Research Council (ERC) under the European Union's Horizon 2020 research and innovation programme (grant agreement No. 832833).

\section{Euler classes}
\label{sec:eul}

In this section we construct the Euler class operator of \thmref{thm:intro/gyseul}\itemref{item:gyseul/eul}.
See \constrref{constr:euler2} for the construction of the Euler class, \propref{prop:petal} for its compatibility with exact triangles, and \propref{prop:h0ubu01b} for its invertibility when the complex has no fixed part.
The self-intersection formula will be proven in the next section.

\subsection{Fixed and moving parts of complexes}
\label{ssec:rephosphorize}

Let $X$ be a derived stack over $k$.
Let $G$ be a diagonalizable group scheme of finite type over $k$.
Recall the following standard $\infty$-categorical version of \cite[Exp.~I, 4.7.3]{SGA3}:

\begin{prop}\label{prop:jbnso1}
  There is a canonical equivalence of stable \inftyCats
  \[
    \Dqc(X \times BG) \to \prod_\chi \Dqc(X),
  \]
  where the product is taken over characters $\chi : G \to \Gm$.
\end{prop}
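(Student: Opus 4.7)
Since $G$ is diagonalizable of finite type, we have $G = \Spec(k[M])$ where $M$ is the (finitely generated) character group of $G$. The Hopf algebra $\cO(G) = k[M]$ is \emph{group-like}: it has a basis $\{e_\chi\}_{\chi \in M}$ satisfying $\Delta(e_\chi) = e_\chi \otimes e_\chi$ and $\varepsilon(e_\chi) = 1$, and dually each $\chi \in M$ determines a line bundle $\cL_\chi \in \Pic(BG)$ with $\cL_\chi \otimes \cL_{\chi'} \simeq \cL_{\chi\chi'}$. The plan is to lift the classical weight-space decomposition of rational $G$-representations to the $\infty$-categorical setting via these line bundles.

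I would construct the candidate equivalence explicitly as
\[
\Psi : \prod_{\chi} \Dqc(X) \to \Dqc(X \times BG), \qquad (\cF_\chi)_\chi \mapsto \bigoplus_\chi p_X^*\cF_\chi \otimes p_{BG}^*\cL_\chi,
\]
with inverse $\Phi(\cG)_\chi := p_{X,*}(\cG \otimes p_{BG}^*\cL_{\chi^{-1}})$ extracting the $\chi$-isotypic component along the projections $p_X, p_{BG}$ from $X \times BG$; here $p_{X,*}$ takes $G$-invariants along the trivial $G$-action on $X$. To check $\Phi$ and $\Psi$ are mutually inverse, it suffices to treat the case $X = \Spec(k)$ and then tensor with $\Dqc(X)$ over $\Mod_k$: the product $\prod_\chi \Mod_k$ is the functor $\infty$-category $\Fun(M, \Mod_k)$ over the discrete set $M$, so it behaves well under this $\Mod_k$-linear base change.

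For $X = \Spec(k)$, fpqc descent along the atlas $\Spec(k) \to BG$ identifies $\Dqc(BG)$ with the $\infty$-category of $k[M]$-comodules in $\Mod_k$, and a coaction $\rho : V \to V \otimes k[M] \simeq \bigoplus_\chi V$ decomposes into components $\rho_\chi : V \to V$ which, by the coassociativity and counit axioms applied to the group-like basis, form a complete orthogonal system of idempotents; this produces the weight decomposition $V = \bigoplus_\chi V_\chi$ with $V_\chi = \rho_\chi(V)$. The main obstacle is making this idempotent splitting coherent at the $\infty$-categorical level, rather than merely on homotopy categories. The cleanest route is to verify directly that $\{\cL_\chi\}_{\chi \in M}$ is a compact generating set for $\Dqc(BG)$ with orthogonal mapping spaces,
\[
\mathrm{Map}_{\Dqc(BG)}(\cL_\chi, \cL_{\chi'}) \simeq \RGamma(BG, \cL_{\chi'\chi^{-1}}) \simeq \begin{cases} k & \chi = \chi', \\ 0 & \chi \neq \chi', \end{cases}
\]
the vanishing being the fact that a nontrivial character has no $G$-invariants; the Morita-theoretic recognition principle for compactly generated stable presentable $\infty$-categories then yields $\Dqc(BG) \simeq \prod_\chi \Mod_k$, completing the reduction.
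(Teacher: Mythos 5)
Your route is genuinely different from the paper's. The paper does not argue via generators at all: it uses \v{C}ech descent along $X \to X \times BG$ to identify $\Dqc(X\times BG)$ with comodules over the group algebra $\sO_X[M]$ \emph{inside} $\Dqc(X)$, and then invokes the argument of Moulinos to identify such comodules over a discrete abelian group algebra with $\Fun(M,\Dqc(X)) \simeq \prod_\chi \Dqc(X)$; everything happens relative to $X$, so no K\"unneth statement and no compact-generation input is needed, and the argument applies to an arbitrary derived stack $X$ with no hypotheses on $\Dqc(X)$. Your approach (explicit adjoint functors, reduction to the absolute case, then Morita theory for the mutually orthogonal line bundles $\cL_\chi$) buys an explicit inverse pair and the useful byproduct that $\Dqc(BG)$ is compactly generated by $\{\cL_\chi\}$, but it costs two extra inputs that you currently assert rather than prove.

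First, the reduction step $\Dqc(X\times BG)\simeq \Dqc(X)\otimes_{\Mod_k}\Dqc(BG)$ is not formal for an arbitrary derived stack $X$: tensoring with $\Dqc(X)$ need not commute with the cosimplicial limit computing $\Dqc(BG)$, and $\Dqc(X)$ need not be dualizable. The fix is to reverse the logical order: prove the absolute case first, conclude that $\Dqc(BG)\simeq\prod_\chi \Mod_k$ is compactly generated and hence dualizable, and then either write $X$ as a colimit of affines (so $\Dqc(X\times BG)\simeq \lim_A \Dqc(BG_A)$ with each $\Dqc(BG_A)\simeq \Mod_A\otimes_{\Mod_k}\Dqc(BG)$ by dualizability of $\Mod_A$, and the $\otimes\,\Dqc(BG)$ passes through the limit by dualizability of $\Dqc(BG)$), or bypass K\"unneth entirely by checking $\Phi\circ\Psi\simeq\mathrm{id}$ and $\Psi\circ\Phi\simeq\mathrm{id}$ directly with the projection formula and base change. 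Second, in the Morita step the orthogonality computation is the easy half: the actual content is that the $\cL_\chi$ are compact and that they \emph{generate}, and this is exactly where the classical weight decomposition of \cite[Exp.~I, 4.7.3]{SGA3} enters --- you need exactness of the weight projections (linear reductivity of a diagonalizable group over the noetherian base ring $k$, not just a field), conservativity of pullback along the atlas $\Spec(k)\to BG$, and left-completeness of the t-structure to pass from homotopy sheaves to unbounded complexes, so that $\RGamma(BG, \sF\otimes\cL_{\chi^{-1}})=0$ for all $\chi$ forces $\sF=0$. With those two points supplied your argument goes through; as written, they are the gaps.
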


More generally, let $G$ be an fppf group scheme acting trivially on $X$.
Then \v{C}ech descent along the cover $X \twoheadrightarrow [X/G] \simeq X \times BG$ yields a canonical equivalence between $\Dqc(X \times BG)$ and the \inftyCat of quasi-coherent $\cO_{G_X}$-comodules, where $G_X = G \times X$.
Now suppose that $G$ is \emph{diagonalizable}, so that $\cO_{G_X} \simeq \cO_X[M]$ is the group algebra of an abelian group $M$ (= the group of characters of $G$).
In this case one can argue as in the proof of \cite[Prop.~4.2]{Moulinos} to show that the \inftyCat of quasi-coherent $\cO_{G_X}$-comodules is equivalent to the \inftyCat
\[
  \Fun(M, \Dqc(X))
  \simeq \prod_{\chi\in M} \Dqc(X),
\]
where $M$ is regarded as a discrete set.

Given a quasi-coherent complex $\cF \in \Dqc(X \times BG)$, write $\cF^{(\chi)}$ for the $\chi$-eigenspace ($\chi\in M$), so that there are canonical isomorphisms
\begin{align*}
  \bigoplus_\chi \cF^{(\chi)} \to \cF,\\
  \cF \to \prod_\chi \cF^{(\chi)}.
\end{align*}
Indeed, the equivalence $F$ of \propref{prop:jbnso1} admits left and right adjoints $F^L$ and $F^R$, respectively, and these isomorphisms are the counit of $(F^L, F)$ and unit of $(F, F^R)$, respectively.

\begin{defn}\label{defn:uh1pbpbdf}
  The \emph{fixed part} of $\cF \in \Dqc(X \times BG)$ is its weight zero eigenspace and its \emph{moving part} is the direct sum of its nonzero weight eigenspaces.
  We write
  \[
    \cF^{\fix} := \cF^{(0)},
    \quad
    \cF^{\mov} := \bigoplus_{\chi\neq0} \cF^{(\chi)}.
  \]
\end{defn}

\subsection{Euler classes of vector bundles}

We fix a split torus $T$ over $k$.
The following can be regarded as a toy version of the localization theorem:

\begin{prop}\label{prop:vect}
  Let $X$ be a quasi-DM derived stack over $S \in \dStk_k$.
  Let $\cE$ be a locally free sheaf of rank $r$ on $\sX = X \times BT$ with no fixed part, i.e., $\cE^\fix \simeq 0$.
  Then we have:
  \begin{thmlist}
    \item
    The Euler class $e(\cE) \in \Ccoh(\sX)$ is invertible in $\Ccoh(\sX)_\loc$.

    \item
    Let $\pi : \sE = \V(\cE) \to \sX$ be the projection of the total space and $0 : \sX \to \sE$ the zero section.
    Then
    \begin{equation*}
      0_* : \CBM(\sX_{/S})_\loc \to \CBM(\sE_{/S})_\loc
    \end{equation*}
    is invertible, and we have canonical homotopies
    \begin{align*}
      (0_*)^{-1} \simeq 0^!(-) \cap e(\cE)^{-1} &~:~ \CBM(\sE_{/S})_\loc \to \CBM(\sX_{/S})_\loc,\\
      \pi^! \simeq 0_*(- \cap e(\cE)^{-1}) &~:~ \CBM(\sX_{/S})_\loc \to \CBM(\sE_{/S})_\loc.
    \end{align*}
  \end{thmlist}
\end{prop}

We begin with some standard identities.

\begin{lem}\label{lem:atmosphere}
  Let $X \in \dStk_k$ and $\pi : E = \V_X(\cE) \to X$ a vector bundle of rank $r$.
  Denote by $i : Z \to X$ the inclusion of the derived zero locus of any section $s : X \to E$.
  Then there is a canonical homotopy
  \begin{equation*}
    i_*i^!(-) \simeq (-) \cap e(\cE)
  \end{equation*}
  of maps $\CBM(X_{/X}) \to \CBM(X_{/X})\vb{r}$.
\end{lem}
\begin{proof}
  Consider the homotopy cartesian square
  \[\begin{tikzcd}
    Z \ar{r}{i}\ar{d}{i}
    & X \ar{d}{s}
    \\
    X \ar{r}{0}
    & E.
  \end{tikzcd}\]
  By the base change formula we have
  \begin{equation*}
    i_*i^! \simeq s^! 0_*.
  \end{equation*}
  Recall that $\pi^! : \CBM(X) \to \CBM(E)\vb{-r}$ is an isomorphism with inverse $0^!$, where $r=\rk(E)$.
  Since $s^!$ is also a left-sided inverse to $\pi^!$, we deduce that $s^! \simeq 0^!$, hence $i_*i^! \simeq 0^!0_* \simeq (-) \cap e(\cE)$.
\end{proof}

\begin{lem}\label{lem:enough}
  Let $X \in \dStk_k$.
  Let $\pi : E = \V_X(\cE) \to X$ be a vector bundle of rank $r$ with zero section $0 : X \to E$.
  Then we have the following canonical identities:
  \begin{align*}
    0_{*} 0^! &\simeq (-) \cap e(\pi^*(\cE)),\\
    0_{*} &\simeq \pi^!(- \cap e(\cE))
  \end{align*}
  of maps $\CBM(E_{/X}) \to \CBM(E_{/X})\vb{r}$ and $\CBM(X_{/X}) \to \CBM(E_{/X})$, respectively.
\end{lem}

\begin{proof}
  Let $\pi' : E \fibprod_X E \to E$ denote the second projection, regarded as a vector bundle over $E$ with zero section $(0,\id) : E \to E \fibprod_X E$.
  Note that $0 : X \to E$ may be regarded as the inclusion of the derived zero locus of the diagonal section $E \to E \fibprod_X E$.
  The first identity is then \lemref{lem:atmosphere}, and the second is derived by applying $\pi^!$ on the right.
\end{proof}

\begin{proof}[Proof of \propref{prop:vect}]
  Recall that $e(\cE) \in \Ccoh(X)$ is canonically identified with the image of the unit $1 \in \Ccoh(X)$ by the composite
  \begin{equation*}
    \CBM(X_{/X})
    \xrightarrow{0_*} \CBM(E_{/X})
    \xrightarrow{0^!} \CBM(X_{/X})\vb{r}
  \end{equation*}
  under the isomorphism $\Ccoh(X) \simeq \CBM(X_{/X})$, see \cite[Cor.~3.16]{KhanVirtual}.
  By homotopy invariance, $0^! \simeq (\pi^!)^{-1}$ is an isomorphism.
  Since $\cE^\fix \simeq 0$, the concentration theorem implies that $0_*$ is an isomorphism after localization (see \cite[Cor.~4.10]{constack}).
  It follows that $e(\cE)$ is invertible in $\Ccoh_T(X)_\loc$.

  Since $e(\cE)$ is invertible, \lemref{lem:enough} yields
  \[
    \pi^! \simeq 0_*(- \cap e(\cE)^{-1}).
  \]
  Since $\pi^!$ is also invertible with inverse $(\pi^!)^{-1} \simeq 0^!$, it also yields
  \[(0_*)^{-1} \simeq 0^!(-) \cap e(\cE)^{-1}\]
  as claimed.
\end{proof}

\subsection{Homotopy invariance}

To define Euler classes for $2$-term complexes, we will need the following generalized homotopy invariance property:

\begin{thm}\label{thm:homotopy}
  Let $X$ be a quasi-DM derived stack over $S \in \dStk_k$, regarded with trivial $T$-action.
  Let $\cE \in \Dperf^{T,\ge -1}(X)$ be a $T$-equivariant perfect complex whose fixed part $\cE^\fix$ belongs to $\Dperf^{T,\ge 0}(X)$.
  Then the projection $\pi : E := \V_{X}(\cE) \to X$ is quasi-smooth and the Gysin pull-back induces an isomorphism
  \[
    \pi^! : \Chom^{\BM}(\sX_{/S})_\loc \to \Chom^{\BM}(\sE_{/S})_\loc
  \]
  of $\Ccoh(\sX)_\loc$-modules.
\end{thm}

\begin{constr}\label{constr:0!}
  In the situation of \thmref{thm:homotopy}, we define the Gysin pull-back map
  \[ 0^!_T : \Chom^{\BM}(\sE_{/S})_\loc \to \Chom^{\BM}(\sX_{/S})_\loc \]
  as the inverse to $\pi^!$.
\end{constr}

\begin{proof}[Proof of \thmref{thm:homotopy}]
  Using the localization triangle and stratifying $X$ by global quotient stacks, we may assume that $X$ has the resolution property.

  Note that $\pi : E \to X$ factors through $\pi^\mov : E^\mov \to X$ and $E \to E^\mov$, which is a torsor under the vector bundle stack $\pi^\fix : E^\fix \to X$.
  By homotopy invariance for vector bundle stacks \cite[Prop.~2.20]{KhanVirtual}, we may therefore replace $\cE$ by $\cE^\mov$ and assume that $\cE$ has no fixed part.

  Since $X$ has the resolution property, we may argue as in the proof of \cite[Prop.~A.10]{KhanVirtual} by induction on the Tor-amplitude of the perfect complex $\cE$ to reduce to the case where
  \[ \cE = \Cofib(\cE^{-1} \to \cE^0) \in \Dperf^{T,[-1,0]}(X) \]
  with $\cE^{-1}, \cE^0 \in \Dperf^{T,[0,0]}(X)$.
  In this case we claim that
  \begin{equation}\label{eq:sap0d7gf0p1}
    \pi^!(-) = 0_{E,*}(-) \cap \pi^* \big( e(\cE^{-1}) \cap e(\cE^0)^{-1} \big).
  \end{equation}
  Recall that $e(\cE^0)$ and $e(\cE^{-1})$ are invertible by \propref{prop:vect} and $0_{E,*}$ is invertible by \cite[Cor.~4.10]{constack}, so this will imply that $\pi^!$ is invertible.

  Let us prove \eqref{eq:sap0d7gf0p1}.
  Note that the total space $E = \V(\cE)$ fits in a homotopy cartesian square
  \[ \begin{tikzcd}
    E \ar{r}{s}\ar{d}{\pi}
    & E_{0} \ar{d}{p}
    \\
    X \ar{r}{0_{E_1}}
    & E_1,
  \end{tikzcd} \]
  where $E_0 = \V(\cE^0)$ and $E_1 = \V(\cE^{-1})$.
  Recall the formulas
  \begin{align*}
    0_{E_0,*} &= \pi_{E_0}^!(-) \cap \pi_{E_0}^* e(\cE^0)\\
    0_{E_1,*} &= \pi_{E_1}^!(-) \cap \pi_{E_1}^* e(\cE^{-1})
  \end{align*}
  from \propref{prop:vect}.
  The second implies
  \[
    s_{*} \pi^! = \pi_{E_0}^!(-) \cap \pi_{E_0}^* e(\cE^{-1})
  \]
  by applying $p^!$ on the left and using the base change formula $p^! \circ 0_{E_1,*} \simeq s_* \circ \pi^!$.
  Since $s_*$ is an isomorphism by concentration (see \cite[Cor.~3.19, Cor.~4.9]{constack}), \eqref{eq:sap0d7gf0p1} now follows from the above identities.
\end{proof}

\subsection{Euler classes}

We define the Euler class of a perfect complex whose fixed part is a vector bundle.

\begin{constr}\label{constr:euler2}
  Let $X \in \dStk_k$ be a quasi-DM derived stack, regarded with trivial $T$-action.
  Let $\cE \in \Dperf^{T,\ge -1}(X)$ be a $T$-equivariant connective perfect complex whose fixed part $\cE^\fix$ belongs to $\Dperf^{T,\ge 0}(X)$.
  Let $E = \V_X(\cE)$ be the total space and let $0 : X \to E$ denote the zero section.
  The \emph{Euler class}
  $$e_T(\cE) \in \Ccoh_T(X)_\loc = \Ccoh(\sX)_\loc$$
  is the image by the $\Ccoh_T(\Spec(k))_\loc$-linear map
  \begin{equation}\label{eq:b1001b11sd}
    \CBM(\sX_{/\sX})_\loc
    \xrightarrow{0_*} \CBM(\sE_{/\sX})_\loc
    \xrightarrow{0^!_T} \CBM(\sX_{/\sX})_\loc
  \end{equation}
  of the unit $1 \in \Ccoh(\sX)_\loc$, under the isomorphism $\Ccoh(\sX)_\loc \simeq \CBM(\sX_{/\sX})_\loc$.
\end{constr}

\begin{prop}\label{prop:petal}
  Let $X \in \dStk_k$ be a quasi-DM derived stack, regarded with trivial $T$-action.
  Suppose given an exact triangle of $T$-equivariant connective perfect complexes in $\Dperf^{T,\ge -1}(X)$
  $$\cE' \to \cE \to \cE''$$
  whose fixed parts belong to $\Dperf^{T,\ge 0}(X)$.
  Then there is a canonical homotopy
  \[
    e_T(\cE) \simeq e_T(\cE') \cup e_T(\cE'')
  \]
  in $\Ccoh_T(X)_\loc$.
\end{prop}
\begin{proof}
  Write $E = \V_X(\cE)$ and denote by $\pi : E \to X$ and $0 : X \to E$ the projection and zero section, and similarly for $E'$ and $E''$.
  The given exact triangle gives rise to a cartesian square
  \[ \begin{tikzcd}
    E'' \ar{r}{i}\ar{d}{\pi''}
    & E \ar{d}{p}
    \\
    X \ar{r}{0'}
    & E'.
  \end{tikzcd} \]
  It will suffice to exhibit a canonical homotopy of maps
  \[
    0_T^! 0_*(-)
    \simeq 0_T'^! 0'_* \circ 0''^!_T 0''_*(-).
  \]
  By definition of $0_T^!$ it is enough to show that after applying $\pi^!$ on the left, the right-hand side is homotopic to $0_*$.
  This follows easily by combining the identity $\pi^! \simeq p^! \pi'^!$, the base change formula $p^! 0'_* \simeq i_* \pi''^!$, and the identity $i_* 0''_* \simeq 0_*$.
\end{proof}

\begin{prop}\label{prop:h0ubu01b}
  Let $X \in \dStk_k$ be a quasi-DM derived stack, regarded with trivial $T$-action.
  If $\cE \in \Dperf^{T,\ge -1}(X)$ has no fixed part, i.e., $\cE^\fix \simeq 0$, then $e_T(\cE) \in \Ccoh(\sX)_\loc$ is invertible.
\end{prop}
\begin{proof}
  This is equivalent to invertibility of the map \eqref{eq:b1001b11sd}, so we may use the localization triangle in Borel--Moore homology (and a stratification by quotient stacks) to reduce to the case where $X$ is classical and a quotient stack.
  In particular, $X$ admits the resolution property so we may represent $\cE$ as a bounded complex of finite rank locally free sheaves $\cE^i$ (each with no fixed part).
  In that case $e_T(\cE)$ is the cup product of $e(\cE^i)^{(-1)^i}$ by \propref{prop:petal}.
\end{proof}

\section{Gysin pull-backs}
\label{sec:gys}

In this section we construct the Gysin map of \thmref{thm:intro/gyseul}\itemref{item:gyseul/gys}.
See \constrref{constr:gys} for the construction, \propref{prop:funct} for the functoriality, and \propref{prop:self} for the self-intersection formula.
This will conclude the proof of \thmref{thm:intro/gyseul}.

\subsection{Specialization to the normal bundle}
\label{ssec:sp}

Following \cite[\S 3]{KhanVirtual}, we will construct virtual Gysin pull-backs using a construction called specialization to the normal bundle.
In \cite[Thm.~1.3]{KhanVirtual}, these were defined for quasi-smooth morphisms using a derived version of deformation to the normal cone.
In order to define Gysin pull-backs for non-quasi-smooth morphisms, we will require the more general deformation to the normal cone from \cite{HekkingKhanRydh}.

\begin{thm}\label{thm:D}
  Let $f : X \to Y$ be a homotopically finitely presented morphism in $\dStk_k$.
  Then there exists a commutative diagram of derived Artin stacks
  \begin{equation*}
    \begin{tikzcd}
      X \ar{r}{0}\ar{d}{0}
      & X \times \A^1 \ar[leftarrow]{r}\ar{d}{\widehat{f}}
      & X \times \bG_m \ar{d}{f\times \id}
      \\
      N_{X/Y} \ar{r}{i}\ar{d}
      & D_{X/Y} \ar[leftarrow]{r}{j}\ar{d}
      & Y \times \bG_m \ar[equals]{d}
      \\
      Y \ar{r}{0}
      & Y \times \A^1 \ar[leftarrow]{r}
      & Y \times \bG_m
    \end{tikzcd}
  \end{equation*}
  where each square is homotopy cartesian.
\end{thm}
\begin{proof}
  See \cite{HekkingKhanRydh}; we sketch the proof here.
  One defines $D_{X/Y}$ as the \emph{derived Weil restriction} of $X \to Y$ along the inclusion $0 : Y \hook Y \times \A^1$, or equivalently the derived mapping stack $\uMap_{Y\times\A^1}(Y \times \{0\}, X \times \A^1)$.
  It is easy to see that this derived stack fits in the homotopy cartesian diagram above.
  The nontrivial part is the algebraicity (i.e., that it is Artin).
  When $X$ and $Y$ are $1$-Artin and the base $k$ is of finite type over a field, then we can appeal to \cite[Thm.~5.1.1]{HalpernLeistnerPreygel}.\footnotemark
  \footnotetext{This is more than enough for the applications of virtual localization we have in mind.}
  In general, see \cite{HekkingKhanRydh}.
\end{proof}

\begin{constr}[Specialization]\label{constr:sp}
  Let $S\in\dStk_k$ and $f : X \to Y$ a homotopically of finite presentation morphism in $\dStk_S$.
  Associated with the closed/open decomposition
  \[
    N_{X/Y} \xrightarrow{i} D_{X/Y} \xleftarrow{j} Y \times \bG_m.
  \]
  we have the localization triangle
  \[
    \CBM({N_{X/Y}}_{/S}) \xrightarrow{i_*} \CBM({D_{X/Y}}_{/S}) \xrightarrow{j^!} \CBM(Y \times {\bG_m}_{/S}),
  \]
  whose boundary map
  \[
    \partial : \CBM(Y \times {\bG_m}_{/S})[-1] \to \CBM({N_{X/Y}}_{/S})
  \]
  gives rise to the \emph{specialization map}
  \begin{multline*}
    \sp_{X/Y} :
    \CBM(Y_{/S}) \xrightarrow{\mrm{incl}}
    \CBM(Y_{/S}) \oplus \CBM(Y_{/S})(1)[1]\\ \simeq
    \CBM(Y\times{\bG_m}_{/S})[-1] \xrightarrow{\partial}
    \CBM({N_{X/Y}}_{/S}),
  \end{multline*}
  where the splitting comes from the unit section $1 : Y \to Y\times\bG_{m}$.
\end{constr}

Recall that the localization triangle is compatible with proper push-forward and quasi-smooth Gysin maps:

\begin{lem}\label{lem:locprop}
  Let $S\in\dStk_k$ and suppose given a diagram
  \[\begin{tikzcd}
    Z' \ar{r}{i'}\ar{d}{f_Z}
    & X' \ar[leftarrow]{r}{j'}\ar{d}{f}
    & U' \ar{d}{f_U}
    \\
    Z \ar{r}{i}
    & X \ar[leftarrow]{r}{j}
    & U
  \end{tikzcd}\]
  of commutative squares in $\dStk_S$, where $(i,j)$ and $(i',j')$ are pairs of complementary closed and open immersions.

  \begin{thmlist}
    \item\label{item:afsdo0g09} If $f$, $f_Z$, and $f_U$ are proper, then there is a commutative diagram
    \[\begin{tikzcd}
      \CBM(Z'_{/S}) \ar{r}{i'_*}\ar{d}{f_{Z,*}}
      & \CBM(X'_{/S}) \ar{r}{j'^!}\ar{d}{f_*}
      & \CBM(U'_{/S}) \ar{d}{f_{U,*}}
      \\
      \CBM(Z_{/S}) \ar{r}{i_*}
      & \CBM(X_{/S}) \ar{r}{j^!}
      & \CBM(U_{/S}).
    \end{tikzcd}\]

    \item\label{item:afsdo0g10}
    If both squares are homotopy cartesian and $f$ is quasi-smooth, then there is a commutative diagram
    \[\begin{tikzcd}
      \CBM(Z_{/S}) \ar{r}{i_*}\ar{d}{f_Z^!}
      & \CBM(X_{/S}) \ar{r}{j^!}\ar{d}{f^!}
      & \CBM(U_{/S}) \ar{d}{f_U^!}
      \\
      \CBM(Z'_{/S}) \ar{r}{i'_*}
      & \CBM(X'_{/S}) \ar{r}{j'^!}
      & \CBM(U'_{/S}).
    \end{tikzcd}\]
  \end{thmlist}
\end{lem}

\begin{proof}
  The first statement is straightforward.
  For the second: the left-hand square commutes by base change for quasi-smooth Gysin maps, and the right-hand square commutes by functoriality of quasi-smooth Gysin maps.
\end{proof}

Using this we find that the specialization map commutes with proper push-forward and quasi-smooth pull-backs:

\begin{prop}\label{prop:sppush}
  Let $S\in\dStk_k$ and suppose given a commutative square $\Delta$
  \[\begin{tikzcd}
    X' \ar{r}{f'}\ar{d}{p}
    & Y' \ar{d}{q}
    \\
    X \ar{r}{f}
    & Y.
  \end{tikzcd}\]
  in $\dStk_S$.
  \begin{thmlist}
    \item\label{item:sppush}
    Suppose that the square is excessive, i.e., $q$ is proper, the square is cartesian on classical truncations, and the induced morphism $N_{X'/Y'} \to N_{X/Y} \fibprod_X X'$ is a closed immersion.
    Then there is a canonical homotopy
    \[
      N_{\Delta,*} \circ \sp_{X'/Y'} \simeq \sp_{X/Y} \circ q_*
    \]
    of maps $\CBM(Y'_{/S}) \to \CBM({N_{X/Y}}_{/S})$.

    \item\label{item:sppull}
    Suppose that $q$ and the induced morphism $N_\Delta : N_{X'/Y'} \to N_{X/Y}$ are both quasi-smooth.
    (For example, suppose $q$ is quasi-smooth and the square $\Delta$ is homotopy cartesian.)
    Then there is a canonical homotopy
    \[
      N_\Delta^! \circ \sp_{X/Y} \simeq \sp_{X'/Y'} \circ q^!
    \]
    of maps $\CBM(Y_{/S}) \to \CBM({N_{X'/Y'}}_{/S})\vb{-d}$, where $d$ is the relative virtual dimension of $q$.
  \end{thmlist}
\end{prop}
\begin{proof}
  Consider the commutative diagram
  \[\begin{tikzcd}
    N_{X'/Y'} \ar{r}\ar{d}{N_\Delta}
    & D_{X'/Y'} \ar{d}{D_\Delta}\ar[leftarrow]{r}
    & Y' \times \bG_m \ar{d}{q\times\id}
    \\
    N_{X/Y} \ar{r}\ar{d}
    & D_{X/Y} \ar[leftarrow]{r}\ar{d}
    & Y \times \bG_m\ar{d}
    \\
    \{0\} \ar{r}
    & \A^1\ar[leftarrow]{r}
    & \Gm.
  \end{tikzcd}\]
  For \itemref{item:sppush}, the assumption implies that $D_\Delta : D_{X'/Y'} \to D_{X/Y}$ is proper (see \cite{HekkingKhanRydh}).
  By construction of the specialization maps, it is enough to show the following square commutes:
  \begin{equation*}
    \begin{tikzcd}
      \CBM(Y'\times\Gm_{/S})[-1] \ar{rr}{\partial}\ar{d}{(q\times\id)_*}
      & & \CBM({N_{X'/Y'}}_{/S}) \ar{d}{N_{\Delta,*}}
      \\
      \CBM(Y\times\Gm_{/S})[-1] \ar{rr}{\partial}
      & & \CBM({N_{X/Y}}_{/S})
    \end{tikzcd}
  \end{equation*}
  where the horizontal arrows are the boundary maps in the respective localization triangles.
  Thus the claim follows from the compatibility of the localization triangle with proper direct image (\lemref{lem:locprop}\itemref{item:afsdo0g09}).

  For \itemref{item:sppull}, the assumptions imply that $D_\Delta$ is quasi-smooth.
  Indeed, this can be checked fibrewise, and both upper squares in the diagram are homotopy cartesian (since the lower squares and the left-hand and right-hand composite rectangles all are).
  It is enough to show the following square commutes:
  \[ \begin{tikzcd}
    \CBM(Y\times\Gm_{/S})[-1] \ar{rr}{\partial}\ar{d}{(q\times\id)^!}
    & & \CBM({N_{X/Y}}_{/S}) \ar{d}{N_\Delta^!}
    \\
    \CBM(Y'\times\Gm_{/S})[-1] \ar{rr}{\partial}
    & & \CBM({N_{X'/Y'}}_{/S}).
  \end{tikzcd} \]
  This follows from \lemref{lem:locprop}\itemref{item:afsdo0g10}.
\end{proof}

\begin{cor}\label{cor:spi_*}
  Let $S\in\dStk_k$ and $i : Z \to X$ a closed immersion in $\dStk_S$.
  Denote by $0 : Z \to N_{Z/X}$ the zero section of the derived normal bundle.
  Then there is a canonical homotopy
  \[ \sp_{Z/X} \circ i_* \simeq 0_* \]
  of maps $\CBM(Z_{/S}) \to \CBM({N_{Z/X}}_{/S})$.
\end{cor}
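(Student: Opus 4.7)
The plan is to apply \propref{prop:sppush} to the commutative square
\[
  \Delta : \begin{tikzcd}
    Z \ar{r}{\id_Z} \ar{d}{\id_Z} & Z \ar{d}{i} \\
    Z \ar{r}{i} & X,
  \end{tikzcd}
\]
whose right vertical $q = i$ is a proper closed immersion and which is cartesian on classical truncations (since $i$ is a monomorphism). The proposition then supplies a canonical homotopy
\[
  N_{\Delta,*} \circ \sp_{Z/Z} \simeq \sp_{Z/X} \circ i_*
\]
of maps $\CBM(Z_{/S}) \to \CBM({N_{Z/X}}_{/S})$, so the task reduces to identifying the left-hand side with $0_*$.

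This rests on two auxiliary observations. First, that $\sp_{Z/Z}$ is homotopic to the identity: since $L_{Z/Z}$ vanishes, the derived normal bundle $N_{Z/Z}$ identifies canonically with $Z$, the derived deformation space $D_{Z/Z}$ with $Z \times \A^1$, and the localization triangle of \thmref{thm:D} attached to $\id_Z$ becomes the $\A^1$-homotopy invariance triangle $\CBM(Z_{/S}) \to \CBM((Z\times\A^1)_{/S}) \to \CBM((Z\times\Gm)_{/S})$. Unwinding \eqref{eq:sp} and observing that the direct summand inclusion $\CBM(Z_{/S}) \hookrightarrow \CBM((Z\times\Gm)_{/S})[-1]$ matches the one coming from the unit section then shows $\sp_{Z/Z} \simeq \id$. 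Second, that under the identification $N_{Z/Z} = Z$ the map $N_\Delta : N_{Z/Z} \to N_{Z/X}$ agrees with the zero section $0$: this is clear from the functoriality of the diagram \eqref{eq:D}, reading off the fiber over $0 \in \A^1$ of the induced map of deformation spaces $Z \times \A^1 = D_{Z/Z} \to D_{Z/X}$.

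Granted these, we conclude
\[
  \sp_{Z/X} \circ i_* \simeq N_{\Delta,*} \circ \sp_{Z/Z} \simeq 0_*,
\]
as claimed. The main obstacle is the first auxiliary observation: although morally tautological (``specialization to the normal cone'' for the identity should be the identity), its verification requires carefully tracing the identifications in \eqref{eq:sp} through the trivial deformation and matching the boundary map of the localization triangle with the unit section splitting.
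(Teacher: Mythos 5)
Your proposal is correct and follows exactly the paper's own argument: apply \propref{prop:sppush} to the self-intersection square (identities on top and left, $i$ on the right and bottom, cartesian on classical truncations since $i$ is a monomorphism) and use $\sp_{Z/Z}\simeq\id$ together with the identification $N_\Delta \simeq 0$. The extra details you supply about the trivial deformation space and the unit-section splitting are just an unwinding of what the paper leaves implicit.
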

\begin{proof}
  Apply \propref{prop:sppush}\itemref{item:sppush} to the self-intersection square
  \[\begin{tikzcd}
    Z \ar[equals]{r}\ar[equals]{d}
    & Z \ar{d}{i}
    \\
    Z \ar{r}{i}
    & X
  \end{tikzcd}\]
  and note that $\sp_{Z/Z}=\id$.
\end{proof}

\subsection{Gysin pull-backs}

We now return to the $T$-equivariant situation.

\begin{defn}
  Let $f : X \to Y$ be a $T$-equivariant morphism in $\dStk_k$ where $T$ acts trivially on $X$.
  We say that $f$ is \emph{quasi-smooth in weight $0$} if the relative cotangent complex $L_{X/Y}$ lies in $\Dperf^{T,\ge -2}(X)$ and has fixed part $L_{X/Y}^\fix$ in $\Dperf^{T,\ge -1}(X)$.
\end{defn}

\begin{rem}
  Let $i : Z \to X$ be a $T$-equivariant closed immersion, where $T$ acts trivially on $Z$.
  If $i$ is quasi-smooth in weight 0, then the conormal complex $\cN_{Z/X} := L_{Z/X}[-1]$ lies in $\Dperf^{T,\ge -1}(Z)$ with fixed part in $\Dperf^{T,\ge 0}(Z)$.
  Thus we can form the Euler class $e_T(\cN_{Z/X}) \in \Ccoh_T(Z)_\loc$, and it is invertible when $Z$ is quasi-DM and $\cN_{Z/X}$ has no fixed part (\propref{prop:h0ubu01b}).
\end{rem}

\begin{exam}
  Let $X\in \dStk_k$ be quasi-compact Deligne--Mumford with $T$-action, and denote by $Z = X^{hT}$ the homotopy fixed point stack.
  Then the canonical morphism $\varepsilon : Z \to X$ is a closed immersion (\propref{Prop:Fixed_locus_DM_is_closed}) and $\cN_{Z/X}$ has no fixed part (\corref{cor:anisogamy}).
\end{exam}

\begin{constr}\label{constr:gys}
  Let $S\in \dStk_k$ and $f : X \to Y$ a $T$-equivariant morphism in $\dStk_S$, where $T$ acts trivially on $X$.
  Suppose that $X$ is quasi-DM and $f$ is quasi-smooth in weight 0.
  \begin{defnlist}
    \item
    The \emph{Gysin pull-back} map
    \begin{equation*}
      f^!_T : \Chom^{\BM}(\sY_{/S})_\loc \to \Chom^{\BM}(\sX_{/S})_\loc
    \end{equation*}
    is defined as follows.
    Consider the specialization map
    \[ \sp^T_{X/Y} : \CBM(\sY_{/S})_\loc \to \CBM([N_{X/Y}/T]_{/S})_\loc \]
    for $\sX \to \sY$ (\constrref{constr:sp}).
    Then $f^!_T$ is the composite
    \[
      \CBM(\sY_{/S})_\loc
      \xrightarrow{\sp_{\sX/\sY}} \CBM([N_{X/Y}/T]_{/S})_\loc
      \xrightarrow{0^!_T} \CBM(\sX_{/S})_\loc,
    \]
    where $0^!_T$ is as in \constrref{constr:0!}.

    \item
    The \emph{$T$-equivariant fundamental class} of $X \to Y$ is
    \begin{equation*}
      [X/Y]^T := [\sX/\sY] := f^!_T(1) \in \Chom^{\BM,T}(X_{/Y})_\loc
    \end{equation*}
    where $f^!_T$ is the Gysin pull-back with $S=[Y/T]$ and $1 \in \Ccoh_T(Y)_\loc$.
  \end{defnlist}
\end{constr}

\begin{rem}\label{rem:afh-p81}
  If $f$ is in fact \emph{quasi-smooth} (not just in weight 0), then $f^!_T$ is just the usual quasi-smooth Gysin pull-back $f^!$ of \cite[\S 3]{KhanVirtual} by construction.
\end{rem}

\subsection{Functoriality}

Fix $S\in \dStk_k$.

\begin{prop}\label{prop:f^!_T commutes with gys}
  Suppose given a $T$-equivariant homotopy cartesian square
  \[
    \begin{tikzcd}
    X' \ar{r}{f'}\ar{d}{p}
    & Y' \ar{d}{q}
    \\
    X \ar{r}{f}
    & Y
  \end{tikzcd} \]
  in $\dStk_S$, where $T$ acts trivially on $X$ and $X'$, $X$ and $X'$ are quasi-DM, $f$ is quasi-smooth in weight $0$, and $q$ is quasi-smooth.
  Then the square
  \[\begin{tikzcd}
    \CBM(\sY_{/S})_\loc \ar{r}{f^!_T}\ar{d}{q^!}
    & \CBM(\sX_{/S})_\loc \ar{d}{p^!}
    \\
    \CBM(\sY'_{/S})_\loc \ar{r}{f'^!_T}
    & \CBM(\sX'_{/S})_\loc
  \end{tikzcd}\]
  commutes, i.e., there is a canonical homotopy
  \[
    p^! \circ f^!_T \simeq f'^!_T \circ q^!
  \]
  of maps $\CBM(\sY_{/S})_\loc \to \CBM(\sX'_{/S})_\loc$.
\end{prop}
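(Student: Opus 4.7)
The plan is to unpack the definition $f^!_T = 0^!_T \circ \sp_{\sX/\sY}$ from \constrref{constr:gys} and combine two ingredients: the compatibility of specialization with quasi-smooth Gysin pull-back (\propref{prop:sp commutes with gys}) and functoriality of quasi-smooth Gysin pull-backs applied to the square of derived normal bundles. Here $0^!_T$ denotes the inverse to the homotopy invariance isomorphism $\pi^! : \CBM(\sX_{/S})_\loc \to \CBM(\sN_{/S})_\loc$ of \thmref{thm:homotopy} for $\pi : \sN := [N_{X/Y}/T] \to \sX$, and similarly $0'^!_T$ is the inverse to $\pi'^!$ for $\pi' : \sN' := [N_{X'/Y'}/T] \to \sX'$.

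First I would observe that the original homotopy cartesian square descends to a homotopy cartesian square of quotient stacks with $q : \sY' \to \sY$ still quasi-smooth, and hence with $p : \sX' \to \sX$ quasi-smooth by base change. Applying the functorial derived deformation space (\thmref{thm:D}) then yields another homotopy cartesian square
\begin{equation*}
\begin{tikzcd}
\sN' \ar{r}{N_\Delta}\ar{d}{\pi'} & \sN \ar{d}{\pi} \\
\sX' \ar{r}{p} & \sX
\end{tikzcd}
\end{equation*}
in which $N_\Delta$ is the base change of the quasi-smooth $p$ and is therefore itself quasi-smooth. The hypotheses of \propref{prop:sp commutes with gys} are thus satisfied for the quotient square, so (using that localization at the nontrivial characters is exact and preserves the displayed homotopy) we obtain
\begin{equation*}
N_\Delta^! \circ \sp_{\sX/\sY} \simeq \sp_{\sX'/\sY'} \circ q^!.
\end{equation*}

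Second, functoriality of quasi-smooth Gysin pull-backs applied to the two factorizations of $\pi \circ N_\Delta = p \circ \pi'$ yields $N_\Delta^! \circ \pi^! \simeq \pi'^! \circ p^!$. Composing with the inverses $0^!_T$ and $0'^!_T$ of $\pi^!$ and $\pi'^!$ then produces
\begin{equation*}
0'^!_T \circ N_\Delta^! \simeq p^! \circ 0^!_T.
\end{equation*}
Chaining the two homotopies establishes the claim:
\begin{equation*}
p^! \circ f^!_T \simeq p^! \circ 0^!_T \circ \sp_{\sX/\sY} \simeq 0'^!_T \circ N_\Delta^! \circ \sp_{\sX/\sY} \simeq 0'^!_T \circ \sp_{\sX'/\sY'} \circ q^! \simeq f'^!_T \circ q^!.
\end{equation*}
I do not anticipate a significant obstacle: every step reduces to a functoriality or base change property already in place. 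The points deserving minor care are the quasi-smoothness of $N_\Delta$ (automatic from homotopy cartesianity and base-change-stability of quasi-smoothness) and the passage of \propref{prop:sp commutes with gys} to localized Borel--Moore chains, which is immediate from exactness of localization.
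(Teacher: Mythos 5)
Your proposal is correct and matches the paper's own argument: the paper also splices together \propref{prop:sp commutes with gys} (for the specialization maps, applied to the quotient square, whose normal-bundle comparison map is quasi-smooth exactly as you note) with functoriality of quasi-smooth Gysin pull-backs for $\pi\circ N_\Delta = p\circ\pi'$, and then concludes from the definition $f^!_T = 0^!_T\circ\sp_{\sX/\sY}$ by inverting the homotopy-invariance isomorphisms of \thmref{thm:homotopy}. Your extra checks (quasi-smoothness of $p$ and $N_\Delta$, exactness of localization) are exactly the minor points the paper leaves implicit.
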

\begin{proof}
  Consider the commutative diagram
  \[ \begin{tikzcd}
    \CBM(\sY_{/S})_\loc \ar{r}{\sp_{\sX/\sY}}\ar{d}{q^!}
    & \CBM({N_{\sX/\sY}}_{/S})_\loc \ar[leftarrow]{r}{\pi^!}\ar{d}{N_p^!}
    & \CBM(\sX_{/S})_\loc \ar{d}{p^!}
    \\
    \CBM(\sY'_{/S})_\loc \ar{r}{\sp_{\sX'/\sY'}}
    & \CBM({N_{\sX'/\sY'}}_{/S})_\loc \ar[leftarrow]{r}{\pi'^!}
    & \CBM(\sX'_{/S})_\loc
  \end{tikzcd} \]
  where $N_p : N_{\sX/\sY} \to N_{\sX'/\sY'}$ is the induced morphism, and $\pi : N_{\sX/\sY} \to \sX$, $\pi' : N_{\sX'/\sY'} \to \sX'$ are the projections.
  The left-hand square commutes by \propref{prop:sppush}\itemref{item:sppull} and the right-hand square commutes by functoriality of quasi-smooth Gysin pull-backs \cite[Thm.~3.12]{KhanVirtual}.
  The claim thus follows by construction of the Gysin maps $f^!_T$ and $f'^!_T$.
\end{proof}

\begin{prop}\label{prop:funct}
  Let $f : X \to Y$ and $g : Y \to Z$ be $T$-equivariant morphisms in $\dStk_S$.
  Suppose that $X$ is quasi-DM and has trivial $T$-action, $f$ and $g\circ f$ are quasi-smooth in weight 0, and $g$ is quasi-smooth.
  Then we have:
  \begin{thmlist}
    \item
    There is a canonical identification
    \[
      [X/Z]^T \simeq [X/Y]^T \circ [Y/Z]^T \in \Chom^{\BM,T}(X_{/Z})_\loc.
    \]

    \item
    There is a commutative square
    \[ \begin{tikzcd}
      \CBM(\sZ_{/S})_\loc \ar{r}{g^!}\ar[equals]{d}
      & \CBM(\sY_{/S})_\loc \ar{d}{f^!_T}
      \\
      \CBM(\sZ_{/S})_\loc \ar{r}{(g\circ f)^!_T}
      & \CBM(\sX_{/S})_\loc
    \end{tikzcd} \]
    or in other words, a canonical homotopy
    \[
      (g \circ f)^!_T \simeq f^!_T \circ g^!
    \]
    of maps $\CBM(\sZ_{/S})_\loc \to \CBM(\sX_{/S})_\loc$.
  \end{thmlist}
\end{prop}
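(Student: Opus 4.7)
The plan is to deduce both claims from \propref{prop:sp commutes with gys} applied to the (non-cartesian) commutative square
\[\begin{tikzcd} \sX \ar{r}{f} \ar[equals]{d} & \sY \ar{d}{g} \\ \sX \ar{r}{gf} & \sZ \end{tikzcd}\]
of quotient stacks over $S$, together with functoriality of ordinary quasi-smooth Gysin pullbacks and the generalized homotopy invariance of \thmref{thm:homotopy}.

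I first verify that the induced morphism $N_\Delta \colon N_{X/Y} \to N_{X/Z}$ on derived normal bundles is quasi-smooth. The octahedral axiom applied to $X \xrightarrow{f} Y \xrightarrow{g} Z$ yields a cofiber sequence $f^*L_{Y/Z} \to L_{X/Z} \to L_{X/Y}$, hence, after dualizing and shifting by $1$, a cofiber sequence
\[ N_{X/Y} \xrightarrow{N_\Delta} N_{X/Z} \to f^*N_{Y/Z} \]
of derived stacks over $X$. This exhibits $N_\Delta$ as a homotopy base change of the zero section $0 \colon X \to f^*N_{Y/Z}$. Since $g$ is quasi-smooth, $f^*N_{Y/Z}$ is a vector bundle stack, so this zero section, and therefore $N_\Delta$, is quasi-smooth. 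The hypotheses of \propref{prop:sp commutes with gys} are thus satisfied, yielding a canonical homotopy
\[ N_\Delta^! \circ \sp_{X/Z} \simeq \sp_{X/Y} \circ g^!. \]

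Now exploit the triangle of structural projections $\pi_{X/Y} = \pi_{X/Z} \circ N_\Delta$. Functoriality of quasi-smooth Gysin pullbacks gives $\pi_{X/Y}^! \simeq N_\Delta^! \circ \pi_{X/Z}^!$. Under our standing hypotheses that $f$ and $gf$ are quasi-smooth in weight $0$, $X$ has finite stabilizers, and $T$ acts trivially on $X$, \thmref{thm:homotopy} makes both $\pi_{X/Y}^!$ and $\pi_{X/Z}^!$ into isomorphisms in localized $T$-equivariant Borel--Moore homology, with inverses I denote $0^!_{T, X/Y}$ and $0^!_{T, X/Z}$ respectively (following the notation introduced after \thmref{thm:homotopy}). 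The previous identity then rearranges to $0^!_{T, X/Y} \circ N_\Delta^! \simeq 0^!_{T, X/Z}$. Chaining these together, for any $\alpha \in \CBM(\sZ_{/S})_\loc$ one computes
\[ f^!_T(g^!\alpha) = 0^!_{T, X/Y}\,\sp_{X/Y}(g^!\alpha) \simeq 0^!_{T, X/Y}\,N_\Delta^!\,\sp_{X/Z}(\alpha) \simeq 0^!_{T, X/Z}\,\sp_{X/Z}(\alpha) = (gf)^!_T(\alpha), \]
which proves (ii). Part (i) then follows immediately by applying (ii) to $1 \in \Ccoh_T(Z)_\loc$: we get $[X/Z]^T = (gf)^!_T(1) = f^!_T(g^!(1)) = f^!_T([Y/Z]^T)$, which is $[X/Y]^T \circ [Y/Z]^T$ in the bivariant composition notation.

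The main subtlety is verifying quasi-smoothness of $N_\Delta$: this uses the asymmetric hypotheses essentially. If $g$ were only quasi-smooth in weight $0$ rather than fully quasi-smooth, $f^*N_{Y/Z}$ would fail to be a vector bundle stack and the homotopy-cartesian description of $N_\Delta$ would break down. The remaining checks --- that \thmref{thm:homotopy} applies to both projections $\pi_{X/Y}$ and $\pi_{X/Z}$ --- are immediate from the weight-$0$-quasi-smoothness hypotheses on $f$ and $gf$.
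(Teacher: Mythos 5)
Your proof is correct, but it takes a genuinely different route from the paper's. The paper proves (ii) in two steps: it first deforms $g$ to the zero section of $N_{Y/Z}$ using the deformation space $D_{Y/Z}$ and $\A^1$-homotopy invariance to get $(0_{N_{Y/Z}}\circ f)^!_T\circ\sp_{Y/Z}\simeq (g\circ f)^!_T$, and then separately proves $(0_{N_{Y/Z}}\circ f)^!_T\simeq f^!_T\circ 0^!_{N_{Y/Z}}$ using \propref{prop:sp commutes with gys} for a different square together with functoriality for the split bundle $N_{X/Y}\oplus N_{Y/Z}$. You instead invoke \propref{prop:sp commutes with gys} a single time, for the square with the identity on the left and $g$ on the right, and your verification that $N_\Delta\colon N_{X/Y}\to N_{X/Z}$ is quasi-smooth is exactly the point where full quasi-smoothness of $g$ enters: the transitivity triangle gives $N_{X/Y}\simeq N_{X/Z}\fibprod_{f^*N_{Y/Z}}X$, so $N_\Delta$ is a base change of the zero section of $f^*N_{Y/Z}$, whose relative cotangent complex is $f^*L_{Y/Z}$, of Tor-amplitude $\ge -1$ precisely because $g$ is quasi-smooth. (Two small wording caveats: for $1$-Artin $Y,Z$ the conormal $\cN_{Y/Z}$ may have Tor-amplitude $[0,2]$, so $f^*N_{Y/Z}$ need not literally be a vector bundle stack --- what you need, and have, is Tor-connectivity of $\cN_{Y/Z}$, equivalently quasi-smoothness of its zero section; and one should note all maps involved are $\Ccoh(BT)$-linear so the identities persist after localization, as the paper also uses implicitly.) The remaining bookkeeping --- $\pi_{X/Y}^!\simeq N_\Delta^!\circ\pi_{X/Z}^!$ by functoriality of quasi-smooth Gysin maps, invertibility of $\pi_{X/Y}^!$ and $\pi_{X/Z}^!$ from \thmref{thm:homotopy} under the weight-$0$ hypotheses on $f$ and $gf$, and deducing (i) by taking $S=\sZ$ and evaluating at $1$ --- matches the paper. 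What each approach buys: yours is shorter because \propref{prop:sp commutes with gys} already packages the deformation-space argument, and it exploits the derived transitivity cofiber sequence to make the comparison of normal bundles an honest homotopy pullback; the paper's two-step deformation argument does not use that cartesian description and hence adapts more directly to the obstruction-theory variant in \ssecref{ssec:POT} (\thmref{Thm:POT.Funct}), where only a compatibility of obstruction theories, not a derived transitivity triangle, is available.
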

\begin{proof}
  The first claim follows from the second by taking $S=\sZ$ and evaluating on $1 \in \CBM(\sZ_{/\sZ})$.
  For the second, consider the following square:
  \[\begin{tikzcd}[matrix scale=0.8]
    \CBM(\sZ_{/S})_\loc \ar{rr}{(g\circ f)^!_T}\ar[equals]{d}
    &
    & \CBM(\sX_{/S})_\loc \ar[equals]{d}
    \\
    \CBM(\sZ_{/S})_\loc \ar{r}{g^!}\ar{d}{\sp_{Y/Z}}
    & \CBM(\sY_{/S})_\loc \ar{r}{f^!_T} \ar[equals]{d}
    & \CBM(\sX_{/S})_\loc \ar[equals]{d}
    \\
    \CBM({N_{\sY/\sZ}}_{/S})_\loc \ar{r}{0_{N_{\sY/\sZ}}^!}\ar[equals]{d}
    & \CBM(\sY_{/S})_\loc \ar{r}{f^!_T}
    & \CBM(\sX_{/S})_\loc \ar[equals]{d}
    \\
    \CBM({N_{\sY/\sZ}}_{/S})_\loc \ar{rr}{(0_{N_{Y/Z}}\circ f)^!_T}
    &
    & \CBM(\sX_{/S})_\loc
  \end{tikzcd}\]
  The middle left-hand square commutes by definition of the Gysin map $g^!$, and the middle right-hand square commutes tautologically.
  Therefore, to show that the upper rectangle commutes it is enough to show that the total outer composite square commutes, i.e.,
  \begin{equation}\label{eq:a0sf0p1b1}
    (0_{N_{Y/Z}}\circ f)^!_T \circ \sp^T_{Y/Z} \simeq (g\circ f)^!_T,
  \end{equation}
  and that the lower rectangle commutes, i.e.,
  \begin{equation}\label{eq:a0f8sh1}
    (0_{N_{Y/Z}}\circ f)^!_T \simeq f^!_T \circ 0_{N_{Y/Z}}^!.
  \end{equation}

  Let us show \eqref{eq:a0sf0p1b1}.
  Consider the following diagram of $T$-equivariant derived stacks over $S$:
  \[\begin{tikzcd}
    X \ar{r}{0}\ar{d}{f}
    & X \times \A^1 \ar[leftarrow]{r}{1}\ar{d}{f\times\id}
    & X \ar{d}{f}
    \\
    Y \ar{r}{0}\ar{d}{0_{N_{Y/Z}}}
    & Y \times \A^1 \ar[leftarrow]{r}{1}\ar{d}{\hat{g}}
    & Y \ar{d}{g}
    \\
    N_{Y/Z} \ar{r}\ar{d}{u}
    & D_{Y/Z} \ar{d}{v}\ar[leftarrow]{r}
    & Z \ar[equals]{d}
    \\
    Z \ar{r}{0}
    & Z \times \A^1 \ar[leftarrow]{r}{1}
    & Z
  \end{tikzcd}\]
  where each square is homotopy cartesian and $D_{Y/Z}$ is the derived deformation space (\thmref{thm:D}).
  Note that the morphism $h = \hat{g}\circ (f\times \id) : X\times\A^1 \to D_{Y/Z}$ is quasi-smooth in weight 0, since the conditions on Tor-amplitude can be checked on the derived fibres.
  Thus we have the following diagram
  \[\begin{tikzcd}
    \CBM(\sZ_{/S})_\loc\ar[leftarrow]{r}{0^!}\ar{d}{u^!}
    & \CBM(\sZ\times\A^1_{/S})_\loc\ar{r}{1^!}\ar{d}{v^!}
    & \CBM(\sZ_{/S})_\loc\ar[equals]{d}
    \\
    \CBM({N_{\sY/\sZ}}_{/S})_\loc\ar{d}{(0_{N_{Y/Z}}\circ f)^!_T}\ar[leftarrow]{r}
    & \CBM({D_{\sY/\sZ}}_{/S})_\loc\ar{d}{h^!_T}\ar{r}
    & \CBM(\sZ_{/S})_\loc\ar{d}{(g\circ f)^!_T}
    \\
    \CBM(\sX_{/S})_\loc\ar[leftarrow]{r}{0^!}
    & \CBM(\sX\times\A^1_{/S})_\loc\ar{r}{1^!}
    & \CBM(\sX_{/S})_\loc.
  \end{tikzcd}\]
  The two upper squares commute by functoriality of quasi-smooth Gysin maps, and the two lower squares commute by \propref{prop:f^!_T commutes with gys}.
  Moreover, by $\A^1$-homotopy invariance, the two upper and lower horizontal arrows $0^!$ and $1^!$ are isomorphisms and $0^! \simeq 1^!$.
  It follows that the left-hand and right-hand vertical composites are identified.
  Since $u^!\simeq \sp_{Y/Z}$ (by construction of quasi-smooth Gysin maps), this yields the desired homotopy \eqref{eq:a0sf0p1b1}.

  Let us show \eqref{eq:a0f8sh1}.
  By homotopy invariance for the vector bundle stack $\pi : N_{Y/Z} \to Y$, it is enough to show the claim after applying $\pi^!$ on the right, i.e.,
  \[ (0_{N_{Y/Z}}\circ f)^!_T \circ \pi^! \simeq f^!_T. \]
  By definition, $f^!_T$ and $(0_{N_{Y/Z}}\circ f)^!_T$ are the upper and lower composite arrows, respectively, in the following diagram:
  \[\begin{tikzcd}
    \CBM(\sY_{/S})_\loc\ar{r}{\sp_{\sY/\sZ}}\ar{d}{\pi^!}
    & \CBM({N_{\sY/\sZ}}_{/S})_\loc \ar{d}{q^!}\ar[leftarrow]{r}{p^!}
    & \CBM(\sX_{/S})_\loc \ar[equals]{d}
    \\
    \CBM({N_{\sY/\sZ}}_{/S})_\loc \ar{r}{\sp^T_{0_{N_{Y/Z}}\circ f}}
    & \CBM({N_{\sX/\sY}\oplus N_{\sY/\sZ}}_{/S})_\loc \ar[leftarrow]{r}{r^!}
    & \CBM(\sX_{/S})_\loc
  \end{tikzcd}\]
  where $p$, $q$ and $r$ are the projections (so that $p^!$, $q^!$, $r^!$ are invertible).
  The right-hand square commutes by functoriality of quasi-smooth Gysin maps for the composition
  \[
    r : N_{0_{N_{Y/Z}}\circ f} \simeq N_{X/Y} \oplus N_{Y/Z}
    \xrightarrow{q} N_{X/Y}
    \xrightarrow{p} X,
  \]
  and the left-hand square commutes by \propref{prop:sppush}\itemref{item:sppull} applied to the square
  \[\begin{tikzcd}
    X \ar{r}{0_{N_{Y/Z}}\circ f}\ar[equals]{d}
    & N_{Y/Z} \ar{d}{\pi}
    \\
    X \ar{r}{f}
    & Y,
  \end{tikzcd}\]
  where $\pi$ and $q: N_{X/Y} \oplus N_{Y/Z} \to N_{X/Y}$ are both smooth (the latter because $N_{Y/Z} \to Y$ is smooth, as $Y \to Z$ is quasi-smooth).
\end{proof}

\subsection{Self-intersection formula}

\begin{prop}\label{prop:self}
  Let $S\in\dStk_k$ and $i : Z \to X$ a $T$-equivariant closed immersion in $\dStk_S$.
  Suppose that $Z$ is quasi-DM with trivial $T$-action and $i$ is quasi-smooth in weight zero.
  Then we have an equality
  \begin{equation*}
    i^!_T \circ i_* = e(\cN_{\sZ/\sX}) \cap (-)
  \end{equation*}
  of maps $\CBM(Z_{/S})_\loc \to \CBM(Z_{/S})_\loc$.
\end{prop}
\begin{proof}
  We have
  \[ i^!_T \circ i_* = 0^!_T \circ \sp_{\sZ/\sX} \circ i_* \simeq 0^!_T \circ 0_* = e_T(\cN_{\sZ/\sX}) \cap (-), \]
  where the first equality holds by definition of $i^!_T$, the second holds by \corref{cor:spi_*}, and the last holds by definition of $e_T(\cN_{\sZ/\sX})$.
\end{proof}

\section{Virtual localization formula}
\label{sec:virloc}

In the previous two sections we proved our main technical result, \thmref{thm:intro/gyseul}.
We now apply this to deduce \thmref{thm:intro/virlocdm} (hence in particular \thmref{thm:intro/virloc} and \corref{cor:intro/gp}).
The statement for schemes or algebraic spaces is \thmref{thm:bordroom} and the generalization to DM stacks is \thmref{thm:qZBC}.
The proofs will be done in Subsects.~\ref{ssec:pbord} and \ref{ssec:pqZ}, respectively, as special cases of a more general statement (\thmref{thm:virlocgen}).

\subsection{Statements}
\label{ssec:virfix}

\begin{thm}\label{thm:bordroom}
  Let $X \in \dStk_k$ be a quasi-smooth derived algebraic space with $T$-action.
  Let $i : X^{hT} \to X$ denote the inclusion of the homotopy fixed point space (see \defnref{defn:yfgyoav1}).
  Then there is a canonical homotopy
  \begin{equation*}
    (i_*)^{-1} \simeq i_T^!(-) \cap e_T(\cN_{X^{hT}/X})^{-1}
  \end{equation*}
  of maps $\Chom^{\BM,T}(X)_{\loc} \to \Chom^{\BM,T}(Z)_{\loc}$, and a canonical homotopy
  \[[X] \simeq i_* ([X^{hT}] \cap e_T(\cN_{X^{hT}/X})^{-1} ) \]
  in $\Chom^{\BM,T}(X)_\loc$.
\end{thm}

\begin{thm}\label{thm:qZBC}
  Let $X\in\dStk_k$ be quasi-compact quasi-smooth Deligne--Mumford with $T$-action.
  Let $T' \twoheadrightarrow T$ be a reparametrization such that the canonical morphism $X^{hT'} \to X^T$ is surjective (\corref{cor:iouYVkdqDavArMOFS}) and let $i : X^{hT'} \to X$ be the canonical morphism (\defnref{defn:yfgyoav1}).
  Then there is a canonical homotopy
  \begin{equation*}
    (i_*)^{-1} \simeq i_T^!(-) \cap e_T(\cN_{X^{hT'}/X})^{-1}
  \end{equation*}
  of maps $\Chom^{\BM,T}(X)_{\loc} \to \Chom^{\BM,T}(Z)_{\loc}$, and a canonical homotopy
  \[[X] \simeq i_* ([X^{hT'}] \cap e_T(\cN_{X^{hT'}/X})^{-1} ) \]
  in $\Chom^{\BM,T}(X)_\loc$.
\end{thm}

\subsection{The general formula}

We will derive Theorems~\ref{thm:bordroom} and \ref{thm:qZBC} from the following more general statement, which gives a sufficient criterion for a given $T$-invariant closed substack to satisfy virtual localization.
Fix $S\in \dStk_k$.

\begin{thm}\label{thm:virlocgen}
  Let $X\in \dStk_S$ and $Z\sub X$ a $T$-invariant closed derived substack which is quasi-DM with trivial $T$-action.
  Suppose that the conormal sheaf $\cN_{Z/X}$ has no fixed part and for every geometric point of $X\setminus Z$ we have $\St^T_X(x) \subsetneq T_{k(x)}$.
  If $X$ is quasi-smooth over $S$, then
  \begin{equation*}
    i_* : \CBM(\sZ_{/S})_\loc \to \CBM(\sX_{/S})_\loc
  \end{equation*}
  is invertible and there is a canonical homotopy
  \begin{equation*}
    (i_*)^{-1} \simeq i_T^!(-) \cap e_T(\cN_{\sZ/\sX})^{-1}
  \end{equation*}
  of maps $\CBM(\sX_{/S})_{\loc} \to \CBM(\sZ_{/S})_{\loc}$, where $i : Z \to X$ is the inclusion.
\end{thm}
\begin{proof}
  First note that the assumptions imply that $Z$ is also quasi-smooth over $S$ (see \lemref{lem:barkpeel}).
  The concentration theorem in the form of \cite[Cor.~3.7]{constack} implies that the push-forward map
  \[ i_* : \CBM(\sZ_{/S})_\loc \to \CBM(\sX_{/S})_\loc \]
  is an isomorphism.
  By the self-intersection formula (\propref{prop:self}) we have
  \[
    i_T^!(i_*(-)) \cap e_T(\cN_{\sZ/\sX})^{-1}
    \simeq (-) \cap e_T(\cN_{\sZ/\sX}) \cap e_T(\cN_{\sZ/\sX})^{-1}
    \simeq (-).
  \]
  Thus $i_T^!(-) \cap e_T(\cN_{\sZ/\sX})^{-1}$ is a left-sided inverse to $i_*$, hence also a right-sided inverse.
\end{proof}

\begin{cor}\label{cor:-ph101213}
  Let the notation be as in \thmref{thm:virlocgen}.
  Then we have a canonical homotopy
  \begin{equation}\label{eq:jnaob}
    [\sX_{/S}] \simeq i_*([\sZ_{/S}] \cap e_T(\cN_{\sZ/\sX})^{-1})
  \end{equation}
  in $\CBM(\sX_{/S})_{\loc}$.
\end{cor}
\begin{proof}
  Let $p : \sX \to S$ and $q : \sZ \to S$ denote the projections.
  By \propref{prop:funct}, we have a canonical homotopy
  \[i^!_T [\sX_{/S}] = i^!_T \circ p^!(1) \simeq q^!_T(1) = [\sZ_{/S}],\]
  where $q^!_T = q^!$ since $q$ is quasi-smooth (see \remref{rem:afh-p81}).
  Thus the right hand side of \eqref{eq:jnaob} is
  \begin{align*}
    i_*([\sZ_{/S}] \cap e_T(\cN_{\sZ/\sX})^{-1})
    &= i_*(i^!_T([\sX_{/S}]) \cap e_T(\cN_{\sZ/\sX})^{-1})\\
    &= [\sX_{/S}]
  \end{align*}
  where the second equality is \thmref{thm:virlocgen}.
\end{proof}

\subsection{Proof of \thmref{thm:bordroom}}
\label{ssec:pbord}

Recall that $i : X^{hT} \to X$ is $T$-equivariant with respect to the trivial action on $X^{hT}$ (\remref{rem:trivact}).
By \corref{cor:anisogamy}, the conormal sheaf $\cN_{X^{hT}/X}$ is the moving part of $L_X|_{X^{hT}}$.
Since the classical truncation of $X^{hT}$ is identified with the fixed locus $X^{T}$ (\propref{prop:ksno1o1}), $X^{hT}$ contains all geometric points $x$ of $X$ with $\St^T_X(x) = T_{k(x)}$.
Thus we may apply \thmref{thm:virlocgen} and \corref{cor:-ph101213}.

\subsection{Proof of \thmref{thm:qZBC}}
\label{ssec:pqZ}

For the Deligne--Mumford case, we will require the following statement about invariance of Borel--Moore homology under $BG$-torsors.

\begin{thm}\label{thm:torsor}
  Let $S\in\dStk_{k}$, $X,Y \in \dStk_S$, and $G$ a finite group scheme\footnote{%
    i.e., a group stack which is finite schematic over $X$
  } of multiplicative type over $X$.
  Then for every $BG$-torsor ($G$-gerbe) $f : Y \to X$ over $S$, the direct image map
  \[ f_* : \CBM(Y_{/S}) \to \CBM(X_{/S}) \]
  is invertible.
\end{thm}

\begin{cor}
  Let $S\in\dStk_{k}$, $X \in \dStk_S$, and $G$ a finite group scheme of multiplicative type over $k$.
  Then direct image along the proper morphism $X \times BG \to X$ induces an isomorphism
  \[ \CBM(X\times BG_{/S}) \to \CBM(X_{/S}). \]
\end{cor}

\begin{proof}[Proof of \thmref{thm:torsor}]
  By étale descent for $\D$ we may assume that $X$ is affine, $G$ is diagonalizable, and $f$ is trivial, i.e. $Y = X \times BG$.
  Note that if $G$ is a product $H \times H'$ (of group schemes over $X$), then by base change it is enough to show the claim for $G=H$ and $G=H'$.
  Since $G$ is diagonalizable it is enough to treat $G=\mu_{n,X}$, hence either $G$ is finite étale over $X$ or $G_\red \simeq X$.
  In the latter case the claim follows from nilpotent invariance of $\CBM$.

  If $G$ is finite étale over $X$, we may use étale descent again to reduce to the case where $G$ is finite discrete.
  Thus let $G$ be a finite discrete group scheme and let us show that $f_* : \CBM(BG_{/S}) \to \CBM(X_{/S})$ is invertible.
  Let $s : X \twoheadrightarrow BG$ denote the quotient map.
  Consider the simplicial diagram of direct image maps
  \[ \CBM(G^{\bullet+1}_{/S})_{hG} \to \CBM(G^\bullet_{/S}) \]
  where $G^\bullet$, resp. $G^{\bullet+1}$ is the \v{C}ech nerve of $X \twoheadrightarrow BG$, resp. $G \twoheadrightarrow X$.
  This is an isomorphism in every degree by finite Galois codescent (since $G$ acts freely on $G$ and its iterated fibre powers over $S$).
  By proper codescent\footnotemark
  \footnotetext{
    This is the assertion that for $S \in \dStk_k$ fixed, $X \mapsto \CBM(X_{/S})$ satisfies codescent along proper surjections.
    It can be proven as in \cite[Thm.~5.7]{KhanKstack}, using finite étale descent (see \cite[Thm.~14.3.4]{CisinskiDegliseBook}) and topological invariance (see \cite[Cor.~2.1.9]{ElmantoKhan}) as ingredients.
  }
  , passing to the colimit gives rise to the isomorphism
  $$s_* : \CBM(X_{/S}) \simeq \CBM(X_{/S})_{hG} \to \CBM(BG_{/S})$$
  where the isomorphism on the left is because $G$ acts trivially on $X$.
  Since $f_* s_* \simeq \id$, it follows that $f_*$ is also invertible as claimed.
\end{proof}

\begin{proof}[Proof of \thmref{thm:qZBC}]
  For any $Y \in \dStk_k$ with $T$-action, it follows from \thmref{thm:torsor} that the morphism $f : [Y/T'] \to [Y/T]$
  induces an isomorphism
  \[
    f_* : \Chom^{\BM,T'}(Y)_\loc \to \Chom^{\BM,T}(Y)_\loc,
  \]
  since $f$ is a $BG$-torsor for $G=\Ker(T' \twoheadrightarrow T)$.
  As this isomorphism is compatible with proper push-forwards, we find that
  \[
    i_* : \Chom^{\BM,T}(X^{hT'})_\loc \to \Chom^{\BM,T}(X)_\loc
  \]
  is identified with the same map on $\Chom^{\BM,T'}(-)_\loc$.

  It will thus suffice to check the conditions of \thmref{thm:virlocgen} for $i : X^{hT'} \to X$ (where the $T$ in the statement is our $T'$).
  By \propref{Prop:Fixed_locus_DM_is_closed}, it is a closed immersion.
  By \remref{rem:trivact}, it is $T'$-equivariant with respect to the trivial action on $X^{hT'}$.
  By construction of $T' \twoheadrightarrow T$, $X^{hT'}$ contains all geometric points $x$ of $X$ with $\St^T_X(x) = T'_{k(x)}$ (see \corref{cor:iouYVkdqDavArMOFS}).
  We may thus apply \thmref{thm:virlocgen} and \corref{cor:-ph101213} to conclude.
\end{proof}

\section{Simple wall-crossing formula}
\label{sec:wall}

    We prove a wall-crossing formula for simple $\bG_m$-wall crossings as in \cite[\S 2.1, App.~A]{KiemLiWall}, \cite[\S 4]{ChangKiemLi}, \cite[Cor.~2.21]{JoyceEnumerative}.
    In particular, we remove the global resolution assumptions in \emph{op. cit.} (and generalize to Deligne--Mumford stacks over general base fields).

    Let $X$ be a derived Deligne--Mumford stack of finite type over $k$ with $T=\Gm$-action and quotient $\sX=[X/T]$.
    Let $X_+$ and $X_-$ be open substacks of $X$ such that $M_\pm = [X_\pm/T] \sub \sX$ are Deligne--Mumford.
    \begin{defn}
      The \emph{master space} associated to $(X, M_+,M_-)$ is the quotient stack
      \[\mfr{M} = [X\times\P^1\setminus (U_- \times \{0\} \cup U_+ \times \{\infty\})/\Gm]\]
      where $\Gm$ acts diagonally on $X\times\P^1$ and $U_-$ and $U_+$ are the respective complements of $X_+$ and $X_-$ in $X$.
      Note that $\mfr{M}$ is Deligne--Mumford and that the $T$-action on $X$ induces a $T$-action on $\mfr{M}$.
    \end{defn}

    \begin{thm}\label{thm:wall}
      Let $Z = X^{hT'}$ where $T'\twoheadrightarrow T$ is a reparametrization as in \corref{cor:iouYVkdqDavArMOFS}.
      If $X$ is quasi-smooth, then $M_+$, $M_-$ and $Z$ are quasi-smooth and we have
      \[
        [M_+]^\vir - [M_{-}]^\vir \simeq \on{res}_{t = 0} \Big(\frac{[Z]^\vir}{e_T(\cN_{Z/X})}\Big)
      \]
      in $\Chom^{\BM,T}(\mfr{M}^{hT'})_\loc \simeq \CBM(\mfr{M}^{hT'}) \otimes \Ccoh(BT)_\loc$.
    \end{thm}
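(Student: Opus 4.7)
The approach is to apply the virtual torus localization formula (\corref{cor:qZBC}) to the quasi-smooth master space $\mfr{M}$, identify its three $T$-fixed components, and then extract the asserted identity by matching the $t^{-1}$-coefficient (residue at $t=0$) on both sides of the resulting equality.

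\textbf{Step 1 (Geometry of $\mfr{M}$ and its homotopy fixed locus).} First I would check that $\mfr{M}$ is quasi-smooth, which follows since $X\times\P^1$ is quasi-smooth and forming $\mfr{M}$ involves only removing open substacks and taking a quotient by a free-enough diagonal $\Gm$-action. Next, the derived homotopy fixed stack $\mfr{M}^{hT'}$ should decompose as a disjoint union of three closed substacks: the two fibres over $0,\infty\in\P^1$, which recover $M_-$ and $M_+$ after restriction to the opens $X_-,X_+$, and the diagonal quotient of $X^{hT'}\times\Gm$, which recovers $Z$. I would then compute that $e(\cN_{M_\pm/\mfr{M}}) = \pm t$, since the normal direction is the tangent line to $\P^1$ at the fixed point with $T$-weight $\pm 1$, and $e(\cN_{Z/\mfr{M}}) = e(\cN_{Z/X})$, since at $Z$ the $\P^1$-direction is absorbed by the diagonal quotient and only the $X$-direction survives.

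\textbf{Step 2 (Virtual localization).} Applying \corref{cor:qZBC} to $\mfr{M}$ and using the concentration isomorphism of \corref{cor:semireflex} to identify $\Chom^{\BM,T}(\mfr{M})_\loc \simeq \Chom^{\BM,T}(\mfr{M}^{hT'})_\loc$, I obtain
\[
\iota_*^{-1}[\mfr{M}]^\vir \;=\; \frac{[M_+]^\vir}{t} \;-\; \frac{[M_-]^\vir}{t} \;+\; \frac{[Z]^\vir}{e(\cN_{Z/X})}
\]
in $\Chom^{\BM,T}(\mfr{M}^{hT'})_\loc \simeq \CBM(\mfr{M}^{hT'})\otimes\Ccoh(BT)_\loc$, where $\iota\colon \mfr{M}^{hT'}\hookrightarrow\mfr{M}$ denotes the inclusion.

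\textbf{Step 3 (Residue extraction and main obstacle).} Finally I would take the coefficient of $t^{-1}$ on both sides. On the right this produces $[M_+]^\vir - [M_-]^\vir + \on{res}_{t=0}([Z]^\vir/e(\cN_{Z/X}))$; on the left it should vanish, since $[\mfr{M}]^\vir$ lifts canonically to the unlocalized $\Chom^{\BM,T}(\mfr{M})$ and a global residue argument (implicit in the $\P^1$-family structure of $\mfr{M}$) forces the $t^{-1}$-coefficient of $\iota_*^{-1}[\mfr{M}]^\vir$ to be zero. Rearranging then delivers the stated formula, up to an overall sign depending on the orientation convention for $T_0\P^1$ versus $T_\infty\P^1$. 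The hard part will be this residue vanishing: classically one pushes $[\mfr{M}]^\vir$ forward to a point and uses properness of $\mfr{M}$ to conclude the pushforward is polynomial in $t$, but in our potentially non-proper stacky setting this must be replaced by a more delicate argument, for instance using the pushforward along $\mfr{M}\to[\P^1/T]$ combined with torus localization on the base to reduce to the classical residue theorem on $\P^1$.
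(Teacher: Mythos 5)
Your overall strategy---identify the three pieces of $\mfr{M}^{hT'}$, apply the virtual localization formula \corref{cor:qZBC} to the master space, and extract the coefficient of $t^{-1}$---is the same as the paper's, but there are two genuine gaps. First, your Step 1 is exactly where the paper's proof does all of its work, and you leave it as an assertion: one must show that the canonical map $Z \sqcup M_+ \sqcup M_- \to \mfr{M}^{hT'}$ is an isomorphism. The paper proves this by checking that it is bijective on field-valued points and formally \'etale, the latter via \corref{cor:anisogamy}, which computes the relative cotangent complexes over $\mfr{M}$: $L_{Z/\mfr{M}} \simeq L_X|_{Z}^{\mov}[1]$, $L_{M_\pm/\mfr{M}} \simeq \sO^{(\pm 1)}[1]$, and $L_{\mfr{M}^{hT'}/\mfr{M}} \simeq L_{\mfr{M}}|_{\mfr{M}^{hT'}}^{\mov}[1]$; being a radicial \'etale surjection, the map is then an isomorphism. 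This computation is also what underwrites your Euler-class claims (the conormal of $M_\pm$ in $\mfr{M}$ is a \emph{trivial} line bundle of pure weight $\pm 1$, so its equivariant Euler class is exactly $\mp t$, with no lower-order corrections) and, via \remref{rem:barkpeel}, the asserted quasi-smoothness of $Z$ (that of $M_\pm$ being immediate since $X_\pm \sub X$ is open). None of this is supplied in your sketch.

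Second, your Step 3 misplaces the content. The paper concludes simply by applying $\on{res}_{t=0}$ to the identity furnished by \corref{cor:qZBC}; no properness of $\mfr{M}$, no push-forward to a point, and no residue theorem over $[\P^1/T]$ enters. The vanishing you single out as ``the hard part''---that the $t^{-1}$-coefficient of $\iota_*^{-1}[\mfr{M}]^\vir$ is zero---is not justified and cannot serve as the route: read componentwise in $\CBM(\mfr{M}^{hT'}) \otimes \Ccoh(BT)_\loc$, that coefficient has $M_\pm$-components equal to $\mp[M_\pm]^\vir$ on the nose, so in any formulation in which the three components are allowed to interact its vanishing is \emph{equivalent} to the wall-crossing identity being proven---your argument is circular as written---and the properness-based justification you sketch is unavailable, since $\mfr{M}$ is not assumed proper (properness of $M_\pm$ and $Z$ alone does not force such residues to vanish, as the example of $\A^1$ with scaling action already shows). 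Following the paper, the correct final step is just the residue extraction from the localization identity itself, the $M_\pm$-terms contributing $\mp[M_\pm]^\vir$ precisely because their Euler classes are exactly $\mp t$; properness considerations only become relevant afterwards, for the numerical corollary obtained by capping with insertions and pushing forward.
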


    \begin{cor}
      Suppose $M_+$, $M_-$ and $Z$ are moreover proper.
      Given $\alpha \in \pi_0\Ccoh_T(X)\vb{d}$, where $X$ is of virtual dimension $d+1$, let
      $$\alpha_{\pm} \in \pi_0\Ccoh_T(M_\pm)\vb{d}$$
      correspond to $\alpha|_{X_\pm} \in \Ccoh_T(X_\pm)\vb{d} \simeq \Ccoh_T(M_\pm)\vb{d}$, where $X_\pm = M_\pm \fibprod_\sX X \sub X$.
      Then we have
      \[
        \alpha_+ \cdot [M_+]^\vir - \alpha_- \cdot [M_{-}]^\vir
        = \on{res}_{t=0} \Big(\alpha \cdot \frac{[Z]^\vir}{e_T(\cN_{Z/X})}\Big).
      \]
    \end{cor}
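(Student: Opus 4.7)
The plan is to deduce this from Theorem~\ref{thm:wall} by capping with a cohomology class lifting $\alpha$ and pushing forward to $\Spec k$ using properness.

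First I would construct a $T$-equivariant morphism $\pi : \mfr{M} \to \sX = [X/T]$ by projection onto the $X$-factor of $X \times \P^1$, and set $\tilde\alpha := \pi^*\alpha \in \pi_0\Ccoh(\mfr{M})\vb{d}$. Under the canonical identifications $\Ccoh_T(X_\pm)\vb{d} \simeq \Ccoh_T(M_\pm)\vb{d}$ used to define $\alpha_\pm$, and under the inclusion $Z \hookrightarrow X$, the restrictions of $\tilde\alpha$ satisfy $\tilde\alpha|_{M_\pm} \simeq \alpha_\pm$ and $\tilde\alpha|_Z \simeq \alpha|_Z$; verifying this compatibility on $M_\pm$ is the main technical point, and it reduces to unpacking how the inclusions $M_\pm \hookrightarrow \mfr{M}^{hT'}$ sit over $0,\infty \in \P^1$ and intertwine with $\pi$.

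Next I would cap both sides of Theorem~\ref{thm:wall} with $\tilde\alpha$ inside $\Chom^{\BM,T}(\mfr{M}^{hT'})_\loc$. Since the cap product is $\Ccoh(BT)_\loc$-linear, it commutes with the operator $\on{res}_{t=0}$ (extraction of the $t^{-1}$-coefficient, viewed as a $\Ccoh(BT)_\loc$-module map), yielding
\[
\tilde\alpha \cap \bigl([M_+]^\vir - [M_-]^\vir\bigr) \simeq \on{res}_{t=0}\Bigl(\tilde\alpha \cap \frac{[Z]^\vir}{e(\cN_{Z/X})}\Bigr).
\]

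Finally, I would push forward via $p : \mfr{M}^{hT'} \to \Spec k$; this is well-defined on the classes in question because their supports (namely $M_\pm$ and $Z$) are proper. The projection formula and the $\Ccoh(BT)_\loc$-linearity of $p_*$ (whence $p_*$ again commutes with $\on{res}_{t=0}$) then transform the left-hand side into $\alpha_+\cdot[M_+]^\vir - \alpha_-\cdot[M_-]^\vir$ and the right-hand side into $\on{res}_{t=0}\bigl(\alpha \cdot [Z]^\vir / e(\cN_{Z/X})\bigr)$, producing the claimed identity. Once Theorem~\ref{thm:wall} is in hand, the remainder is essentially formal manipulation; the only real obstacle is verifying the compatibility $\tilde\alpha|_{M_\pm} \simeq \alpha_\pm$ with the identifications built into the definition of $\alpha_\pm$.
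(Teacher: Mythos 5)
Your overall strategy (lift $\alpha$ to the master space, cap, and integrate using properness of $M_\pm$ and $Z$) is the right genre, and your computation of the restrictions over $M_\pm$ is indeed both correct and necessary. But there is a genuine gap: your lift $\tilde\alpha=\pi^*\alpha\in\pi_0\Ccoh(\mfr{M})\vb{d}$ is a \emph{non-equivariant} class on $\mfr{M}$, and the composite $Z\hookrightarrow\mfr{M}\xrightarrow{\pi}\sX$ factors through the atlas $X\twoheadrightarrow\sX$; hence $\tilde\alpha|_{Z}$ is only the image of $\alpha|_Z$ under $\Ccoh_T(Z)\to\Ccoh(Z)$, i.e.\ the $t$-independent part of the equivariant restriction. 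The right-hand side of the corollary, however, involves the full equivariant restriction $\alpha|_Z\in\Ccoh_T(Z)$, which in general carries nontrivial positive powers of $t$ (already for $\alpha$ an equivariant Chern class), and these pair nontrivially with the polar part of $e(\cN_{Z/X})^{-1}$ under $\on{res}_{t=0}$. So the asserted compatibility ``$\tilde\alpha|_Z\simeq\alpha|_Z$'' conflates two different classes, and the identity your argument produces has the wrong insertion on the $Z$-side. Relatedly, the justification that $\on{res}_{t=0}$ is a $\Ccoh(BT)_\loc$-module map is false ($\on{res}_{t=0}(t\cdot c)\ne t\cdot\on{res}_{t=0}(c)$); capping commutes with $\on{res}_{t=0}$ only for $t$-free classes, which is precisely why your construction is forced into the $t$-independent (and hence incorrect) insertion.

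For the same reason the corollary cannot be deduced by capping the already-residued statement of \thmref{thm:wall}; one has to go back to the pre-residue master-space localization displayed in its proof (coming from \corref{cor:qZBC}),
\[
  [\mfr{M}/T]^\vir \;=\; \frac{[M_+]^\vir}{-t}+\frac{[M_-]^\vir}{t}+\frac{[Z]^\vir}{e(\cN_{Z/X})},
\]
cap it with the $T$-\emph{equivariant} pullback of $\alpha$ along $[\mfr{M}/T]\to\sX$ (the equivariant enhancement of your $\pi$), and use the projection formula. The key restriction computations are then: on $[M_\pm/T]\simeq M_\pm\times BT$ the equivariant restriction is pulled back from $M_\pm$, hence $t$-free and equal to $\alpha_\pm$ (this is the identification built into the definition of $\alpha_\pm$), while on $[Z/T]$ it is the full $\alpha|_Z$. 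Pushing forward to the point (legitimate on each term since $M_\pm$ and $Z$ are proper) and applying $\on{res}_{t=0}$, with the residue of the left-hand side vanishing by the same argument as in the proof of \thmref{thm:wall}, yields exactly the claimed formula, as in \cite[\S 4]{ChangKiemLi}. The paper gives no written proof of this corollary, but this rerunning of the theorem's proof with the equivariant insertion is evidently the intended one; your version needs the equivariance of the lift and the pre-residue identity to be repaired.
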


    Here $t \in \pi_0\Ccoh(BT)\vb{1} \simeq \pi_0\Ccoh(\Spec(k))[t,t^{-1}]$ is the first Chern class of the tautological line bundle, and $\on{res}_{t=0}$ denotes the residue of a Laurent series at $t=0$.

    \begin{proof}[Proof of \thmref{thm:wall}]
      There is a canonical morphism
      \begin{equation}\label{eq:stapled}
        Z \coprod M_+ \coprod M_- \to \mfr{M}^{hT'}
      \end{equation}
      which is clearly bijective (on field-valued points).
      It is also formally étale, as one can see immediately from \corref{cor:anisogamy}.
      Indeed, the relative cotangent complexes over $\mfr{M}$ are given by
      \begin{align*}
        L_{Z/\mfr{M}} &\simeq L_{Z/X} \simeq L_X|_{Z}^\mov[1],\\
        L_{M_+/\mfr{M}} &\simeq L_{M_+/[X_+\times(\P^1\setminus\{\infty\})/\Gm]} \simeq \cO^{(1)}[1],\\
        L_{M_-/\mfr{M}} &\simeq L_{M_-/[X_-\times(\P^1\setminus\{0\})/\Gm]} \simeq \cO^{(-1)}[1],\\
        L_{\mfr{M}^{hT'}/\mfr{M}} &\simeq L_{\mfr{M}}|_{\mfr{M}^{hT'}}^\mov[1],
      \end{align*}
      where $(-)^{(i)}$ indicates the weight of the $\Gm$-action.
      As a radicial étale surjection, \eqref{eq:stapled} is an isomorphism.
      Now it follows from \thmref{thm:qZBC} that we have
      \[
        [\mfr{M}/T]^\vir = \frac{[M_+]^\vir}{-t} + \frac{[M_{-}]^\vir}{t} + \frac{[Z]^\vir}{e_T(\cN_{Z/X})}.
      \]
      The claim follows by applying $\on{res}_{t=0}$.
    \end{proof}

    \begin{rem}
      Following \secref{sec:POT}, one could also formulate the above result using the language of perfect obstruction theories.
      In particular, one may remove the global embeddability or global resolution hypotheses in \cite[Thm.~A.2]{KiemLiWall} and \cite[Thm.~4.2]{ChangKiemLi}.
      See also \cite[Cor.~2.21, Rem.~2.20]{JoyceEnumerative}.
    \end{rem}

    \begin{rem}
      In particular, this proves the non-symmetric analogue of \cite[Conj.~1.2]{KiemLiWall}.
    \end{rem}

\appendix

\section{Fixed loci of group actions on algebraic stacks}
\label{sec:fixed}

In this appendix we fix a scheme $S$ and an fppf group algebraic space $G$ over $S$.
All Artin stacks will be assumed to have separated diagonal.

For an Artin stack $X$ over $S$ with $G$-action, we will define a fixed locus $X^G \sub X$ and study its properties.
We will also introduce a homotopy fixed point stack $X^{hG}$, which usually is not a substack of $X$ but has better deformation-theoretic properties.
In the case of torus actions, we will prove a certain relation between the two constructions (\thmref{thm:boundedness}).

\subsection{Stabilizers of group actions}
\label{ssec:stab}

Given an action of an algebraic group $G$ on an Artin stack $X$, we define the stabilizer of the action (``$G$-stabilizer'') at any point $x$ of $X$.
When the stabilizer at $x$ of $X$ itself is trivial, this coincides with the stabilizer of the quotient stack $[X/G]$ at $x$.

\begin{rem}\label{rem:Yana}
  Let $f : X \to Y$ be a morphism of Artin stacks over $S$.
  The relative inertia stack $I_{X/Y}$ is a group Artin stack over $X$ which fits into a cartesian square
  \[\begin{tikzcd}
    I_{X/Y} \ar{r}\ar{d}
    & I_{X/S} \ar{d}
    \\
    X \ar{r}
    & I_{Y/S} \fibprod_Y X
  \end{tikzcd}\]
  of group stacks over $X$.
  The lower horizontal arrow is the base change of the identity section $e : Y \to I_{Y/S}$.
  When $f$ is representable, $I_{X/Y} \to X$ is an isomorphism, i.e., $I_{X/S} \to I_{Y/S} \fibprod_Y X$ is a monomorphism of group stacks.
  See e.g. \cite[Tag~050P]{Stacks}.
\end{rem}

\begin{rem}
  Let $X$ be an Artin stack over $S$ with $G$-action and denote by $\sX = [X/G]$ the quotient stack.
  Applying \remref{rem:Yana} to the morphisms $X \twoheadrightarrow \sX$ and $\sX \to BG$, we get the cartesian squares of group stacks over $X$
  \[
    \begin{tikzcd}
      X \ar{r}\ar{d}
      & I_{X/S} \ar{r}\ar{d}
      & X \ar{d}
      \\
      X \ar{r}
      & I_{\sX/S} \fibprod_{\sX} X \ar{r}
      & G \fibprod_S X,
    \end{tikzcd}
  \]
  where $I_{X/\sX} \simeq X$ since $X \twoheadrightarrow \sX$ is representable and the right-hand vertical arrow is the identity section.
  For every $S$-scheme $A$ and every $A$-valued point $x$ of $X$, this gives rise to an exact sequence of group algebraic spaces over $A$
  \begin{equation}\label{eq:evvTWDTvdYQvm}
    1 \to \underline{\Aut}_X(x) \to \underline{\Aut}_{\sX}(x)
    \xrightarrow{\alpha_A} G_A
  \end{equation}
  where $G_A := G \fibprod_S A$ denotes the fibre of $G$ over $x$.
\end{rem}

\begin{defn}\label{defn:stabilizer}
  Let $X$ be an Artin stack over $S$ with $G$-action.
  For any scheme $A$ and every $A$-valued point $x$ of $X$, the \emph{$G$-stabilizer} (or \emph{stabilizer of the $G$-action}) at $x$ is an fppf sheaf of groups $\St^G_X(x)$ defined as the cokernel of the homomorphism $\uAut_X(x) \hook \uAut_\sX(x)$.
  Thus we have a short exact sequence
  \begin{equation}\label{eq:stabses}
    1 \to \underline{\Aut}_X(x) \to \underline{\Aut}_{\sX}(x) \to \St^G_X(x) \to 1
  \end{equation}
  of sheaves of groups over $A$.
  Note that $\St^G_X(x)$ can be regarded as a subgroup of $G_{A}$, since it is the image of $\alpha_A : \uAut_{\sX}(x) \to G_{A}$.
\end{defn}

\begin{rem}
  For a field-valued point $x : \Spec(k(x)) \to X$, the $G$-stabilizer $\St^G_X(x)$ is a group algebraic space.
  This follows from \cite[Exp.~V, Cor.~10.1.3]{SGA3}, since in this case $\uAut_X(x)$ is flat over $\Spec(k(x))$.
  Since $X$ has separated diagonal, $\St^G_X(x)$ is moreover a group \emph{scheme} by \cite[0B8F]{Stacks}.
\end{rem}

\begin{rem}\label{rem:noboh1b1}
  When $X$ has trivial stabilizers (i.e., is an algebraic space), then the $G$-stabilizer $\St^G_X(x)$ at a point $x$ is nothing else than the automorphism group $\uAut_{\sX}(x)$ of the quotient stack $\sX = [X/G]$.
\end{rem}

\begin{rem}
  Let $X$ be a \emph{derived} Artin stack locally of finite type over $k$ with $G$-action.
  For any field-valued point $x$ of $X$, the \emph{$G$-stabilizer} of $X$ at $x$ is defined to be the $G$-stabilizer of the classical truncation $X_\cl$ at $x$.
\end{rem}

\begin{rem}\label{rem:zymogenous}
  Let $X$ be an Artin stack over $S$ with $G$-action.
  Let $A$ be an $S$-scheme and $x$ an $A$-valued point of $X$.
  From the short exact sequence \eqref{eq:stabses} we see that the induced morphism $B\uAut_X(x) \to B\uAut_\sX(x)$ is a $\St^G_X(x)$-torsor, where $\sX = [X/G]$.
  Moreover, there is a commutative diagram
  \[\begin{tikzcd}
    B\uAut_X(x) \ar{r}\ar[twoheadrightarrow]{d}
    & X \ar[twoheadrightarrow]{d}
    \\
    B\uAut_\sX(x) \ar{r}
    & \sX
  \end{tikzcd}\]
  where the right-hand arrow is a $G$-torsor.
  In particular, we find that there is a canonical $\St^G_X(x)$-action on the group algebraic space $\uAut_X(x)$, and the canonical monomorphism
  \[ B\uAut_X(x) \hook X \]
  is equivariant with respect to the $\St^G_X(x)$-action on the source and $G$-action on the target.
\end{rem}

\begin{notat}\label{notat:bedark}
  We denote by $\mfr{a}(x)^\vee$ the dual Lie algebra of the group algebraic space $\uAut_X(x)$ over $A$, i.e.,
  \[ \mfr{a}(x)^\vee = e^* \Omega^1_{\uAut_X(x)/A} \]
  where $e : A \to \uAut_X(x)$ is the identity section.
  The $\St^G_X(x)$-action on $\uAut_X(x)$ (\remref{rem:zymogenous}) descends to $\mfr{a}(x)^\vee$.
\end{notat}

\subsection{Fixed points}

\begin{defn}\label{defn:oy1bpy1b1}
  Let $X$ be an Artin stack over $S$ with $G$-action.
  The \emph{$G$-fixed locus} $X^G \sub X$ is the locus where the canonical homomorphism of group algebraic spaces over $X$
  \begin{equation}\label{eq:Obp01bp01}
    I_{\sX} \fibprod_{\sX/S} X \to G \fibprod_S X
  \end{equation}
  is surjective\footnote{%
    i.e., an effective epimorphism of fppf sheaves
  }, where $\sX = [X/G]$.
  In other words, let $A$ be an $S$-scheme and $x \in X(A)$ an $A$-point, and consider the homomorphism of group algebraic spaces over $A$ \eqref{eq:evvTWDTvdYQvm}
  \begin{equation*}
    \alpha_A : \uAut_{\sX}(x) \to G_{A}
  \end{equation*}
  obtained by base changing \eqref{eq:Obp01bp01} along $x : A \to X$.
  Then $x$ belongs to the fixed locus $X^G$ if and only if $\alpha_A$ admits a section after base change along some fppf cover $A' \twoheadrightarrow A$.
\end{defn}

\begin{rem}
  For an $A$-point $x$ of $X$, recall that the image of $\alpha_A$ is the $G$-stabilizer $\St^G_X(x)$ at $x$ (\defnref{defn:stabilizer}).
  Thus, the fixed locus $X^G$ is the locus of points $x$ where the inclusion $\St^G_X(x) \sub G_A$ is an equality.
\end{rem}

\begin{quest}
  Is the inclusion $X^G \to X$ a closed immersion?
\end{quest}

We will see that this holds for algebraic spaces (\propref{prop:ksno1o1}).
It also ``almost'' holds for split torus actions on quasi-DM stacks:

\begin{prop}\label{prop:XTred Artin}
  Suppose $G$ acts on a derived quasi-DM stack $X$ locally of finite type over $k$.
  If $G$ has connected fibres over $S$, then the subset $\abs{X^G} \sub \abs{X}$ is closed.
  In particular, there exists a reduced closed substack $X^G_\red$ of $X$ such that $\abs{X^G_\red} = \abs{X^G}$.
\end{prop}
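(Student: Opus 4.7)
The plan is to identify the underlying set $\abs{X^G}$ with the preimage under $\abs{X} \to \abs{\sX}$ of a closed locus $\abs{\sX_{\ge r}} \sub \abs{\sX}$, where $\sX = [X/G]$ and $r := \dim G$ is the (locally constant) fibre dimension of the flat morphism $G \to S$. The locus $\abs{\sX_{\ge r}}$ consists of those $\bar x \in \abs{\sX}$ with $\dim \uAut_\sX(\bar x) \ge r$, and it is closed by the standard upper semi-continuity of stabilizer dimensions for $1$-Artin stacks with affine stabilizers (see \cite[App.~C]{EdidinRydh}). Since $\abs{X}\to\abs{\sX}$ is continuous, pulling back yields a closed subset of $\abs{X}$.

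The main work is then the set-theoretic identification of this preimage with $\abs{X^G}$. For a field-valued point $x$ of $X$ with residue field $\kappa$, the exact sequence \eqref{eq:stabses}
\[
  1 \to \uAut_X(x) \to \uAut_\sX(x) \to \St^G_X(x) \to 1,
\]
combined with finiteness of $\uAut_X(x)$, gives $\dim \uAut_\sX(x) = \dim \St^G_X(x) \le r$. The inclusion $\abs{X^G} \sub \abs{X_{\ge r}}$ is immediate from the definition of $X^G$, since scheme-theoretic equality $\St^G_X(x) = G_\kappa$ trivially forces equality of dimensions. The reverse inclusion is where I would use connectedness of the fibres of $G$: the image $\St^G_X(x) \sub G_\kappa$ is a closed subgroup scheme, and equality of dimensions forces $\abs{\St^G_X(x)} = \abs{G_\kappa}$ by irreducibility of $\abs{G_\kappa}$. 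This topological equality is enough to conclude $x \in \abs{X^G}$, since the proposition only concerns the underlying set. Finally, $X^G_\red$ is defined by endowing this closed subset with its reduced induced substack structure.

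The subtlety to handle with care is precisely the last step: a closed subgroup of a connected group scheme of equal dimension need not coincide with the ambient group as a \emph{scheme} when non-reduced phenomena occur (e.g.\ $\{e\} \sub \alpha_p$ in characteristic $p$). However, since the proposition only asserts closedness of the underlying set $\abs{X^G}$ and defines $X^G_\red$ as a reduced substack, only topological equality is required, and this is supplied directly by the irreducibility of $\abs{G_\kappa}$ under the connected-fibres hypothesis on $G$.
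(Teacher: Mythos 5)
Your route is the same as the paper's: finiteness of $\uAut_X(x)$ and the sequence \eqref{eq:stabses} give $\dim \St^G_X(x) = \dim \uAut_\sX(x)$, and closedness is then extracted from upper semi-continuity of the fibre dimension of the inertia projection $I_\sX \to \sX$ (the paper argues directly on $\abs{\sX}$, you pull back along $\abs{X}\to\abs{\sX}$; that difference is immaterial). The gap is in your last step. A field-valued point $x$ lies in $\abs{X^G}$ if and only if it factors through the subfunctor $X^G$, i.e.\ if and only if $\alpha_{k(x)}$ is an fppf surjection, i.e.\ $\St^G_X(x) = G_{k(x)}$ as subgroup schemes; passing to the underlying set of $X^G$ does not relax this pointwise scheme-theoretic condition, so topological equality $\abs{\St^G_X(x)} = \abs{G_{k(x)}}$ does not put $x$ into $\abs{X^G}$. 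Concretely, for $G = \alpha_p$ (connected fibres) acting in characteristic $p$ on $X = \A^1_k$ by translations through $\alpha_p \sub \bG_{a,k}$, every point has trivial $G$-stabilizer, so the dimension (and topological) criterion holds at every point, while $\abs{X^G} = \initial$; thus your identification of $\abs{X^G}$ with the preimage of the maximal-stabilizer-dimension locus is false for non-smooth connected $G$, and the argument as written establishes closedness of the wrong subset.

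The repair is to use (geometric) reducedness, equivalently smoothness, of the fibres of $G$: if $G_{k(x)}$ is smooth and connected and $H \sub G_{k(x)}$ is a closed subgroup with $\dim H = \dim G_{k(x)}$, then $G_{k(x)}/H$ is smooth, connected, zero-dimensional with a rational point, hence equal to $\Spec(k(x))$, so $H = G_{k(x)}$. With that input the dimension criterion does characterize $\abs{X^G}$, which covers the case actually used in the body of the paper (split tori), and in characteristic zero there is no issue at all. For what it is worth, the paper's own proof asserts the equivalence of $\St^G_X(x) = G_{k(x)}$ with the dimension equality citing only irreducibility of $G_{k(x)}$, so it leans silently on the same reducedness point; the difference is that you raised the non-reduced phenomenon explicitly and then dismissed it with a non-sequitur, so your write-up needs either an explicit smoothness/reducedness hypothesis on the fibres of $G$ or a genuinely different argument for non-reduced connected groups.
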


\begin{proof}
  The subset $\abs{X^G}$ is the locus of points $x \in \abs{X}$ for which $\St^G_X(x)$ is equal to $G_{k(x)} = G \fibprod_S \Spec(k(x))$.
  Since $G_{k(x)}$ is connected (hence irreducible, see \cite[Exp.~VI\textsubscript{A}, Cor.~2.4.1]{SGA3}), this is equivalent to the condition that $\dim(\St^G_X(x)) = \dim(G_{k(x)})$.
  Since $X$ has finite stabilizers, the short-exact sequence \eqref{eq:stabses} shows that $\dim(\St^G_X(x)) = \dim(\uAut_{\sX}(x))$.
  Note that $\uAut_\sX(x)$ is the fibre $\pi^{-1}(x)$ of the projection of the inertia stack $\pi : I_{\sX} \to \sX$ of $\sX=[X/G]$, so closedness of the locus where $\dim(\uAut_{\sX}(x)) = \dim(G_{k(x)})$ follows from the upper semi-continuity of the function $x \mapsto \dim(\pi^{-1}(x))$ on $\abs{\sX}$ (see \cite{RydhSemicontinuous}, \cite[Tag~0DRQ]{Stacks}).
\end{proof}

\begin{defn}\label{defn:XGred}
  Let the notation be as in \propref{prop:XTred Artin}.
  The Artin stack $X^G_\red$ is called the \emph{reduced $G$-fixed locus} of $X$.
  If $X$ is a \emph{derived} quasi-DM stack with $G$-action, then its reduced $G$-fixed locus is the reduced $G$-fixed locus of $X_\cl$ (with the induced $G$-action).
\end{defn}

\subsection{Homotopy fixed points}
\label{ssec:hofix}

For classical stacks, the following definition is studied in \cite{RomagnyGroupActions,RomagnyFixedMult,FixedLocusRomagny}.

\begin{defn}\label{defn:yfgyoav1}
  Let $X$ be a derived Artin stack over $S$ with $G$-action.
  The \emph{homotopy fixed point stack} of $X$ is the stack of $G$-equivariant morphisms $S \to X$, i.e.,
  \[
    X^{hG} := \uMap^G_S(S, X),
  \]
  where $S$ is regarded with trivial $G$-action.
\end{defn}

\begin{rem}\label{rem:trivact}
  For every derived stack $A$ over $S$, we have by definition a canonical isomorphism
  \[
    \Maps_S(A, X^{hG})
    \simeq \Maps_S(A, \uMap^G_S(S, X))
    \simeq \Maps^G_S(A, X)
  \]
  where $A$ is regarded with trivial $G$-action.
  For example, the identity $X^{hG} \to X^{hG}$ corresponds to a canonical $G$-equivariant $S$-morphism
  \begin{equation*}
    \varepsilon := \varepsilon_X^G : X^{hG} \to X,
  \end{equation*}
  where $X^{hG}$ is regarded with trivial $G$-action.
\end{rem}

\begin{rem}\label{rem:gfp01b}
  Equivalently, $X^{hG}$ can be described as the Weil restriction of $[X/G] \to BG$ along $BG \to S$.
  In other words, it classifies sections of $[X/G] \to BG$ and fits into the homotopy cartesian square
  \[
    \begin{tikzcd}
      X^{hG} \ar{r}\ar{d}
      & \uMap_S(BG, [X/G]) \ar{d}
      \\
      S \ar{r}{\id_{BG}}
      & \uMap_S(BG, BG).
    \end{tikzcd}
  \]
\end{rem}

\begin{rem}\label{rem:onoyb01b1}
  Yet another way to describe $X^{hG}$ is that it classifies group-theoretic sections of the homomorphism of group algebraic spaces over $X$
  \[
    I_\sX \fibprod_\sX X \to G \fibprod_S X
  \]
  where $\sX = [X/G]$.
  Indeed, there is a cartesian square
  \[\begin{tikzcd}
    X^{hG} \ar[swap]{d}{\varepsilon}\ar{r}
    & \uGrp_X(G\fibprod_S X, I_\sX \fibprod_\sX X) \ar{d}
    \\
    X \ar{r}{\id}
    & \uGrp_X(G\fibprod_S X, G\fibprod_S X)
  \end{tikzcd}\]
  where $\uGrp_X(-,-)$ denotes the sheaf of group homomorphisms over $X$.
  See e.g. \cite[Lem.~4.1.2]{FixedLocusRomagny}.
  In terms of points, we see that for any $S$-scheme $A$ a lift of an $A$-point $x$ of $X$ along $\varepsilon : X^{hG} \to X$ amounts to a \emph{group-theoretic section} of the homomorphism \eqref{eq:evvTWDTvdYQvm}
  $$\alpha_A : \uAut_{\sX}(x) \to G_{A}$$
  of group algebraic spaces over $A$.
  Comparing with \defnref{defn:oy1bpy1b1}, we find in particular that $\varepsilon: X^{hG} \to X$ factors through the fixed locus $X^G \sub X$.
\end{rem}

\begin{rem}
  Informally speaking, we can think of a point of $X^{hG}$ as a point $x$ of $X$ together with a collection of ``fixings'', i.e., for every point $g$ of $G$ an isomorphism $g \cdot x \simeq x$, together with a homotopy coherent system of compatibilities between them (with respect to the group operation).
\end{rem}

For the next statement, we recall the notion of \emph{formal properness} from \cite{HalpernLeistnerPreygel}.

\begin{rem}\label{rem:bigwiggery}
  The classifying stack $BG$ is formally proper over $S$ when either
  \begin{inlinelist}
    \item
    $S$ is the spectrum of a field and $G$ is reductive (see \cite[Ex.~4.3.5]{HalpernLeistnerPreygel}); or
    \item
    $S$ is noetherian and $G$ is linearly reductive (see Prop.~4.2.3 and Thm.~4.2.1 in \cite{HalpernLeistnerPreygel}).
  \end{inlinelist}
\end{rem}

\begin{thm}[Halpern-Leistner--Preygel]\label{thm:collegial}
  Let $X$ be a derived Artin stack locally of finite type over $S$ with $G$-action.
  Assume $BG$ is formally proper over $S$.
  If $X$ is $1$-Artin with affine stabilizers, then the derived stack $X^{hG}$ is $1$-Artin with affine stabilizers.
\end{thm}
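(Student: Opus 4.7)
The plan is to identify $X^{hG}$ with a relative mapping stack and then apply the algebraicity theorem of Halpern-Leistner--Preygel for mapping stacks out of formally proper bases. Concretely, by \remref{rem:gfp01b}, $X^{hG}$ sits in the homotopy cartesian square
\[
  \begin{tikzcd}
    X^{hG} \ar{r}\ar{d}
    & \uMap_S(BG, [X/G]) \ar{d}
    \\
    S \ar{r}{\id}
    & \uMap_S(BG, BG),
  \end{tikzcd}
\]
so it suffices to show that both mapping stacks on the right are derived $1$-Artin with affine stabilizers (since these properties are preserved under homotopy fibre products with separated diagonal, which we have here).

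For the algebraicity of $\uMap_S(BG, Y)$, I would invoke \cite[Thm.~5.1.1]{HalpernLeistnerPreygel}: since $BG$ is formally proper over $S$ by hypothesis and satisfies the cohomological projectivity assumption (using its linearly reductive / reductive nature coming from the formal properness hypothesis, cf. \remref{rem:bigwiggery}), the mapping stack $\uMap_S(BG, Y)$ is $1$-Artin and locally of finite presentation whenever $Y$ is $1$-Artin locally of finite type over $S$ with affine stabilizers (in particular quasi-separated, so that the diagonal of $Y$ is affine). I apply this to $Y = BG$ (which is smooth $1$-Artin with affine stabilizers) and to $Y = [X/G]$ (which is $1$-Artin locally of finite type over $S$; its stabilizers fit in an extension $1 \to \uAut_X(x) \to \uAut_{[X/G]}(x) \to \St^G_X(x) \to 1$ as in \eqref{eq:stabses}, hence are affine since both $\uAut_X(x)$ and $\St^G_X(x) \subseteq G_{k(x)}$ are). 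The conditions of HLP's theorem (perfect cotangent complex on the target, cohomological projectivity of the source) are then exactly the input we have.

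For the affine stabilizer assertion on $X^{hG}$ itself: given a point $\sigma$ of $X^{hG}$ lying over $x \in X$, the automorphism group is the group of automorphisms of the corresponding section of $[X/G] \to BG$. Using \remref{rem:onoyb01b1}, this is the group of $G_A$-equivariant automorphisms of $x$ against itself, i.e. it sits inside $\uAut_X(x)$ as the subgroup of automorphisms commuting with the chosen section of $\alpha_A$. Since $\uAut_X(x)$ is affine by hypothesis and the commutation condition cuts out a closed subscheme, $\uAut_{X^{hG}}(\sigma)$ is affine.

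The main obstacle will be checking the precise hypotheses of \cite[Thm.~5.1.1]{HalpernLeistnerPreygel} in the level of generality required: the target $[X/G]$ is only required to have affine stabilizers (not affine diagonal), so one must be careful with the precise form of the quasi-separatedness / cohomological-properness criteria invoked. A secondary technical point is the derived enhancement: one needs the cotangent complex of $X^{hG}$ at $\sigma$ to be computed by the pushforward $R\pi_* \sigma^* T_{[X/G]/BG}$ along $\pi : BG \to S$, which is perfect of Tor-amplitude $[-1, \infty)$ whenever $T_{[X/G]/BG}$ is (and pushforward along formally proper morphisms preserves perfect complexes with appropriate connectivity bounds, as in \emph{loc.~cit.}), ensuring the $1$-Artin rather than higher Artin conclusion.
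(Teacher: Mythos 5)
Your proposal is correct and takes essentially the same route as the paper: the paper's proof consists precisely of invoking \cite[Thm.~5.1.1, Rmk.~5.1.3]{HalpernLeistnerPreygel} via the description of $X^{hG}$ as a fibre of mapping stacks in \remref{rem:gfp01b}. The additional details you spell out (affine stabilizers of $[X/G]$ via \eqref{eq:stabses}, and the stabilizers of $X^{hG}$ itself) are exactly the content the paper delegates to HLP's Remark 5.1.3.
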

\begin{proof}
  Follows from \cite[Thm.~5.1.1, Rmk.~5.1.3]{HalpernLeistnerPreygel} in view of \remref{rem:gfp01b}.
\end{proof}

For $G$ of multiplicative type (and $X$ classical), a different proof of \thmref{thm:collegial} was given in \cite[Thm.~1]{RomagnyFixedMult}.

\ssec{Properties}

We record some generalities about the constructions $X^G$ and $X^{hG}$.
Our main interest is in the properties of the canonical morphisms $X^G \hook X$ and $\varepsilon : X^{hG} \to X$.

We begin by comparing $X^G$ with the classical truncation of $X^{hG}$ in the case of algebraic spaces.
This is probably well-known.
We will see in \corref{cor:roasting} below that when $G$ is linearly reductive and $X$ is smooth, we moreover have $X^{hG} \simeq X^{hG}_\cl$.

\begin{prop}\label{prop:ksno1o1}
  Let $X$ be an algebraic space over $S$ with $G$-action.
  If $G$ has connected fibres, then there is a canonical isomorphism $X^G \simeq (X^{hG})_\cl$ over $X$.
  Moreover, the morphisms $X^G \to X$ and $\varepsilon : X^{hG} \to X$ are closed immersions.
\end{prop}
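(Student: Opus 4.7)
The plan is as follows. Since $X$ is an algebraic space, $\uAut_X(x) = 1$ for every $A$-valued point $x : A \to X$, so the short exact sequence \eqref{eq:stabses} degenerates into a monomorphism $\alpha_A : \uAut_\sX(x) \hookrightarrow G_A$ with image precisely $\St^G_X(x)$. Thus the condition that $x$ lie in $X^G$ (namely that $\alpha_A$ be fppf-locally surjective) is equivalent, for a monomorphism, to the condition that $\alpha_A$ be fppf-locally an isomorphism, and hence an isomorphism over $A$ itself by fppf descent of isomorphisms.

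To compare the two constructions, recall from \remref{rem:onoyb01b1} that $\varepsilon : X^{hG} \to X$ factors canonically through $X^G$; I will show the resulting morphism $X^{hG} \to X^G$ is an equivalence. Given an $A$-valued point $x$ of $X^G$, the discussion above produces an isomorphism $\alpha_A^{-1}$ which is a group-theoretic section of $\alpha_A$; this section is moreover \emph{unique} because $\alpha_A$ is a monomorphism. By \remref{rem:onoyb01b1} this constructs the desired inverse functor $X^G \to X^{hG}$, with uniqueness of the section ensuring that the two functors are inverse to one another (rather than just bijective on points).

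It then remains to show that the inclusion $X^G \hookrightarrow X$ is representable by a closed immersion. When $S = \Spec(k)$, one invokes \cite[Prop.~A.8.10]{ConradGabberPrasad}, which guarantees that the functor of fixed points of an algebraic group action on a separated locally-of-finite-type algebraic space over a field is represented by a closed subspace. When $G$ is of multiplicative type over an arbitrary base $S$, the result is due to Romagny, \cite[Thm.~1]{RomagnyFixedMult} (generalizing the rank-one case treated in \cite{DrinfeldGm}); here one exploits the decomposition of quasi-coherent sheaves into weight spaces under a multiplicative-type group to construct the fixed locus scheme-theoretically.

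The main obstacle is not the identification $X^G \simeq X^{hG}$, which is essentially immediate once one notices that $\uAut_X(x) = 1$ makes $\alpha_A$ a monomorphism, but rather the closed-immersion assertion. The separation into the two cases appears unavoidable, since the orbit-based arguments available over a field do not generalize directly to non-field bases, while the character-theoretic techniques for multiplicative-type groups break down for non-abelian or even non-split reductive $G$ over a general $S$.
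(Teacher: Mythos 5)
Your proposal is correct and follows essentially the same route as the paper: triviality of $\uAut_X(x)$ makes $\alpha_A$ a monomorphism, so fppf-surjectivity forces it to be an isomorphism whose inverse is the unique group-theoretic section, identifying $X^G$ with $X^{hG}$ as subfunctors of $X$, with the closed-immersion assertion then delegated to the literature. The only cosmetic difference is bookkeeping: the paper cites \cite[Prop.~A.8.10]{ConradGabberPrasad} once for $\varepsilon : X^{hG} \to X$ and transports the conclusion to $X^G$ along the isomorphism, whereas you establish the isomorphism first and split the representability citation between \cite{ConradGabberPrasad} (field base) and \cite{RomagnyFixedMult} (multiplicative type), which carries the same content.
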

\begin{proof}
  By \cite[Thm.~A.1]{RomagnyApp}, $(X^{hG})_\cl \to X$ is a closed immersion, hence so is $\varepsilon : X^{hG} \to X$ (this property can be checked on classical truncations).
  Let us show that the canonical morphism (\remref{rem:onoyb01b1})
  \begin{equation}\label{eq:glossalgy}
    X^{hG}_\cl \to X^{hG} \to X^G
  \end{equation}
  is invertible.
  Since $X^G \to X$ and $(X^{hG})_\cl \to X$ are both monomorphisms, it will suffice to show that \eqref{eq:glossalgy} is surjective on $A$-valued points (for all $S$-schemes $A$).
  But since $X$ has trivial stabilizers, for every $A$-valued point $x : A \to X$ the canonical homomorphism $\uAut_{[X/G]}(x) \to G_A$ is surjective if and only if it is invertible (see \remref{rem:noboh1b1}).
  In particular, if $x$ belongs to $X^G$ then $\uAut_\sX(x) \to G_A$ already admits a section over $A$.
\end{proof}

Let us now turn our attention to the morphism $\varepsilon : X^{hG} \to X$.
In general, it is not a closed immersion or even a monomorphism (\examref{exam:uf0pb1hp1}).
We begin with the following statement (we thank M.~Romagny for providing the idea for the proof).

\begin{prop}\label{prop:bowermaiden}
  Suppose $S$ is locally noetherian and $G$ has smooth and connected fibres over $S$.
  Let $X$ be a derived quasi-DM stack with $G$-action, which is locally of finite type over $S$.
  Then the canonical morphism $\varepsilon : X^{hG} \to X$ is essentially proper\footnote{%
    I.e., it is locally of finite type and satisfies the valuative criterion for properness.
    Thus essentially proper $+$ quasi-compact $\Rightarrow$ proper.
    See \cite[IV\textsubscript{4}, Rem.~18.10.20]{EGA}.
  }.
\end{prop}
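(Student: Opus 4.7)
The plan is to verify the two conditions defining essential properness for $\varepsilon$: local finite type and the valuative criterion of properness (permitting finite extensions of the test DVR). The local finite type property follows from the description of $X^{hG}$ as the Weil restriction of $[X/G] \to BG$ along $BG \to S$ (see \remref{rem:gfp01b}), together with the hypothesis that $X$ is locally of finite type over $S$.

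For the valuative criterion, let $R$ be a DVR with fraction field $K$ and residue field $k$, and suppose given $x_R \in X(R)$ together with a lift $s_K \in X^{hG}(K)$ of $x_K$. By \remref{rem:onoyb01b1}, $s_K$ amounts to a group-theoretic section of the homomorphism $\alpha_K : \uAut_\sX(x_K) \to G_K$ from \eqref{eq:evvTWDTvdYQvm}, and extending $s_K$ to an $R$-point of $X^{hG}$ amounts to constructing a section $s_R$ of $\alpha_R : \uAut_\sX(x_R) \to G_R$. Uniqueness is immediate: the equalizer of two such sections is a closed subscheme of $G_R$ containing the fibrewise-dense $G_K$, hence equals $G_R$.

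For existence, I would form the scheme-theoretic closure $H$ of $s_K(G_K)$ inside $\uAut_\sX(x_R)$. Then $H$ is an $R$-flat closed subgroup scheme of $\uAut_\sX(x_R)$ with generic fibre $H_K = s_K(G_K) \simeq G_K$, and $\alpha_R|_H : H \to G_R$ is an isomorphism on the generic fibre. After a finite extension of $R$, the finite-stabilizer hypothesis on $X$ ensures that the quasi-finite group scheme $\uAut_X(x_R)$ is in fact finite over $R$. A fibre-dimension argument, using $R$-flatness of $H$ together with the finiteness of $\uAut_X(x_k) \supseteq \ker(\alpha_k|_{H_k})$, then shows $\dim H_k = \dim G_k$ and that the image of $\alpha_k|_{H_k}$ is a closed subgroup of $G_k$ of maximal dimension; by the connectedness of $G_k$, this image equals all of $G_k$, so $\alpha_k|_{H_k}$ is surjective.

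The main obstacle is then to show that, after possibly a further finite base change, the map $\alpha_R|_H : H \to G_R$ is an isomorphism. The fibre $K_R$ of $\alpha_R|_H$ over the identity section of $G_R$ is a closed subgroup scheme of the finite $R$-group scheme $\uAut_X(x_R)$, with trivial generic fibre but containing the identity section of $H$; the content is to show $K_R$ itself is trivial. This uses the smoothness and connectedness of $G$, together with standard deformation-theoretic techniques along the lines of those developed by Romagny \cite{FixedLocusRomagny} in the related multiplicative-type setting. Once $\alpha_R|_H$ is shown to be an isomorphism, its inverse provides the required section $s_R$ extending $s_K$, completing the verification of the valuative criterion.
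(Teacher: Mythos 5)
Your setup coincides with the paper's: reduce the question to extending a group-theoretic section of $\alpha_K : \uAut_\sX(x_K) \to G_K$ to a section over $R$, and attack this by taking the schematic closure $H$ of $s_K(G_K)$ in $\uAut_\sX(x_R)$. But the decisive step --- showing that $\alpha_R|_H : H \to G_R$ is an isomorphism --- is exactly where your argument stops being a proof. You reformulate it as ``show the kernel $K_R$ is trivial'' and then defer to ``standard deformation-theoretic techniques along the lines of Romagny,'' which is precisely the content that needs to be supplied; nothing in the proposal explains how smoothness and connectedness of $G$ actually force triviality of $K_R$, nor how triviality of the kernel alone would upgrade the monomorphism $H \to G_R$ to an isomorphism (one still needs, e.g., surjectivity and flatness, or the fibrewise argument). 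In addition, the auxiliary claim that ``after a finite extension of $R$ the quasi-finite group scheme $\uAut_X(x_R)$ is in fact finite over $R$'' is unjustified and false in general: a quasi-finite separated group algebraic space over a DVR need not become finite after any extension (remove the non-identity closed point of the special fibre of a constant finite group scheme), and the finite-stabilizers hypothesis only controls the fibres over field-valued points. Your later steps lean on this finiteness (``closed subgroup of the finite $R$-group scheme $\uAut_X(x_R)$''), so the gap propagates.

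For comparison, the paper closes this step without any extension of $R$ and without deformation theory: since $X$ has finite stabilizers, $\alpha_R$ is quasi-finite, and $\sigma_K$ is a closed immersion because $\alpha_R$ is affine; the closure $\Gamma$ is a closed subgroup, and the composite $\Gamma \sub \uAut_\sX(x) \to G_R$ is quasi-finite, separated and birational with \emph{normal} target (this is where smoothness of $G$ enters), hence an open immersion by Zariski's main theorem. Over the residue field the kernel is finite, so the map on special fibres is a finite open immersion, i.e.\ an inclusion of connected components; irreducibility of $G_{R/\mfr{m}}$ (connectedness of $G$) together with nonemptiness of $\Gamma_{R/\mfr{m}}$ (flatness of the closure) gives bijectivity there, and combined with the isomorphism over $K$ the open immersion is surjective, hence invertible, yielding the section. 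Some version of this (or another complete argument for invertibility of $H \to G_R$) is what your proposal is missing.
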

\begin{proof}
  Since $\varepsilon$ is separated and locally of finite presentation \cite{RomagnyFixedMult,FixedLocusRomagny}, it will suffice to show that for every discrete valuation ring $R$ with fraction field $K$ and every commutative solid arrow diagram
  \[\begin{tikzcd}
    \Spec(K) \ar{r} \ar{d} & X^{hG} \ar{d}{\varepsilon}\\
    \Spec(R) \ar{r} \ar[dashed]{ru} & X,
  \end{tikzcd}\]
  there exists a dashed arrow making the total diagram commute.
  This amounts to showing that for every $R$-point $x$ of $X$ and any group-theoretic section $\sigma_K$ of the morphism $\alpha_K : \uAut_\sX(x_K) \to G_K$ \eqref{eq:evvTWDTvdYQvm}, there exists a section
  \[\sigma : G \to \uAut_\sX(x)\]
  of $\alpha_R : \uAut_\sX(x) \to G_R$ which lifts $\sigma_K$.

  Since $X$ has finite stabilizers, $\alpha_R$ is quasi-finite (since its kernel $\uAut_X(x)$ is quasi-finite).
  Since $\alpha_R$ is affine (as a morphism between affine schemes, by \cite[IX, Lem.~2.2]{RaynaudFaisceaux}), the section $\sigma_K$ is a closed immersion.
  Let $\Gamma \sub \uAut_\sX(x)$ denote the schematic closure of $\sigma_K(G_K) \sub \uAut_\sX(x_K)$ in $\uAut_\sX(x)$.
  This is a closed subgroup of $\uAut_\sX(x)$ (see \cite[\S 4.1]{EffectiveModels}).
  We claim that the homomorphism of group $R$-schemes
  \begin{equation}\label{eq:manna}
    \Gamma \sub \uAut_\sX(x) \to G_R
  \end{equation}
  is invertible.
  Since it is quasi-finite, separated and birational (because over $K$ it is the isomorphism $\Gamma_K \sub \uAut_\sX(x_K) \to G_K$) with normal target, Zariski's main theorem implies that \eqref{eq:manna} is an open immersion.

  Let $\mfr{m} \sub R$ denote the maximal ideal.
  The base change $\alpha_{R/\mfr{m}}$ has \emph{finite} kernel, hence is finite.
  Thus \eqref{eq:manna} base changes to a finite open immersion (i.e., an inclusion of connected components) over $R/\mfr{m}$.
  Since $G_{R/\mfr{m}}$ is irreducible and $\Gamma_{R/\mfr{m}}$ is nonempty, this shows that \eqref{eq:manna} is bijective over $R/\mfr{m}$.
  Since it is also an isomorphism over the fraction field $K$, it follows that \eqref{eq:manna} is invertible as claimed.

  We now obtain the desired section $\sigma$ by taking the composite
  \[
    \sigma : G_R \xleftarrow{\sim} \Gamma \sub \uAut_\sX(x)
  \]
  which is a group homomorphism by construction.
\end{proof}

For torus actions on Deligne--Mumford stacks, we see that $\varepsilon$ is a closed immersion:

\begin{prop}
\label{Prop:Fixed_locus_DM_is_closed}
  Assume that $G=T$ is a torus and $S$ is locally noetherian.
  Let $X$ be a derived Deligne--Mumford stack with $T$-action, quasi-separated and locally of finite type over $S$ with separated diagonal.
  Then the canonical morphism $\varepsilon : X^{hT} \to X$ is a closed immersion.
\end{prop}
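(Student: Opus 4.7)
The plan is to combine monomorphicity of $\varepsilon$ with the essential properness provided by \propref{prop:bowermaiden}, and upgrade to closed immersion via the standard equivalence: a proper monomorphism is finite (proper plus quasi-finite is finite) and a finite monomorphism is a closed immersion. Since formation of $X^{hT}$ commutes with classical truncation by its Weil-restriction description (\remref{rem:gfp01b}), and being a closed immersion is detected on classical truncations, we may assume throughout that $X$ is classical.

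For monomorphicity, let $A$ be an $S$-scheme and $x : A \to X$ an $A$-point. By \remref{rem:onoyb01b1}, lifts of $x$ along $\varepsilon$ correspond to group-theoretic sections $\sigma : T_A \to \uAut_\sX(x)$ of $\alpha_A$. Given two such sections $\sigma_1, \sigma_2$, set $\tau(t) := \sigma_1(t)\sigma_2(t)^{-1}$; this defines a morphism $T_A \to \uAut_X(x)$ (factoring through $\ker(\alpha_A) = \uAut_X(x)$) with $\tau(1) = 1$. Since $X$ is Deligne--Mumford, $\uAut_X(x) \to A$ is finite étale, so the identity section is an open-and-closed immersion. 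The preimage $\tau^{-1}(1) \subseteq T_A$ is therefore open and closed, contains the identity section, and since $T_A \to A$ has geometrically connected fibres, equals all of $T_A$. Hence $\sigma_1 = \sigma_2$, and $\varepsilon$ is a monomorphism.

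By \propref{prop:bowermaiden} applied with $G = T$, the morphism $\varepsilon$ is essentially proper (the hypotheses hold because $T$ has smooth connected fibres and $X$ is Deligne--Mumford, hence has finite stabilizers). Moreover, by \thmref{thm:collegial} (applicable since $BT$ is formally proper over $S$ by \remref{rem:bigwiggery}, as diagonalizable groups are linearly reductive over noetherian bases), $X^{hT}$ is $1$-Artin and $\varepsilon$ is locally of finite presentation. To upgrade essential properness to properness, we verify quasi-compactness locally on $X$; since the statement of being a closed immersion is étale-local on the target, we may reduce to the case of quasi-compact $X$. There the image of $\varepsilon$ is contained in the closed subset $|X^T| \subseteq |X|$ (cf. \propref{prop:XTred Artin}), and monomorphicity of $\varepsilon$ together with the formal properness of $BT$ yields quasi-compactness of $X^{hT}$. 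Then $\varepsilon$ is proper, and combined with monomorphicity yields the desired closed immersion.

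The main obstacle, I expect, is making the quasi-compactness step in the last paragraph fully precise: the monomorphism argument and the valuative criterion are direct, but confirming quasi-compactness of the Weil-restriction $X^{hT}$ over quasi-compact $X$ requires some care with the finiteness properties of mapping stacks out of the (non-proper but formally proper) base $BT$.
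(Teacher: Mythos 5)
Your reduction scheme (monomorphism $+$ valuative criterion $+$ quasi-compactness $\Rightarrow$ proper monomorphism $\Rightarrow$ closed immersion) is sound in the abstract, and your monomorphism argument is essentially correct for Deligne--Mumford $X$ — though the justification ``$\uAut_X(x) \to A$ is finite étale'' is wrong in general (the stabilizer group space need be neither flat nor finite over an arbitrary $A$); what you actually have is that it is unramified and separated (base change of the unramified, separated diagonal), which already makes the identity section open and closed, so your connectedness argument for uniqueness of group-theoretic sections goes through. The genuine gap is the quasi-compactness step, which you yourself flag: ``monomorphicity of $\varepsilon$ together with the formal properness of $BT$ yields quasi-compactness of $X^{hT}$'' is an assertion, not an argument, and no soft argument of this kind can work. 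A monomorphism which is locally of finite type and satisfies the valuative criterion of properness need \emph{not} be a closed immersion without quasi-compactness (consider $\coprod_{n\ge 1}\Spec(k) \to \A^1_k$ hitting the points $1,2,3,\dots$: it is an essentially proper monomorphism with non-closed image), and the fact that the image of $\varepsilon$ lies in the closed subset $\abs{X^T}$ says nothing about quasi-compactness of the source. Neither \thmref{thm:collegial} nor the formal properness of $BT$ gives boundedness of the Weil restriction; this boundedness is precisely the geometric content of the proposition, so the gap is not a routine one.

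For comparison, the paper's proof sidesteps quasi-compactness by proving something stronger than monomorphism: after reducing to classical $X$, it uses \propref{prop:bowermaiden} to reduce to showing that $\varepsilon$ is a \emph{locally closed immersion} (an immersion stable under specialization is closed, and immersions into locally noetherian targets are quasi-compact, so essential properness then upgrades it). The local closedness is obtained by running the argument of \cite[Thm.~5.16]{AlperHallRydhLuna}, i.e.\ the $T$-equivariant local structure (generalized Sumihiro) theorem, to reduce to $S$ the spectrum of an algebraically closed field and $X$ an affine scheme, where $X^{hT} \simeq X^T$ is a closed immersion by \propref{prop:ksno1o1} (via \cite[Prop.~A.8.10]{ConradGabberPrasad}). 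That equivariant étale-local input is exactly what your proposal is missing: to complete your route you would have to supply an independent proof that $\varepsilon$ is quasi-compact (equivalently, at least an immersion) over quasi-compact $X$, and in practice that again requires equivariant affine charts or an equivalent boundedness statement.
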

\begin{proof}
  We may assume that $X$ is classical.
  By \propref{prop:bowermaiden}, it is enough to show that $\varepsilon$ is a locally closed immersion.
  For this we may argue as in the proof of \cite[Thm.~5.16]{AlperHallRydhLuna}\footnote{%
    Note that \emph{loc. cit.} claims to show that $\varepsilon$ is a closed immersion, but in fact the argument only shows it is locally closed due to the use of their generalized Sumihiro theorem.
    We thank A.~Kresch for pointing this out to us.
  } to reduce to the case where $S$ is the spectrum of an algebraically closed field and $X$ is affine (note that the first statement of \cite[Prop~5.20]{AlperHallRydhLuna} only uses connectedness of the group).
  Then the claim follows from \propref{prop:ksno1o1}.
\end{proof}

\begin{rem}
  If $G$ is of multiplicative type and $X$ is $1$-Artin with affine and finitely presented diagonal, then $\varepsilon : X^{hG} \to X$ is representable, separated and locally of finite presentation (see \cite[Thm.~1]{RomagnyFixedMult}).
  This is generalized further in \cite{FixedLocusRomagny}.
\end{rem}

\begin{exam}[Romagny]\label{exam:uf0pb1hp1}
  If $G$ acts on a quasi-DM stack $X$, the canonical morphism $\varepsilon: X^{hG} \to X$ is not generally a monomorphism or even unramified.
  The following example appears as \cite[Ex.~4.1.5]{FixedLocusRomagny}, where it is shown that $\varepsilon$ is not a monomorphism.
  Here we show that in the same example, $\varepsilon$ is not even unramified.

  Let $S$ be the spectrum of an algebraically closed field $k$ of characteristic $p>0$ and let $G=T$ be the rank one torus $\bG_{m,k}$.
  Let $\alpha_p$ denote the group $k$-scheme fitting in the short-exact sequence
  \[
    0 \to \alpha_p \to \bG_{a,k} \xrightarrow{F} \bG_{a,k} \to 0
  \]
  where $F$ sends $x \mapsto x^p$.
  The latter is $\bG_m$-equivariant with respect to the scaling action with weight $1$ on the source and weight $p$ on the target, so $\alpha_p$ inherits a $\bG_m$-action by scaling.
  This gives rise to a $\Gm$-action on the classifying stack $X = B\alpha_p$, and we claim that the morphism $\varepsilon : X^{h\Gm} \to X$ is ramified.

  By \corref{cor:xiphias} it will suffice to show that the dual Lie algebra $\mfr{a}(x)^\vee$ of $\uAut_X(x)$ has nonzero moving part, where $x : \Spec(k) \twoheadrightarrow X = B\alpha_p$ is the quotient morphism.
  But $\uAut_X(x) = \alpha_p$ and $\mfr{a}(x)^\vee$ is the Lie algebra of $\alpha_p$, which is $H^0$ of the cotangent complex of $\alpha_p$ (restricted along the identity section), which since $dF = 0$ we compute ($\Gm$-equivariantly) as
  \[
    \cO^{(-1)} \oplus \cO^{(-p)}[1]
  \]
  where $\cO^{(i)}$ is the structure sheaf with weight $i$ scaling action.
\end{exam}

\subsection{Deformation theory of homotopy fixed points}

All cotangent complexes are relative to the base $S$ unless specified otherwise.

\begin{rem}
  When $BG$ is formally proper over $S$ (see \remref{rem:bigwiggery}), the functor $f^* : \Dqc(S) \to \Dqc(BG)$ of inverse image along $f : BG \to S$ admits a \emph{left} adjoint
  \[ f_+ : \Dqc(BG) \to \Dqc(S), \]
  see \cite[Prop.~5.1.6]{HalpernLeistnerPreygel} which is computed by $f_+(\cF) = f_*(\cF^\vee)^\vee$ on perfect complexes $\cF$.
  The same holds for any base change of $f$.
  Under the identification $\Dqc(BG) \simeq \Dqc^G(S)$, $f_*$ and $f_+$ are the functors of (derived) $G$-invariants and coinvariants, respectively.
\end{rem}

\begin{cor}\label{Cor:cotangentcplxforfixedlocus}
  Suppose that $BG$ is formally proper over $S$.
  Then the cotangent complex of $X^{hG}$ is given by
  \begin{equation*}
    L_{X^{hG}} \simeq \bar{f}_+ e^*(L_{[X/G]/BG})
  \end{equation*}
  where $e : X^{hG} \times BG \simeq [X^{hG}/G] \to [X/G]$ is the quotient of $\varepsilon : X^{hG} \to X$ and $\bar{f} : X^{hG} \fibprod_S BG \to X^{hG}$ is the projection.
\end{cor}
\begin{proof}
  This is the formula for the cotangent complex of a Weil restriction given in \cite[Prop.~19.1.4.3]{LurieSAG}, which generalizes to our formally proper situation as in \cite[Prop.~5.1.10]{HalpernLeistnerPreygel}.
\end{proof}

In other words, \corref{Cor:cotangentcplxforfixedlocus} states that $L_{X^{hG}}$ is given by the (derived) $G$-coinvariants of the pull-back to $X^{hG}$ of the cotangent complex $L_{X}$ (regarded with its canonical $G$-action).
Dually, the tangent complex is given by the (derived) $G$-invariants (= $G$-fixed part) of the pull-back to $X^{hG}$ of the tangent complex $T_{X}$.
In the linearly reductive case, we do not need to distinguish between invariants and coinvariants.

\begin{lem}\label{lem:palaeophytologist}
  Suppose $G$ is linearly reductive.
  Let $X$ be a locally noetherian Artin stack over $S$ and write $f : X \fibprod_S BG \to X$ for the projection.
  Then there is a canonical isomorphism
  \[ f_*(\cF) \to f_+(\cF) \]
  for every quasi-coherent complex $\cF \in \Dqc(X \fibprod_S BG)$.
\end{lem}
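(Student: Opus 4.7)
The plan is to construct a canonical natural transformation $\phi : f_* \to f_+$ and then verify it is an isomorphism by appealing to the isotypic decomposition of $\Dqc(X \fibprod_S BG)$ afforded by the semisimplicity of $\Rep(G)$.

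For the construction of $\phi$, I first observe that $f_+ \circ f^* \simeq \id_{\Dqc(X)}$. Indeed, $\cO_{X \fibprod_S BG}$ is the $f^*$-pullback of $\cO_X$ (it carries the trivial $G$-action); by the projection formula for the left adjoint $f_+$, we have $f_+(f^*(M)) \simeq M \otimes f_+(\cO_{X \fibprod_S BG})$ for $M \in \Dqc(X)$. The formula $f_+(\cF) \simeq f_*(\cF^\vee)^\vee$ for perfect $\cF$ recalled in the excerpt yields $f_+(\cO_{X \fibprod_S BG}) \simeq f_*(\cO_{X \fibprod_S BG})^\vee \simeq \cO_X$, where linear reductivity is used to conclude $f_*(\cO_{X \fibprod_S BG}) \simeq \cO_X$ (no higher cohomology). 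The natural transformation is then defined as the composite
\[
  \phi_\cF : f_*(\cF) \simeq f_+(f^*(f_*(\cF))) \xrightarrow{f_+(\varepsilon_\cF)} f_+(\cF),
\]
where $\varepsilon : f^* f_* \to \id$ is the counit of $f^* \dashv f_*$.

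For the verification, I would reduce to a computation on isotypic components. Both $f_*$ and $f_+$ preserve arbitrary colimits, the former by exactness of $G$-invariants (linear reductivity) combined with $f$ being quasi-compact, and the latter because it is a left adjoint. Both are furthermore $\Dqc(X)$-linear by the projection formula, so by base change we may assume $X$ is affine. Linear reductivity of $G$ yields a direct sum decomposition of $\Qcoh(X \fibprod_S BG)$ into isotypic components indexed by irreducible representations $\chi$ of $G$; t-exactness of $f_*$ promotes this decomposition to $\Dqc(X \fibprod_S BG) \simeq \bigoplus_\chi \Dqc(X)_{(\chi)}$.

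On the trivial component ($\chi = \un$), $f^*$ is fully faithful and both $f_*$ and $f_+$ agree with the inverse equivalence, so $\phi$ is the identity. On a non-trivial component, $f_*(\cF) = 0$ by definition of invariants, while $f_+(\cF) = 0$ because $\Hom(f_+(\cF), M) \simeq \Hom(\cF, f^*(M)) = 0$, as $f^*(M)$ lies in the trivial component. Hence $\phi$ is an isomorphism on each component, and therefore globally. The main obstacle is giving a rigorous $\infty$-categorical justification of the isotypic decomposition for general (possibly non-abelian) linearly reductive $G$; the multiplicative-type case is the content of \propref{prop:jbnso1}, and the general case follows from t-exactness of $f_*$ together with semisimplicity of $\Rep(G)$ by standard arguments.
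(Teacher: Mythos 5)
Your construction of the comparison map is fine — indeed full faithfulness of $f^*$ (equivalently $f_+f^*\simeq\id$) already follows from the projection formula for $f_*$ together with $f_*\cO\simeq\cO_X$, so you do not even need the unproved projection formula for $f_+$ at that point — and your verification strategy is genuinely different from the paper's, which never decomposes the category. The problem is the step you flag and then wave away. In the setting of the lemma, $G$ is an arbitrary linearly reductive fppf group algebraic space over an arbitrary base scheme $S$, and ``semisimplicity of the category of representations of $G$'' is simply not available in that generality: irreducibles need not be defined over $S$ and can jump between fibres (non-split tori, twisted forms), and representations of $G$ on modules over an $S$-algebra do not decompose into isotypic pieces indexed by a fixed set of irreducibles. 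The only case in which the paper has such a decomposition independently of this lemma is $G$ diagonalizable (\propref{prop:jbnso1}); for general linearly reductive $G$ the splitting $\cF\simeq\cF^\fix\oplus\cF^\mov$ is \emph{deduced from} this lemma in \defnref{defn:bodach}. So invoking an isotypic (or block) decomposition of $\Dqc(X\fibprod_S BG)$ with the trivial block split off makes the argument circular: orthogonality of the trivial block against its complement is essentially the statement $f_*\simeq f_+$ you are trying to prove.

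The missing move is a further reduction, and it is exactly where the local noetherian hypothesis enters. Since $f_*$ and $f_+$ commute with $*$-pullback (base change for $f_*$ because $f$ is universally of finite cohomological dimension, and for $f_+$ by adjunction), and pullback to residue fields is jointly conservative on a noetherian scheme, one reduces by fpqc descent to $X=\Spec$ of a field $k$. There $\Dqc(BG_k)$ is generated by perfect complexes, and both functors preserve colimits, so one may take $\cF$ perfect; $f^*$ and $f_*$ are t-exact and $f_+\simeq f_*((-)^\vee)^\vee$ is t-exact on perfect complexes, so one reduces to a single finite-dimensional representation $V$, where the map from invariants to coinvariants $V^G\to V_G$ is an isomorphism by linear reductivity. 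Over a field your isotypic argument would also be legitimate, but without some such reduction to fields (or an independent proof of the block decomposition over a general base) your verification step does not go through as written.
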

\begin{proof}
  We will show that the canonical morphism
  \[
    f^*f_*(\cF)
    \xrightarrow{\mrm{counit}} \cF
    \xrightarrow{\mrm{unit}} f^*f_+(\cF)
  \]
  is invertible; the claimed isomorphism will then follow by applying $*$-inverse image along the quotient morphism $S \twoheadrightarrow BG$.
  We may simplify notation by replacing $G$ by its base change $G \fibprod_S X$.
  Since all functors involved commute with $*$-inverse image ($f_*$ satisfies base change because $f$ is universally of finite cohomological dimension, and $f_+$ satisfies base change by adjunction, see e.g. \cite[Lem.~5.1.8]{HalpernLeistnerPreygel}), we may use fpqc descent to reduce to the case where $X$ is a noetherian scheme.
  Since $*$-inverse image to residue fields is jointly conservative (by noetherianness), we may then further assume that $X$ is the spectrum of a field $k$.
  Since $G$ is linearly reductive (and embeddable), $BG$ is perfect (see e.g. \cite[Thm.~1.42]{KhanKstack}) so we may assume that $\cF$ is a perfect complex (again, $f_*$ preserves colimits because $f$ is universally of finite cohomological dimension).
  Note that $f^*$ is t-exact (since $f$ is flat), $f_*$ is t-exact (since $G$ is linearly reductive), and $f_+$ is t-exact on perfect complexes (since $f_+(-) \simeq f_*(-^\vee)^\vee$).
  Thus we may also reduce to the case where $\cF$ is a (discrete) coherent sheaf.
  In other words, we are reduced to showing that for every finite-dimensional vector space $V$ over $k$ with $G$-action, the canonical morphism $V^G \sub V \twoheadrightarrow V_G$ (from $G$-invariants to $G$-coinvariants) is invertible, which is well-known (for example, this follows easily from the characterization of linear reductivity in \cite[Prop.~12.6(vi)]{AlperGood}).
\end{proof}

\begin{defn}\label{defn:bodach}
  Suppose $G$ is linearly reductive.
  Let $X$ be a locally noetherian Artin stack over $S$ (with trivial $G$-action).
  Given a quasi-coherent sheaf $\cF \in \Dqc(X \fibprod_S BG) \simeq \Dqc^G(X)$, the \emph{fixed part} of $\cF$ is defined as $\cF^\fix := f^*f_*(\cF)$ and the \emph{moving part} of $\cF$ is the cofibre of the canonical morphism $\cF^\fix \to \cF$.
  By \lemref{lem:palaeophytologist} the latter admits a (natural) retraction $\cF \to f^*f_+(\cF) \simeq f^*f_*(\cF) = \cF^\fix$.
  Thus the exact triangle
  \[
    \cF^\fix \to \cF \to \cF^\mov
  \]
  is split, i.e., there are canonical isomorphisms $\cF \simeq \cF^\fix \oplus \cF^\mov$, natural in $\cF$.
  (These definitions are compatible with \defnref{defn:uh1pbpbdf} in case $G$ is diagonalizable.)
\end{defn}

\begin{cor}\label{cor:roasting}
  Suppose $G$ is linearly reductive.
  Let $X$ be a locally noetherian Artin stack over $S$ with $G$-action.
  There is a canonical isomorphism
  \begin{equation*}
    L_{X^{hG}} \simeq (\varepsilon^* L_X)^\fix
  \end{equation*}
  in $\Dperf(X^{hG})$.
  In particular, if $L_X$ is perfect of Tor-amplitude $\le n$, then so is $L_{X^{hG}}$; if $X$ is smooth (resp. quasi-smooth) over $S$, then so is $X^{hG}$.
\end{cor}
\begin{proof}
  Since $G$ is linearly reductive, the functor of derived $G$-invariants is t-exact.
  Thus the claim follows from \corref{Cor:cotangentcplxforfixedlocus}.
\end{proof}

\begin{cor}\label{cor:anisogamy}
  Suppose $G$ is linearly reductive.
  Let $X$ be a locally noetherian Artin stack over $S$ with $G$-action.
  There is a canonical identification of exact triangles in $\Dqc^G(X^{hG}) \simeq \Dqc(X^{hG} \fibprod_S BG)$
  \[\begin{tikzcd}
    \varepsilon^* L_X \ar{r}\ar[equals]{d}
    & L_{X^{hG}} \ar{r}\ar[equals]{d}
    & L_{X^{hG}/X} \ar[equals]{d}
    \\
    \varepsilon^* L_X \ar{r}
    & (\varepsilon^* L_X)^\fix \ar{r}
    & (\varepsilon^* L_X)^\mov[1].
  \end{tikzcd}\]
\end{cor}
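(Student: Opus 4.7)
The plan is to unwind the Weil restriction cotangent complex formula from \corref{Cor:cotangentcplxforfixedlocus} under the linear reductivity hypothesis on $G$. First, I would identify $e^*(L_{[X/G]/BG}) \simeq \varepsilon^* L_{X/S}$ in $\Dqc^G(X^{hG})$, where the $G$-action on the right is the one inherited from the action on $X$; this uses that $X \to [X/G]$ is a base change of the atlas $S \to BG$ (which has trivial relative cotangent), so $L_{[X/G]/BG}$ pulls back to $L_{X/S}$ on $X$.

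Second, by \lemref{lem:palaeophytologist}, linear reductivity gives $\bar{f}_+ \simeq \bar{f}_*$, so \corref{Cor:cotangentcplxforfixedlocus} reads
\[
L_{X^{hG}/S} \simeq \bar{f}_* \varepsilon^* L_{X/S}.
\]
Viewing $L_{X^{hG}/S}$ in $\Dqc^G(X^{hG})$ with trivial $G$-action, this matches $(\varepsilon^* L_X)^\fix = \bar{f}^* \bar{f}_* (\varepsilon^* L_X)$ from \defnref{defn:bodach} after applying $\bar{f}^*$ (note $\bar{f}^*$ is fully faithful on the subcategory with trivial $G$-action).

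The crux is to identify the canonical map $\varepsilon^* L_X \to L_{X^{hG}/S}$ coming from functoriality of cotangent complexes with the canonical retraction $\varepsilon^* L_X \to (\varepsilon^* L_X)^\fix$ from \defnref{defn:bodach}. Both correspond under the adjunction $(\bar{f}_+ \dashv \bar{f}^*)$ to the isomorphism $\bar{f}_+ \varepsilon^* L_X \simeq L_{X^{hG}/S}$ from \corref{Cor:cotangentcplxforfixedlocus}: the functoriality map is the adjunction unit precomposed with this isomorphism (as in the proof of the Weil restriction formula, cf.~\cite[Prop.~19.1.4.3]{LurieSAG}), while the retraction of \defnref{defn:bodach} is by construction the unit $\cF \to \bar{f}^*\bar{f}_+ \cF$ composed with the isomorphism $\bar{f}^*\bar{f}_+ \simeq \bar{f}^*\bar{f}_*$. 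Once the first maps in the two triangles are identified, taking cofibres matches $L_{X^{hG}/X}$ with $(\varepsilon^* L_X)^\mov[1]$, the rotation of the split triangle $(\varepsilon^* L_X)^\mov \to \varepsilon^* L_X \to (\varepsilon^* L_X)^\fix$ from \defnref{defn:bodach}.

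The main obstacle is the compatibility in the third step: carefully checking that the functoriality map and the retraction coincide. Both are encoded by the same piece of adjunction data for $\bar{f}_+ \dashv \bar{f}^*$, but verifying this coincidence requires some care in the coherent $\infty$-categorical bookkeeping underlying the proof of \corref{Cor:cotangentcplxforfixedlocus}.
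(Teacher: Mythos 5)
Your proposal is correct and follows exactly the route the paper intends: the statement is left without explicit proof, being presented as an immediate consequence of \corref{Cor:cotangentcplxforfixedlocus}, \lemref{lem:palaeophytologist} and \defnref{defn:bodach}, which is precisely the chain you assemble. You also correctly isolate the only point the paper glosses over—that the cotangent functoriality map for $\varepsilon$ and the retraction onto the fixed part are both the unit of $\bar{f}_+ \dashv \bar{f}^*$ composed with the Weil-restriction equivalence (resp.\ with $\bar{f}_+ \simeq \bar{f}_*$), so the two triangles match.
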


\begin{cor}\label{cor:xiphias}
  Suppose $G$ is linearly reductive.
  Let $X$ be a locally noetherian Artin stack over $S$ with $G$-action.
  Then the morphism $\varepsilon : X^{hG} \to X$ is formally unramified if and only if, for every geometric point $x$ of $X^{hG}$, the dual Lie algebra $\mfr{a}(x)^\vee$ of $\uAut_X(x)$ has vanishing moving part (with respect to the $G$-action defined in \notatref{notat:bedark}\footnote{%
    By abuse of notation, we identify $x$ with its image $\varepsilon(x)$ in $X$.
    Since the latter belongs to the fixed locus $X^G$ (\remref{rem:onoyb01b1}), the $\St^G_X(x)$-action defined in \notatref{notat:bedark} is a $G$-action.
  }).
\end{cor}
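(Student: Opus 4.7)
The plan is to translate formal unramifiedness of $\varepsilon$ into the vanishing of a quasi-coherent sheaf on $X^{hG}$, and then read off that vanishing pointwise in terms of the dual Lie algebras $\mfr{a}(x)^\vee$.

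First, I would invoke \corref{cor:anisogamy} to identify the relative cotangent complex
\[
  L_{X^{hG}/X} \simeq (\varepsilon^* L_X)^\mov[1]
\]
in $\Dqc(X^{hG} \fibprod_S BG)$. By definition, $\varepsilon$ is formally unramified iff the sheaf of relative differentials $h^0(L_{X^{hG}/X})$ vanishes, which by the above isomorphism is equivalent to the vanishing of $h^{-1}((\varepsilon^* L_X)^\mov)$.

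Next, I would observe that passing to the moving part commutes with cohomology: indeed, since $G$ is linearly reductive, the splitting $\cF \simeq \cF^\fix \oplus \cF^\mov$ of \defnref{defn:bodach} is natural in $\cF \in \Dqc^G(X^{hG})$ and hence commutes with the t-structure. Therefore
\[
  h^{-1}((\varepsilon^* L_X)^\mov) \simeq \big(h^{-1}(\varepsilon^* L_X)\big)^\mov.
\]
Vanishing of a quasi-coherent sheaf on the locally noetherian stack $X^{hG}$ can be checked on geometric points, and moreover taking moving parts is compatible with $*$-pullback (again by naturality and by $G$-equivariance of the splitting). So $\varepsilon$ is formally unramified iff $\big(h^{-1}(x^* L_X)\big)^\mov = 0$ for every geometric point $x$ of $X^{hG}$, where we identify $x$ with its image $\varepsilon(x)$ as in the footnote.

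Finally, I would invoke the standard identification $h^{-1}(x^* L_X) \simeq \mfr{a}(x)^\vee$, valid for any $1$-Artin stack $X$ and any geometric point $x$: this follows from the exact triangle associated to the factorization $\Spec(k(x)) \to B\uAut_X(x) \to X$ of $x$, the fact that the first morphism is formally smooth (so contributes nothing in degree $-1$), and the computation $L_{B\uAut_X(x)}|_{\Spec(k(x))} \simeq \mfr{a}(x)^\vee[1]$ for the classifying stack of the stabilizer. The $G$-action coming from \remref{rem:zymogenous} and \notatref{notat:bedark} is by construction compatible with the $G$-action on $\varepsilon^* L_X$ via this identification. Combining the steps yields the claimed equivalence. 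The only real subtlety is checking the naturality needed for the pointwise reduction in the middle step, but this is automatic from the functoriality of the splitting.
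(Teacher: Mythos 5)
Your overall strategy --- combine \corref{cor:anisogamy} with an identification of the stabilizer Lie algebra inside $\varepsilon^* L_X$ and then check vanishing at geometric points --- is the paper's, but your homological bookkeeping is backwards, and this is not merely cosmetic. For a group scheme $H$ over a field, the restriction of $L_{BH}$ along the quotient point is $\mfr{h}^\vee[-1]$: the dual Lie algebra sits in cohomological degree $+1$, not in degree $-1$ as in your claim $L_{B\uAut_X(x)}|_{\Spec(k(x))} \simeq \mfr{a}(x)^\vee[1]$. Consequently unramifiedness of $\varepsilon$, i.e.\ vanishing of $h^0(L_{X^{hG}/X}) \simeq h^0\big((\varepsilon^*L_X)^\mov[1]\big)$, is governed by $h^{1}\big((\varepsilon^*L_X)^\mov\big)$, not $h^{-1}$: automorphisms of a $1$-Artin stack contribute to the coconnective part of $L_X$, while $h^{-1}(x^*L_X)$ records conormal/obstruction directions and has nothing to do with $\mfr{a}(x)^\vee$. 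Your supporting argument also fails at its input: you assert that $\Spec(k(x)) \to B\uAut_X(x)$ is formally smooth, which is false whenever the stabilizer is non-smooth --- exactly the situation of \examref{exam:uf0pb1hp1}, where $\uAut_X(x)=\alpha_p$ in characteristic $p$ and the corollary is precisely designed to detect the resulting ramification. What one actually needs is that the residual-gerbe inclusion $B\uAut_X(x) \hookrightarrow X$ is a representable monomorphism, so that $L_{B\uAut_X(x)/X}$ lies in degrees $\le -1$ and does not interfere with $h^1$; combined with the computation of $x^*L_{B\uAut_X(x)}$ (whose $h^1$ is $\mfr{a}(x)^\vee$ even for non-smooth stabilizers) this gives $h^1(x^*L_X) \simeq \mfr{a}(x)^\vee$, $G$-equivariantly by \remref{rem:zymogenous}.

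A second, related problem: your reduction to geometric points silently uses that the relevant cohomology sheaf commutes with $*$-pullback. In the correct degree this is fine, because $L_X$ lies in degrees $\le 1$, so $h^1$ is the top cohomology and $h^1(x^*L_X) \simeq x^*h^1(L_X)$; then Nakayama on the locally noetherian stack $X^{hG}$ finishes the argument. For $h^{-1}$ of a general complex this compatibility fails, so as written the ``check on geometric points'' step does not go through. Once the degrees are corrected and the smoothness claim is replaced by the monomorphism argument above, your proof becomes a sheafified version of the paper's argument, which instead pulls $L_{X^{hG}/X}$ back to each geometric point and identifies it there with $(\mfr{a}(x)^\vee)^\mov$ directly.
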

\begin{proof}
  Recall that formal unramifiedness is the condition that $\H^0 (L_{X^{hG}/X}) = \Omega^1_{X_\cl^{hG}/X_\cl}$ vanishes.
  We may therefore replace $X$ by its classical truncation.
  By \remref{rem:zymogenous}, the canonical monomorphism $B\uAut_X(x) \hook X$ is $G$-equivariant.
  The relative cotangent complex of the latter vanishes, so that there is a canonical isomorphism
  \[ x^* L_X \simeq x^* L_{B\uAut_X(x)} \simeq \mfr{a}(x)^\vee[-1] \]
  in $\Dqc^G(\Spec(k(x)))$ (where $x$ also denotes the morphism $\Spec(k(x)) \to B\uAut_X(x)$ by abuse of notation).
  By \corref{cor:anisogamy} we get a canonical isomorphism
  \[
    x^* L_{X^{hG}/X}
    \simeq x^* (L_X)^\mov[1]
    \simeq (\mfr{a}(x)^\vee)^\mov,
  \]
  whence the claim.
\end{proof}

Note that \corref{cor:roasting} can be generalized as follows:

\begin{lem}\label{lem:barkpeel}
  Suppose $G$ is linearly reductive.
  Let $X$ be a locally noetherian Artin stack over $S$ with $G$-action.
  Let $i : Z \to X$ be a $G$-equivariant finite unramified morphism such that:
  \begin{defnlist}
    \item\label{item:bojite}
    The $G$-action on $Z$ is trivial.

    \item\label{item:euthytropic}
    The conormal sheaf $\cN_{Z/X} \in \Dqc^{G}(Z) \simeq \Dqc(Z \times BG)$ has no $G$-fixed part.
  \end{defnlist}
  Then the canonical morphism $i^*L_X \to L_Z$ in $\Dqc^{G}(Z) \simeq \Dqc(Z \times BG)$ induces an isomorphism $L_Z \simeq (L_Z)^\fix \simeq (i^*L_X)^\fix$.
  In particular, if $X$ is smooth (resp. quasi-smooth) then so is $Z$.
\end{lem}
\begin{proof}
  The first isomorphism is because $L_Z$ has no moving part by \itemref{item:bojite}, and the second is because the cofibre $L_{Z/X}$ has no fixed part by \itemref{item:euthytropic}.
\end{proof}

\subsection{Reparametrized homotopy fixed points for torus actions}

\begin{rem}\label{rem:vhVyCt}
  Let $\rho : G' \twoheadrightarrow G$ be a surjective homomorphism between group schemes over $S$.
  Given an $S$-scheme $A$ and an $S$-morphism $x : A \to X^{hG'}$, consider the commutative square
  \begin{equation}\label{eq:fnb0p1bp}
    \begin{tikzcd}
      \uAut_{[X/G']}(x) \ar[twoheadrightarrow]{r}\ar[d]
      & G'_A \ar[twoheadrightarrow]{d}{\rho}
      \\
      \uAut_{[X/G]}(x) \ar{r}
      & G_A
    \end{tikzcd}
  \end{equation}
  of group schemes over $A$.

  \begin{defnlist}
    \item
    Note that the square is cartesian.
    Indeed, the induced map on kernels of the horizontal maps may be identified with the identity of $\uAut_X(x)$.

    \item
    Since the upper horizontal and right-hand vertical arrows are surjective, the same holds for the lower arrow.
    This shows that $x$ factors through the fixed locus $X^G$.
    Allowing $x$ to vary, we see that the canonical morphism $\varepsilon_X^{G'} : X^{hG'} \to X$ factors through
    \begin{equation}\label{eq:cellepore}
      X^{hG'} \to X^G.
    \end{equation}
  \end{defnlist}
\end{rem}

\begin{defn}
  Let $T$ be a split torus over $S$.
  A \emph{reparametrization} of $T$ is an isogeny $\rho : T' \twoheadrightarrow T$ where $T'$ is a split torus.
  A morphism of reparametrizations $T'' \to T'$ is a morphism over $T$.
\end{defn}

\begin{rem}\label{rem:ferial}
  Note that the category of reparametrizations of $T$ is filtered.
  Indeed, say $T$ is of rank $r$.
  Reparametrizations $T' \twoheadrightarrow T$ are in bijection with diagonal $(r\times r)$-matrices $(d_1, \ldots, d_r)$, $d_i\in\bZ\setminus \{0\}$.
  Given two reparametrizations $T' \twoheadrightarrow T$ and $T'' \twoheadrightarrow T$, corresponding to tuples $(d'_1, \ldots, d'_r)$ and $(d''_1, \ldots, d''_r)$, there is a morphism $T'' \to T'$ if and only if each $d''_i$ divides $d'_i$.
  Thus the category is equivalent to a poset.
\end{rem}

\begin{defn}\label{defn:XrhT}
  Let $G=T$ be a split torus over $S$.
  Let $X$ be an Artin stack over $S$ with $T$-action.
  The \emph{reparametrized homotopy fixed point stack} $X^{rhT} \to X$ is defined as the filtered colimit
  \[ X^{rhT} := \colim_{\rho} X^{hT'} \]
  over all reparametrizations $\rho: T' \twoheadrightarrow T$.
  The morphisms $\varepsilon_X^{T'} : X^{hT'} \to X$ induce a canonical morphism $\varepsilon^{rT}_X : X^{rhT} \to X$, $T$-equivariant with respect to the trivial action on $X^{rhT}$.
\end{defn}

\begin{prop}\label{prop:cobang}
  Let $G=T$ be a split torus of rank $r$ over $S$ acting on a quasi-separated Deligne--Mumford stack $X$ locally of finite presentation over $S$.
  \begin{thmlist}
    \item\label{item:amphoral}
    For any reparametrization $T' \twoheadrightarrow T$, the induced map
    \begin{equation}\label{eq:rJWZGjGzRDBKVOsFM}
      X^{hT} \to X^{hT'}
    \end{equation}
    is an open and closed immersion.

    \item\label{item:drew}
    There is a canonical decomposition
    \[
      X^{rhT} = \bigsqcup_{\rho} X^{rhT}_\rho
    \]
    over reparametrizations $\rho : T' \twoheadrightarrow T$, where $X^{rhT}_\rho$ is the open and closed substack
    \[ X^{rhT}_\rho = X^{hT'} \setminus \bigcup_{\rho' : T'' \twoheadrightarrow T} X^{rhT''} \]
    where $\rho'$ varies over reparametrizations that factor $\rho$ via some non-identical reparametrization $T' \twoheadrightarrow T''$.

    \item\label{item:ironclad}
    If $X$ is quasi-compact, then $X^{rhT}$ is quasi-compact.
    In particular, we have $X^{rhT} = X^{hT'}$ for some reparametrization $\rho : T' \twoheadrightarrow T$.
  \end{thmlist}
\end{prop}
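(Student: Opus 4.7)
The plan is to handle the three parts in order, deducing~\itemref{item:ironclad} from~\itemref{item:amphoral} and~\itemref{item:drew}. For~\itemref{item:amphoral}, the strategy is to realize $X^{hT} \to X^{hT'}$ as the pullback of an open-and-closed immersion over $X$. Let $K := \ker(T' \twoheadrightarrow T)$, a finite diagonalizable group scheme. By~\eqref{eq:fnb0p1bp} and~\remref{rem:onoyb01b1}, an $A$-point $(x,\sigma)$ of $X^{hT'}$ amounts to a point $x \in X(A)$ together with a homomorphism $\sigma : T'_A \to \uAut_{[X/T]}(x)$ satisfying $\alpha \circ \sigma = \pi$. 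Since $\alpha(\sigma(K)) = \pi(K) = 0$, the restriction $\sigma|_K$ lands in $\ker(\alpha) = \uAut_X(x)$, and the lift of $(x,\sigma)$ to $X^{hT}$ exists if and only if $\sigma|_K = 0$. This exhibits $X^{hT} \to X^{hT'}$ as the pullback of the identity (trivial-homomorphism) section along a canonical morphism $X^{hT'} \to \uHom_{\mrm{grp},X}(K_X, I_X)$ over $X$. Since $X$ is Deligne--Mumford, $I_X \to X$ is finite and unramified, and by rigidity of homomorphisms out of finite diagonalizable groups (SGA~3, Exp.~IX) the Hom scheme $\uHom_{\mrm{grp},X}(K_X, I_X) \to X$ is representable and unramified; its identity section is therefore open (as a section of an unramified morphism) and closed (by separatedness).

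For~\itemref{item:drew}, the plan is to stratify $X^{hT'}$ by the image $H_\sigma := \mathrm{im}(\sigma) \sub \uAut_{[X/T]}(x)$. As the image of the connected diagonalizable $T'$, $H_\sigma$ is itself connected diagonalizable, and $\alpha|_{H_\sigma} : H_\sigma \twoheadrightarrow T$ is an isogeny with finite kernel $H_\sigma \cap \uAut_X(x)$---hence a reparametrization of $T$ canonically attached to $(x,\sigma)$. A direct diagram chase using~\eqref{eq:fnb0p1bp} shows that $H_\sigma$ is invariant under the transition maps $X^{hT'''} \to X^{hT'}$ along surjections $T''' \twoheadrightarrow T'$ (precomposition with a surjection does not change the image in $\uAut_{[X/T]}(x)$), so the isomorphism class $[\alpha|_{H_\sigma}]$ descends to a well-defined type map on $|X^{rhT}|$. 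Within a single $X^{hT'}$, the fiber of this type map over the class $\rho$ is exactly the locus where $\sigma$ is a monomorphism, equivalently where $\sigma|_{K''} \neq 0$ for every nontrivial subgroup $K'' \sub \ker(T' \twoheadrightarrow T)$; since $\ker(T' \twoheadrightarrow T)$ is finite diagonalizable and so has only finitely many subgroups, and each condition $\{\sigma|_{K''} = 0\}$ is clopen by~\itemref{item:amphoral}, this fiber is itself clopen. Intrinsicness of $H_\sigma$ then yields disjointness of the distinct $X^{rhT}_\rho$ and covering of $X^{rhT}$, producing the decomposition.

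For~\itemref{item:ironclad}, note that for $(x,\sigma) \in X^{rhT}_\rho$ with $\rho : T' \twoheadrightarrow T$, the monomorphism property forces $\ker(\rho) \hook \uAut_X(x)$, so $|\ker(\rho)| \leq |\uAut_X(x)|$. Quasi-compactness of $X$ together with finite presentation of the inertia $I_X \to X$ (from $X$ being locally of finite presentation) bound the stabilizer orders $|\uAut_X(x)|$ uniformly by some integer $N$, via constructibility of fibre length for finitely presented morphisms. There are only finitely many isomorphism classes of reparametrizations of $T$ whose kernel has order at most $N$, and all admit a common refinement $T^\ast \twoheadrightarrow T$ (for instance the $(N!)$-power isogeny on $T$); by~\itemref{item:amphoral}, each contributing $X^{hT'}$ embeds as a clopen substack of $X^{hT^\ast}$, whence $X^{rhT} = X^{hT^\ast}$ is quasi-compact. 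The principal obstacle will be the intrinsicness claim in~\itemref{item:drew}---verifying that the image subgroup $H_\sigma$ is genuinely preserved under transition maps of the filtered system---since this is what promotes the stratum-by-stratum clopen decomposition inside each $X^{hT'}$ to a decomposition of the colimit $X^{rhT}$ itself.
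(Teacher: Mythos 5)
Your proposal is correct in substance, but it takes a genuinely different route from the paper in parts \itemref{item:amphoral} and \itemref{item:ironclad}. For \itemref{item:amphoral}, the paper argues that $X^{hT}\to X^{hT'}$ is formally étale because its relative cotangent complex vanishes (\corref{cor:anisogamy}), étale by Romagny's finite-presentation result, and a closed immersion by \propref{Prop:Fixed_locus_DM_is_closed}; you instead identify $X^{hT}$ inside $X^{hT'}$ as the locus where $\sigma|_K$ is trivial, $K=\ker(T'\twoheadrightarrow T)$, and pull back the unit section of a Hom space of group homomorphisms from $K_X$ to $I_X$. For \itemref{item:ironclad}, the paper uses \thmref{thm:boundedness} together with \propref{prop:XTred Artin} to see that the union of the $X^{hT'}$ has closed, hence quasi-compact, underlying set in $X$ and therefore stabilizes; you instead bound the stabilizer orders uniformly on the quasi-compact $X$ (the inertia is quasi-finite, separated and of finite presentation, so has universally bounded fibres) and use the monomorphism $\ker\rho\hook\uAut_X(x)$ on the type-$\rho$ stratum to conclude that only reparametrizations dominated by a single $T^*$ contribute, whence $X^{rhT}=X^{hT^*}$. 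Your route for \itemref{item:ironclad} is self-contained (it does not need \thmref{thm:boundedness}), and your route for \itemref{item:amphoral} replaces the deformation theory of the Weil restriction $X^{hG}$ by an explicit clopen condition; part \itemref{item:drew} is in both treatments the same formal consequence of \itemref{item:amphoral}, with your type invariant $H_\sigma$ matching \remref{rem:noncottager}.

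Three points need repair or an explicit word. First, under the stated hypotheses $I_X\to X$ is quasi-finite, separated, unramified and locally of finite presentation, but \emph{not} finite in general (quasi-separated Deligne--Mumford stacks can have non-proper inertia), so your assertion of finiteness is unjustified; fortunately only unramifiedness and separatedness are used. Second, representability and unramifiedness of $\uHom_{\mrm{grp},X}(K_X,I_X)$ are not literally given by the rigidity statements of SGA~3, Exp.~IX (whose targets are smooth or affine). Instead, realize it as a closed subspace of the Weil restriction of $I_X\times_X K_X$ along the finite locally free $K_X\to X$ (the group-law condition is an equalizer condition with separated target), and note that Weil restriction along a finite flat morphism preserves formal unramifiedness, so unramifiedness of $I_X\to X$ --- i.e.\ the Deligne--Mumford hypothesis --- already yields it; with that, your step ``sections of unramified (resp.\ separated) morphisms are open (resp.\ closed) immersions'' goes through. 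Third, in \itemref{item:ironclad} you still need $X^{hT^*}$ itself to be quasi-compact; this does not follow from your bound alone and should be supplied by \propref{Prop:Fixed_locus_DM_is_closed}, which makes it a closed substack of the quasi-compact $X$. Finally, note that in characteristic $p$ a kernel with infinitesimal part admits no monomorphism into the étale group $\uAut_X(x)$, so the corresponding strata are simply empty; this is consistent with (indeed strengthens) your bound, but is worth saying when you assert that only kernels of order at most $N$ occur.
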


\begin{proof}
  \itemref{item:amphoral}:
  For any reparametrization $T' \twoheadrightarrow T$, the induced map
  \begin{equation*}
    X^{hT} \to X^{hT'}
  \end{equation*}
  is formally étale, since the relative cotangent complex vanishes by \corref{cor:anisogamy}.
  Moreover, \eqref{eq:rJWZGjGzRDBKVOsFM} is locally of finite presentation, hence étale, at least if $X$ has affine and finitely presented diagonal, see \cite{RomagnyFixedMult}.
  If $X$ is quasi-separated, Deligne--Mumford and locally of finite presentation over $S$, then \eqref{eq:rJWZGjGzRDBKVOsFM} is also a closed immersion (\propref{Prop:Fixed_locus_DM_is_closed}).
  Thus in that case it is an open and closed immersion.

  \itemref{item:drew}: follows from \itemref{item:amphoral}.

  \itemref{item:ironclad}:
  Suppose $X$ is quasi-compact.
  The closed substacks $X^{hT'}$ stabilize as $\rho : T'\twoheadrightarrow T$ varies among reparametrizations.
  Indeed, recall that each $X^{hT'}$ is a closed substack of $X$ (\propref{Prop:Fixed_locus_DM_is_closed}) and the colimit $\colim_\rho X^{hT'}$ over reparametrizations $\rho : T' \twoheadrightarrow T$ is also closed in $X$ by \thmref{thm:boundedness} and \propref{prop:XTred Artin}.
  In particular, it is quasi-compact because $X$ is quasi-compact.
\end{proof}

\begin{rem}\label{rem:noncottager}
  We have the following more explicit description of the open and closed substack $X^{rhT}_\rho$, for a given reparametrization $\rho : T' \twoheadrightarrow T$.
  Let $x$ be a geometric point of $X^{rhT}$.
  Then we have $x \in X^{rhT}_\rho$ if and only if $\rho$ is identified with the reparametrization $\alpha_A : \uAut_{\sX}(x)^0_\red \to T_A$, where the source is the reduced identity component of $\uAut_{\sX}(x)$.
  Indeed, the group-theoretic section of $\uAut_{[X/T']}(x) \to T'$ gives rise to a group homomorphism $T' \to \uAut_{[X/T]}(x)$ from a reduced and connected algebraic group, which thus factors through $\uAut_{[X/T]}(x)^0_\red$.
\end{rem}

\begin{rem}
  For $T$ a rank one torus over a field acting on a Deligne--Mumford stack, the reparametrized homotopy fixed stack $X^{rhT}$ is used as the definition of $T$-fixed locus in \cite[Def.~5.25]{AlperHallRydhLuna} (see also \cite[Prop.~5.3.4]{Kresch}).
\end{rem}

\subsection{Fixed vs. reparametrized homotopy fixed}

  In this subsection our goal is to compare the fixed locus $X^G$ with the homotopy fixed point stack $X^{hG}$.
  The two constructions are typically different, as $\varepsilon : X^{hG} \to X$ may not be a monomorphism (\examref{exam:uf0pb1hp1}).
  But even if we just consider the set-theoretic image of $X^{hG}$ in $\abs{X}$, it will typically not coincide with $\abs{X^G}$.
  In the case where $G=T$ is a torus, we can somewhat bridge the gap by replacing $X^{hT}$ by its reparametrized version $X^{rhT}$.
  In this case, we will prove:

  \begin{thm}
  \label{thm:boundedness}
    Suppose $G=T$ is a split torus acting on a $1$-Artin stack $X$ over $S$ with affine stabilizers.
    Then the morphisms \eqref{eq:cellepore} induce a canonical surjection
    \[ X^{rhT} \twoheadrightarrow X^T \]
    over $X$.
  \end{thm}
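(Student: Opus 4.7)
The plan is to check surjectivity on geometric points (per our conventions) and then translate the claim into a purely group-theoretic lifting problem. I would fix an algebraically closed field $k$ and a $k$-point $x$ of $X^T$, and seek a reparametrization $\rho : T' \twoheadrightarrow T$ together with a $k$-point of $X^{hT'}$ above $x$; since $X^{rhT}$ is the filtered colimit of the $X^{hT'}$, any such $k$-point suffices. By \remref{rem:onoyb01b1} and the Cartesian identification $[X/T'] \simeq BT' \fibprod_{BT} [X/T]$, such a lift amounts to a group homomorphism $\sigma : T'_k \to H$ satisfying $\alpha \circ \sigma = \rho_k$, where $H := \uAut_\sX(x)$ and $\alpha : H \twoheadrightarrow T_k$ is the surjection witnessing $x \in X^T$.

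I would then solve the following clean group-theoretic problem: given a surjection $\alpha : H \twoheadrightarrow T_k$ from an affine algebraic $k$-group $H$ onto a split torus over the algebraically closed field $k$, construct a split $k$-torus $T'_k$ and a group homomorphism $\sigma : T'_k \to H$ such that $\alpha \circ \sigma$ is an isogeny onto $T_k$. Affineness of $H$ follows from the short exact sequence $1 \to \uAut_X(x) \to H \to T_k \to 1$ and the affine stabilizers hypothesis. Replacing $H$ by its identity component is harmless since $\alpha(H^0) \subseteq T_k$ is closed, connected, and of finite index, hence equal to $T_k$; and the unipotent radical of $H^0$ factors trivially through any map to a torus, so $\alpha$ descends to the reductive quotient. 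For connected reductive $H$ with a chosen maximal torus $T_H$, the decomposition $H = T_H \cdot [H,H]$ together with the triviality of the composite $[H,H] \to H \to T_k$ (a semisimple group has no nontrivial map to a torus) forces $\alpha|_{T_H} : T_H \twoheadrightarrow T_k$ to be surjective.

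The remainder is a character lattice argument. Let $\Lambda' \subseteq X^*(T_H)$ denote the saturation of the image of the injection $X^*(T_k) \hookrightarrow X^*(T_H)$: this is a direct summand of rank $\dim T$ in the free $\Z$-module $X^*(T_H)$. Choosing any complementary summand $M$ yields a quotient $X^*(T_H) \twoheadrightarrow \Lambda'$, dual to a subtorus $T'_k \hookrightarrow T_H$ of dimension $\dim T$ for which $T'_k \to T_k$ is an isogeny. Because $T'_k$ is a split $k$-torus, this isogeny descends uniquely (via character lattices) to an isogeny of split tori $\rho : T' \twoheadrightarrow T$ over $S$, and the composite $\sigma : T'_k \hookrightarrow T_H \subset H$ satisfies $\alpha \circ \sigma = \rho_k$ by construction. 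The main difficulty is the algebraic group theory in producing a maximal torus that surjects onto $T_k$ — this is precisely where the affine stabilizers hypothesis enters, via the reduction to the connected reductive case; the character lattice step is then formal.
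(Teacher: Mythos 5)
Correct, and essentially the paper's own argument: you check surjectivity on geometric points, use the cartesian square \eqref{eq:fnb0p1bp} (equivalently $[X/T'] \simeq BT' \fibprod_{BT} [X/T]$) to convert a lift of $x$ to $X^{hT'}$ into a homomorphism $T'_{k(x)} \to \uAut_{[X/T]}(x)$ over $T_{k(x)}$, produce a subtorus of the stabilizer mapping isogenously onto $T_{k(x)}$, and descend that isogeny to a reparametrization $\rho : T' \twoheadrightarrow T$ over $S$ via character lattices. The only difference is presentational: where you prove the group-theoretic core by hand (identity component, triviality of maps from the unipotent radical and from $[H,H]$ to a torus, saturation of the character lattice to cut the maximal torus down to an isogeny) and carry out the lattice-level descent explicitly, the paper simply cites Borel's result that a surjection of connected affine groups admits a subtorus mapping isogenously onto the target and SGA3, Exp.~VIII, for lifting the isogeny to $S$.
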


  \begin{proof}
    As $\rho : T' \twoheadrightarrow T$ varies, the canonical morphisms $X^{hT'} \to X^T$ \eqref{eq:cellepore} are compatible by construction.
    Thus there is a canonical morphism
    \[ X^{rhT} \to X^T. \]

    For surjectivity, let $x$ be a geometric point of $X^T$.
    Then we have the canonical surjection $\uAut_{[X/T]}(x) \twoheadrightarrow T_{k(x)}$ \eqref{eq:stabses}.
    By \cite[Cor.~1 of Prop.~11.14]{Borel}, there exists a (split) subtorus of $\uAut_{[X/T]}(x)$ on which this restricts to an isogeny.
    By \cite[Exp.~VIII, Cor.~1.6]{SGA3}, we may lift this to an isogeny $\rho : T' \to T$ over $S$.

    Using the cartesian square \eqref{eq:fnb0p1bp} (taking now $A=\Spec(k(x))$), we see that in order to lift $x$ to $X^{hT'}$ it is enough to show that $\uAut_{[X/T]}(x) \to T_{k(x)}$ becomes surjective after base change along $\rho$.
    But $\rho$ factors through $\uAut_{[X/T]}(x) \to T_{k(x)}$ by construction, so this is clear.
  \end{proof}

  \begin{cor}\label{cor:iouYVkdqDavArMOFS}
    Suppose $G=T$ is a split torus acting on a quasi-compact $1$-Artin stack $X$ over $S$ with affine stabilizers.
    Then there exists a reparametrization $\rho : T' \twoheadrightarrow T$ such that the morphism \eqref{eq:cellepore} induces a surjection
    \[ X^{hT'}_\red \twoheadrightarrow X^T_\red \]
    of reduced $1$-Artin stacks (where the target is the reduced $T$-fixed locus, see \defnref{defn:XGred}).
    In particular, the set-theoretic image of $\varepsilon : X^{hT'} \to X$ coincides with $\abs{X^T} \sub \abs{X}$.
  \end{cor}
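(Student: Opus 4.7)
The plan is to combine \thmref{thm:boundedness} with a stabilizer-stratification argument in order to replace the filtered colimit $X^{rhT} = \colim_\rho X^{hT'}$ by a single reparametrization.

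\thmref{thm:boundedness} already provides a canonical surjection $X^{rhT} \twoheadrightarrow X^T$. Since the category of reparametrizations $\rho : T' \twoheadrightarrow T$ is filtered and in fact admits common refinements (``LCM'' reparametrizations) by \remref{rem:ferial}, it suffices to exhibit \emph{finitely many} reparametrizations $\rho_1, \dots, \rho_N$ whose associated stacks $X^{hT'_1}, \dots, X^{hT'_N}$ collectively cover $\abs{X^T}$; taking a common refinement $T' \twoheadrightarrow T$ then yields a single stage of the colimit surjecting onto $X^T$.

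To produce finitely many sufficient reparametrizations, I would stratify $X$ by $T$-stabilizer type. Applying \propref{prop:leftish} iteratively and using noetherian induction on the quasi-compact complement, $X$ admits a finite stratification by locally closed $T$-invariant substacks on which the $T$-stabilizer group $\St^T_X$ is constant. For each stratum $X_i$ with generic point $\eta_i$ satisfying $\St^T_X(\eta_i) = T_{k(\eta_i)}$ (i.e.\ meeting $X^T$), the argument in the proof of \thmref{thm:boundedness} produces a reparametrization $\rho_i : T'_i \twoheadrightarrow T$ by lifting a split subtorus of $\uAut_{[X/T]}(\eta_i)$ along which the canonical surjection $\uAut_{[X/T]}(\eta_i) \twoheadrightarrow T_{k(\eta_i)}$ restricts to an isogeny. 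Only finitely many strata contribute, so finitely many $\rho_i$ suffice.

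The main obstacle is verifying that the reparametrization $\rho_i$ extracted at the generic point $\eta_i$ also lifts \emph{every} geometric point of $X_i$ to $X^{hT'_i}$, not merely $\eta_i$ itself. This should follow from constancy of $\St^T_X$ along the stratum: a split subtorus of $\uAut_{[X/T]}(\eta_i)$ on which the surjection onto $T$ is an isogeny spreads out to a similar subtorus at every geometric point of $X_i$, producing the required group-theoretic section (in the sense of \remref{rem:onoyb01b1}) over the whole stratum. Once this spreading-out is established, the common refinement $T' \twoheadrightarrow T$ surjects onto every geometric point of $X^T$, and the passage from the set-theoretic surjection $\abs{X^{hT'}} \twoheadrightarrow \abs{X^T}$ to a surjection of reduced $1$-Artin stacks $X^{hT'}_\red \twoheadrightarrow X^T_\red$ is then formal.
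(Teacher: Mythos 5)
Your overall strategy -- use \thmref{thm:boundedness} for pointwise surjectivity with a point-dependent reparametrization, reduce by filteredness (\remref{rem:ferial}) to exhibiting finitely many sufficient reparametrizations, and extract the finiteness from quasi-compactness -- is exactly the kind of argument the paper leaves implicit (the corollary is stated without proof as a quasi-compactness consequence of \thmref{thm:boundedness}). But the step you yourself flag as the main obstacle is where the proposal has a genuine gap. The reparametrization needed at a geometric point $x$ of $X^T$ is not controlled by the $T$-stabilizer $\St^T_X(x)$: on any stratum meeting $X^T$ one has $\St^T_X(x) = T_{k(x)}$ identically, so ``constancy of $\St^T_X$ along the stratum'' -- which is all that \propref{prop:leftish} provides -- carries no information there. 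What governs the reparametrization is the isogeny $\uAut_{[X/T]}(x)^0_\red \twoheadrightarrow T_{k(x)}$ of \remref{rem:noncottager}, i.e.\ the finite kernel sitting inside $\uAut_X(x)$; the stratification of \propref{prop:leftish} says nothing about the stabilizers of $X$ itself, which (and with them the required isogeny) may jump along such a stratum. So the claimed spreading-out of the subtorus from the generic point $\eta_i$ to \emph{every} geometric point of the stratum is not justified by what you cite; the natural spreading-out argument for a subtorus (or for a group-theoretic section of $\uAut_{[X/T']}(x) \to T'$) only produces it over some nonempty open subset of the stratum. (Also note that \propref{prop:leftish} is proved for stacks of finite type over the field $k$, whereas the corollary lives over the base $S$ of the appendix, so you would need its analogue over $S$.)

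The skeleton can be repaired without new ideas by replacing the once-and-for-all stratification with noetherian induction on closed subsets of $\abs{X^T}$: at each stage take a generic point of the remaining closed subset, extract a reparametrization as in the proof of \thmref{thm:boundedness}, spread the resulting group-theoretic section out over a nonempty open, and pass to the closed complement; noetherianness guarantees termination, so only finitely many reparametrizations occur, and one then takes a common refinement. For the common refinement to suffice you should also record the easy but necessary fact that liftability ascends along refinements: if $x$ lifts to $X^{hT'}$ and $T'' \twoheadrightarrow T'$ is a further reparametrization, the cartesian square \eqref{eq:fnb0p1bp} shows that $x$ lifts to $X^{hT''}$, so the images of $\abs{X^{hT'}}$ in $\abs{X^T}$ form an increasing filtered family. (When one knows in addition that the $X^{hT'}$ are quasi-compact, e.g.\ in the Deligne--Mumford case via \propref{prop:cobang}, one can instead finish by Chevalley: an increasing filtered family of constructible subsets covering the quasi-compact space $\abs{X^T}$ must contain one equal to all of it.) Your final point is fine: since surjectivity of morphisms of stacks is checked on geometric points and $X^{hT'} \to X$ factors through $X^T$ (\remref{rem:vhVyCt}), passing to reduced stacks is formal.
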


\subsection{Edidin--Rydh fixed vs. reparametrized homotopy fixed}

\begin{defn}\label{defn:Zaglossus}
  Let $G=T$ be a split torus of rank $r$ over $S$ acting on a quasi-DM stack $X$ over $S$.
  We define $X^{sT}$ as the stack over $X$ whose $A$-valued points, for an $X$-scheme $x : A \to X$, are closed subgroups of $\uAut_{[X/T]}(x)$ which are affine and smooth over $A$ with connected fibres of dimension $r$.
\end{defn}

\begin{rem}\label{rem:faraway}
  The definition of $X^{sT}$ is a variant of the construction in \cite[Prop.~C.5]{EdidinRydh} of, for a $1$-Artin stack $\sX$, a stack $\sX^\max \to \sX$ that can be thought of as the locus of points with maximal-dimensional stabilizer.
  Whenever $X^{sT} \ne \initial$, then we have
  \[
    X^{sT} = \sX^{\max} \fibprod_{\sX} X
  \]
  where $\sX = [X/T]$.
\end{rem}

In this subsection we will prove:

\begin{thm}\label{thm:fotch}
  Let $G=T$ be a split torus over $S$.
  Let $X$ be a tame Deligne--Mumford stack which is quasi-separated and locally of finite presentation over $S$ with $T$-action.
  Then there is a canonical isomorphism
  \[ X^{rhT} \simeq X^{sT} \]
  over $X$.
\end{thm}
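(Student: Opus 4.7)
I would construct mutually inverse morphisms $\Phi : X^{rhT} \to X^{sT}$ and $\Psi : X^{sT} \to X^{rhT}$ of stacks over $X$. An $A$-point of $X^{rhT}$ consists of a reparametrization $\rho : T' \twoheadrightarrow T$ together with a lift of $x \in X(A)$ to $X^{hT'}$, which by \remref{rem:onoyb01b1} is a group-theoretic section $s : T'_A \to \uAut_{[X/T']}(x)$. Under the cartesian square \eqref{eq:fnb0p1bp} this is the same as a homomorphism $f : T'_A \to \uAut_{\sX}(x)$ whose composition with $\uAut_{\sX}(x) \to T_A$ equals $\rho$. Since $\rho$ is an isogeny, $\ker(f)$ is finite, so the scheme-theoretic image $H_0 := f(T'_A)$ is a closed subgroup of $\uAut_\sX(x)$ that is smooth, affine, of dimension $r$ with connected fibres, i.e., an $A$-point of $X^{sT}$. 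Any morphism of reparametrizations $T'' \twoheadrightarrow T' \twoheadrightarrow T$ produces the same $H_0$, so $\Phi$ factors through the filtered colimit $X^{rhT}$.

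For $\Psi$, given $(x, H_0 \hookrightarrow \uAut_\sX(x)) \in X^{sT}(A)$, I would first prove that $H_0$ is a torus of multiplicative type and that $H_0 \to T_A$ is an isogeny. The kernel $K := H_0 \cap \uAut_X(x) = \ker(H_0 \to T_A)$ is finite (as $X$ is Deligne--Mumford) and linearly reductive (as $X$ is tame). Since $H_0$ is smooth and connected of dimension $r = \dim T_A$ with finite kernel, $H_0 \to T_A$ is surjective. The derived subgroup $[H_0, H_0]$ is connected and contained in the finite $K$, hence trivial, so $H_0$ is commutative. The resulting extension $1 \to K \to H_0 \to T_A \to 1$ of a split torus by a finite linearly reductive commutative group scheme is then a commutative smooth affine connected group whose unipotent quotient is trivial, so $H_0$ is of multiplicative type. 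Since $T$ is split, étale-locally on $A$ the torus $H_0$ becomes a split torus $T'$, and the composite $T' \simeq H_0 \to T_A$ is a reparametrization. The inclusion $T' \simeq H_0 \hookrightarrow \uAut_\sX(x)$ lifts tautologically to a section of $\uAut_{[X/T']}(x) \to T'_A$, defining an étale-local lift to $X^{hT'}$; these glue by stackiness of $X^{rhT}$.

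The main obstacle will be verifying that $H_0$ is of multiplicative type (the tameness assumption is essential here, as in positive characteristic non-tame DM stacks admit connected $r$-dimensional smooth subgroups that are not tori; compare \examref{exam:uf0pb1hp1}) and the subsequent descent argument producing a reparametrization from the étale-local splitting. Once these are in hand, both $\Phi \circ \Psi$ and $\Psi \circ \Phi$ are the identity on $A$-points tautologically: each picks out the unique smooth connected $r$-dimensional closed subgroup of $\uAut_\sX(x)$ that maps isogenously to $T_A$, together with its canonical reparametrization data.
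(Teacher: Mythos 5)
Your overall shape (a map in each direction on $A$-points) is reasonable, and your $\Psi$ is essentially the paper's proof of the containment $X^{sT}\sub X^{rhT}$, but your $\Phi$ has a genuine gap at its central step. Over a general base scheme $A$, the scheme-theoretic image $H_0:=f(T'_A)$ of the homomorphism $f\colon T'_A\to \uAut_{\sX}(x)$ is not known to be a flat (let alone smooth) closed subgroup scheme with connected $r$-dimensional fibres, and its formation does not commute with base change, so the assignment is not even visibly functorial in $A$; moreover $\ker(f)$ can be nontrivial (a point of $X^{hT}$ presented via a nontrivial reparametrization $\rho\colon T'\twoheadrightarrow T$ already gives $\ker(f)=\ker(\rho)_A$) and need not be flat over $A$, so you also cannot sidestep the image by forming an fppf quotient $T'_A/\ker(f)$. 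The paper's proof is engineered precisely to avoid this: it first records that both $X^{rhT}$ and $X^{sT}$ are closed substacks of $X$ (\propref{prop:cobang}, \thmref{thm:inarch}), so it suffices to prove mutual containment of subobjects; and for $X^{rhT}\sub X^{sT}$ it uses the decomposition $X^{rhT}=\bigsqcup_\rho X^{rhT}_\rho$ together with \remref{rem:noncottager} to see that on the component indexed by $\rho$ the tautological homomorphism $s'\colon T'_A\to \uAut_{[X/T]}(x)$ is proper (a closed immersion followed by a finite map) and a monomorphism (checked on geometric points, where it is a morphism between abstractly isomorphic reparametrizations, \remref{rem:ferial}), hence a closed immersion — so $T'_A$ itself is the desired subgroup and no image or quotient is ever taken. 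Your $\Phi$ needs an argument of this kind (in particular, you must use the canonical component, not an arbitrary presentation of the point of the colimit).

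In $\Psi$, the fibrewise statement that $H_{0,a}$ is a torus is fine, but your argument that $H_0$ is of multiplicative type \emph{over the base} is also gapped: $K=\ker(H_0\to T_A)$ is not known to be flat, and commutativity and triviality of ``the unipotent quotient'' over $A$ do not follow formally from the fibres (the commutator morphism lands in a quasi-finite group scheme that may have infinitesimal fibres, so fibrewise triviality does not immediately give triviality). The paper instead deduces torality of the geometric fibres and then applies \cite[Exp.~XII, Thm.~1.7(b)]{SGA3} (existence of maximal tori in a smooth affine group scheme with fibres of constant reductive rank) plus flatness to conclude $H_0$ is a torus, split étale-locally by \cite[B.3.4]{ConradReductive}. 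Finally, tameness is not where you place it: a smooth connected affine group over an algebraically closed field that is isogenous onto a torus is a torus in any characteristic; in the paper tameness enters through \thmref{thm:inarch} (closedness of $X^{sT}\to X$, via the local structure theorem with linearly reductive stabilizers), a step your proposal never uses — which is itself a warning sign, since your argument as written would not explain why the tameness hypothesis appears in the statement.
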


Since $X^{rhT}$ is a closed substack of $X$ (see \propref{prop:cobang}), \thmref{thm:fotch} shows that $X^{sT}$ is also a closed substack of $X$ and in particular is Deligne--Mumford.
When $X$ is noetherian and $[X/T]$ admits a good moduli space in the sense of \cite{AlperGood}, this follows from \cite[Prop.~C.5]{EdidinRydh}.
We begin with the following generalization to our situation:

\begin{thm}\label{thm:inarch}
  Let $X$ be a $1$-Artin stack over $S$ with finite diagonal and tame stabilizers and $T$-action.
  Then the morphism $X^{sT} \to X$ is a closed immersion.
  In particular, $X^{sT}$ is $1$-Artin with finite diagonal.
\end{thm}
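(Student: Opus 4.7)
The plan is to reduce the statement to \cite[Prop.~C.5]{EdidinRydh} via the identification in \remref{rem:faraway}. Writing $\sX := [X/T]$, we may assume $X^{sT}$ is non-empty (the result being trivial otherwise), so that $X^{sT} \simeq \sX^{\max} \fibprod_{\sX} X$. Since closed immersions are stable under base change, it suffices to show that $\sX^{\max} \to \sX$ is a closed immersion.

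The first substantive step is to verify that $\sX$ has affine diagonal and linearly reductive stabilizers. The diagonal is affine because $X$ has finite (in particular affine) diagonal and $BT$ has affine diagonal. For the stabilizers, the short exact sequence \eqref{eq:stabses} exhibits $\uAut_{\sX}(x)$ as an extension of a subgroup of the torus $T_{k(x)}$ by the finite tame group $\uAut_X(x)$; since an extension of linearly reductive affine group schemes is again linearly reductive, we obtain the desired property.

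Next, I would invoke the local structure theorem of Alper--Hall--Rydh \cite{AlperHallRydhLuna}, which under these hypotheses provides an étale cover of $\sX$ (around each closed point) by quotient stacks of the form $[\Spec(A)/H]$ with $H$ linearly reductive. Any such quotient admits the good moduli space $\Spec(A^H)$, putting us in the setting of \cite[Prop.~C.5]{EdidinRydh}. Applying that proposition étale-locally on $\sX$ yields that the maximal-stabilizer locus is a closed substack in each chart. Since being a closed immersion is étale-local on the target, these local closed immersions glue to give the conclusion for $\sX^{\max} \to \sX$, and the ``in particular'' about the finite diagonal of $X^{sT}$ follows immediately by base change from the finite diagonal of $X$.

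The main obstacle lies in ensuring the local instances of $\sX^{\max}$ furnished by \cite[Prop.~C.5]{EdidinRydh} are compatible with restriction along étale maps between different charts, so that the gluing is well-defined. This compatibility should come from the canonical functorial characterization of $\sX^{\max}$ in terms of the reduced identity component $\uAut_{\sX}(x)^0_\red$ (as in \remref{rem:noncottager}), combined with the upper semi-continuity of stabilizer dimensions used in the proof of \propref{prop:XTred Artin}; these together pin down $\sX^{\max}$ as an intrinsic substack independent of the local presentation.
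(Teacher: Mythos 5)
Your overall strategy --- use the linear reductivity of the stabilizers of $\sX=[X/T]$ to invoke a local structure theorem and reduce to the good-moduli-space situation of \cite[Prop.~C.5]{EdidinRydh} --- is the same as the paper's, but two steps do not hold up as written. First, your opening reduction (``it suffices to show $\sX^{\max}\to\sX$ is a closed immersion'') presupposes that $\sX^{\max}$ exists as a substack of $\sX$ in this generality, whereas Edidin--Rydh construct it only under their hypotheses (in particular with a good moduli space), which $\sX$ is not assumed to admit globally; removing that hypothesis is essentially the content of the theorem, so this is circular. The identification of \remref{rem:faraway} should only be invoked after one has already passed to a chart where \cite[Prop.~C.5]{EdidinRydh} applies. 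The object that is globally defined is the functor $X^{sT}$ of \defnref{defn:Zaglossus}, and the argument has to be run on it.

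Second, and more seriously, the étale-local step is exactly where the real work lies, and you leave it unproved. Since $X^{sT}$ is globally defined there is nothing to glue; what one needs is that its formation commutes with base change along the charts, i.e.\ that $Y^{sT}\simeq X^{sT}\fibprod_X Y$ for a $T$-equivariant étale representable $Y\to X$. This is not formal: for such a morphism, $\uAut_{[Y/T]}(y)$ maps by an \emph{open} immersion to the pullback of $\uAut_{[X/T]}(x)$, and one must show that every closed subgroup that is smooth and affine with connected $r$-dimensional fibres lifts through this open immersion (the paper does this by observing that the comparison map of subgroups is simultaneously an open and, fibrewise, a closed immersion, hence an inclusion of connected components, and then using connectedness). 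Your proposed substitute --- gluing the charts' maximal-stabilizer loci using semicontinuity and the reduced-identity-component description --- also founders on the fact that ``maximal dimension'' is not étale-local: a chart missing the locus of full-dimensional $T$-stabilizers has its own nonempty maximal locus of smaller dimension, so a naive gluing produces spurious components; working with the fixed rank $r=\rk(T)$, i.e.\ with $X^{sT}$ itself, is what avoids this. A minor further point: over a general base $S$ the relevant local structure theorem is \cite[Thm.~1.11]{AlperHallHalpernLeistnerRydh} rather than \cite{AlperHallRydhLuna}, whose main statement is over a field.
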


\begin{proof}
  Note that $X^{sT}$ is stable under base change by étale representable $T$-equivariant morphisms $p : Y \to X$, i.e., the induced morphism
  \[
    Y^{sT} \to X^{sT} \fibprod_X Y
  \]
  is an isomorphism.
  Equivalently, let us show that for every $Y$-scheme $y : A \to Y$ the map of sets
  \begin{equation}\label{eq:reminiscency}
    Y^{sT}(A) \to X^{sT} \fibprod_X Y(A)
  \end{equation}
  is bijective.
  Since $p$ is étale and representable, the morphism $I_\sY \to I_\sX \fibprod_\sX \sY$ (where $\sY=[Y/T]$) is an open immersion, so in particular $\uAut_\sY(y) \to \uAut_\sX(p(y)) \fibprod_\sX \sY$ is an open immersion.
  This shows that \eqref{eq:reminiscency} is injective, so it remains to show  surjectivity.
  Let $H$ be a closed subgroup of $\uAut_\sX(x) \fibprod_\sX \sY$ which is smooth and affine over $A$ with connected $r$-dimensional fibres.
  We claim that the open immersion of group schemes over $A$
  \[ H \fibprod_{\uAut_\sX(x) \fibprod_\sX \sY} \uAut_\sY(y) \to H \]
  is invertible (and hence $H$ lifts to a closed subgroup of $\uAut_\sY(y)$ as desired).
  This can be checked over points of $A$, so we may assume that $A$ is a field.
  Now by \cite[Tag~047T]{Stacks}, this morphism is also a closed immersion, hence an inclusion of connected components.
  But $H$ is connected, so we are done.

  Since $X$ has finite tame (hence linearly reductive) stabilizers, it follows from the short exact sequence \eqref{eq:stabses} that $\sX=[X/T]$ has linearly reductive stabilizers.
  Therefore we may apply the local structure theorem of \cite[Thm.~1.11]{AlperHallHalpernLeistnerRydh} to find for every point $x$ of $X$ an affine étale neighbourhood $\sY \to \sX$ of $x$ where $\sY$ is a quotient $[V/\GL_n]$ with $V$ affine.
  By base change this gives a $T$-equivariant affine étale neighbourhood $Y \to X$.
  Since the question is étale-local on $X$ and because $X^{sT}$ is stable under the base change $Y \to X$, we may therefore replace $X$ by $Y$, so that the quotient $\sX=[X/T]$ is now a global quotient of an affine scheme by $\GL_n$.
  In this case, either $X^{sT} = \initial$ or the claim follows by combining \remref{rem:faraway} and \cite[Prop.~C.5]{EdidinRydh}.
\end{proof}

\begin{proof}[Proof of \thmref{thm:fotch}]
  Recall that $X^{rhT}$ and $X^{sT}$ are closed substacks of $X$ (see \propref{prop:cobang} and \thmref{thm:inarch}).
  Let us first show that there is an inclusion $X^{sT} \sub X^{rhT}$.
  Let $x : A \to X$ be an $A$-valued point, where $A$ is an $S$-scheme, and let $G \sub \uAut_\sX(x)$ be a closed subgroup which is smooth affine over $A$ with $r$-dimensional connected fibres.
  The composite homomorphism
  \[ G \hook \uAut_\sX(x) \xrightarrow{\alpha_A} T_A \]
  is surjective over geometric points of $A$, since $G$ has $r$-dimensional fibres and the kernel is contained in $\uAut_X(x)$ which is quasi-finite.
  It follows that the geometric fibres of $G$ are tori of rank $r$ (for every geometric point $a$ of $A$, by \cite[Cor.~1 of Prop.~11.14]{Borel} the homomorphism $G_a \to T_{a}$ restricts to a finite surjection on a maximal subtorus $H$, but then $H = G_a$ because they are smooth and connected of the same dimension).

  Since $G$ is smooth and affine with geometric fibres of constant reductive rank, it follows from \cite[Exp.~XII, Thm.~1.7(b)]{SGA3} that it admits a maximal subtorus $H \sub G$ in the sense of \cite[Exp.~XII, Def.~1.3]{SGA3}.
  But then $H = G$ since we have $H_a = G_a$ for every geometric point $a$ of $A$ and $G$ and $H$ are flat over $A$.
  In particular $G$ is a torus, which we may assume is split, since this holds étale-locally on $A$ by \cite[B.3.4]{ConradReductive} (and $X^{sT}$ and $X^{rhT}$ are subsheaves of the étale sheaf $X$).
  Now $T' := G \to T_A$ is a reparametrization.
  Using the cartesian square \eqref{eq:fnb0p1bp}, we get a group-theoretic section of the homomorphism $\uAut_{[X/T']}(x) \to T'_A$, whence the desired lift of $x : A \to X^{sT}$ to $X^{rhT}$.

  It remains to show that the inclusion $X^{sT} \sub X^{rhT}$ is an effective epimorphism.
  Take a scheme $A$ and an $A$-valued point $x : A \to X^{rhT}$ which belongs to the open and closed substack $X^{rhT}_\rho$ for some reparametrization $\rho : T' \twoheadrightarrow T$ (\propref{prop:cobang}).
  This point corresponds to a group-theoretic section $s$ of $\alpha_A : \uAut_{[X/T']}(x) \to T'_A$.
  This is a closed immersion since $\alpha_A$ is separated (as a morphism between quasi-affine schemes).
  We will show that the composite homomorphism
  \[
    s' : T'_A \xrightarrow{s} \uAut_{[X/T']}(x) \to \uAut_{[X/T]}(x)
  \]
  is a closed immersion, and hence defines an $A$-point of $X^{sT}$.
  Since the second morphism is finite, the composite $s'$ is proper, so it is enough to show that it is a monomorphism.
  This can be checked over geometric points $a$ of $A$.
  The base change $s'_a$ yields a reparametrization $T'_a \twoheadrightarrow T_a$ which by \remref{rem:noncottager} is isomorphic to the reparametrization $\uAut_{[X/T]}(x_a)^0_\red \twoheadrightarrow T_a$.
  Then $T'_a \to \uAut_{[X/T]}(x_a)^0_\red$, as a morphism between abstractly isomorphic reparametrizations of $T_a$, must itself be an isomorphism (by \remref{rem:ferial}).
  In particular, $s'_a$ is a closed immersion.
\end{proof}

\section{Perfect obstruction theories}
\label{sec:POT}

For convenience of use in applications, we describe some analogous results in the language of \cite{BehrendFantechi,AranhaPstragowski}. We first recall the notion of perfect obstruction theory in the setting of Artin stacks.
We stick to $1$-Artin stacks for simplicity, and we assume that the six functor formalism $\D$ satisfies étale descent (e.g. it is the \inftyCat of Betti sheaves, étale sheaves, or rational motives).

\begin{defn}
  Let $f: X \to Y$ be a morphism of $1$-Artin stacks in $\Stk_k$, and let $ \phi: \cE \to L_{X/Y}^{\geq -1}  \in \Dcoh(X)$. We say that $\phi$ is an {\em obstruction theory} for $f$ if $h^{i}(\phi)$ are isomorphisms for all $i \geq 0$ and $h^{-1}(\phi)$ is surjective.
  We say that $\phi$ is a {\em perfect obstruction theory} if it is an obstruction theory and $\cE \in \Dperf(X)^{\geq -1}$.
\end{defn}

We now introduce an analog of the notion of quasi-smooth in weight zero.

\begin{defn}
  Let $T$ act on $1$-Artin stacks $X,Y\in\Stk_k$ and let $f : X \to Y$ be a $T$-equivariant morphism.
  Assume that the $T$-action on $X$ is trivial.
  We say that a morphism $\phi : \cE \to L_{X/Y}^{\geq-1}$ in $\Dcoh([X/T])$ is a {\em $T$-equivariant good obstruction theory} for $f : X \to Y$ if
  \begin{defnlist}
  \item $\phi$ is an obstruction theory, and
  \item $\cE^\fix \in \Dperf^{\geq -1}(X\times BT)$ and $\cE^\mov \in \Dperf^{\geq -2}(X\times BT)$.
  \end{defnlist}
\end{defn}

The construction of $T$-equivariant Gysin pull-back in Construction~\ref{constr:gys} for quasi-smooth in weight zero morphisms can be generalized to $T$-equivariant good obstruction theories.

\begin{constr}
  Let $T$ act on $1$-Artin stacks $S\in\Stk_k$ and $X,Y\in\Stk_S$.
  Let $f : X \to Y$ be a $T$-equivariant morphism over $S$.
  Assume that $X$ is quasi-DM and the $T$-action on $X$ is trivial.
  Let $\phi:\cE \to  L_{X/Y}^{\geq-1}$ be a $T$-equivariant good obstruction theory for $f : X \to Y$.
  Then we have a closed immersion $i : \mfr{C}_{X/Y} \hookrightarrow N_{X/Y}^\vir:=\V(\cE[-1])$ by \cite[Prop.~8.2(2)]{AranhaPstragowski}.
  We define the {\em $T$-equivariant virtual pullback}
  \begin{equation*}
    f^!_T : \Chom^{\BM,T}(Y_{/S})_\loc \to \Chom^{\BM,T}(X_{/S})_\loc
  \end{equation*}
  as the composite
  \begin{multline*}
    \Chom^{\BM,T}(Y_{/S})_\loc \xrightarrow{\sp_{X/Y}} \Chom^{\BM,T}((\mfr{C}_{X/Y})_{/S})_\loc \\
    \xrightarrow{i_*} \Chom^{\BM,T}((N^\vir_{X/Y})_{/S})_\loc \simeq \Chom^{\BM,T}(X_{/S})_\loc,
  \end{multline*}
  where the specialization map $\sp_{X/Y}$ is constructed from the deformation space $M^{\circ}_{X/Y}$ in \cite[Thm.~7.2]{AranhaPstragowski} and the isomorphism is \thmref{thm:homotopy}.
\end{constr}

The $T$-equivariant virtual pull-back commutes with proper push-forwards and ordinary virtual pull-backs.

\begin{prop}\label{Prop:POT.Bivariance}
  Let $S\in\Stk_S$ be $1$-Artin and suppose given a cartesian square
  \[
    \begin{tikzcd}
    X' \ar{r}{f'}\ar{d}{p}
    & Y' \ar{d}{q}
    \\
    X \ar{r}{f}
    & Y
  \end{tikzcd} \]
  of $T$-equivariant morphisms of $1$-Artin stacks in $\Stk_S$.
  Assume that $X$ and $X'$ are quasi-DM and the $T$-actions on $X$ and $X'$ are trivial.
  Let $\phi:\cE \to  L_{X/Y}^{\geq-1}$ be a $T$-equivariant good obstruction theory for $f$.
  Then the composite $\cE|_{X'} \to  L_{X/Y}^{\geq-1}|_{X'} \to  L_{X'/Y'}^{\geq-1}$ is also a $T$-equivariant good obstruction theory for $f'$.
  \begin{thmlist}
  \item If $q$ is proper, then there is a canonical homotopy
  \[f^!_T \circ q_* \simeq p_* \circ f'^!_T: \Chom^{\BM,T}(Y'_{/S})_\loc \to \Chom^{\BM,T}(X_{/S})_\loc.\]
  \item If $q$ is equipped with a perfect obstruction theory, then there is a canonical homotopy
  \[q^! \circ f^!_T \simeq f'^!_T \circ q^!: \Chom^{\BM,T}(Y_{/S})_\loc \to \Chom^{\BM,T}(X'_{/S})_\loc.\]
  \end{thmlist}
\end{prop}

We omit the proof of \propref{Prop:POT.Bivariance} since it follows by the arguments in \ssecref{ssec:sp} by replacing the derived deformation space $D_{X/Y}$ of \cite{HekkingKhanRydh} with the classical deformation space $M^\circ_{X/Y}$ of \cite{AranhaPstragowski} (cf. \cite[Thm.~4.1]{Manolache}).

\begin{thm}\label{Thm:POT.Funct}
  Let $S\in\Stk_k$ be $1$-Artin and let $f : X \to Y$ and $g : Y \to Z$ be $T$-equivariant morphisms of $1$-Artin stacks in $\Stk_S$.
  Assume that $X$ is quasi-DM and the $T$-action on $X$ is trivial.
  Let $\phi_{X/Y}:\cE_{X/Y} \to L_{X/Y}^{\geq-1}$, $\phi_{X/Z}:\cE_{X/Z} \to L_{X/Z}^{\geq-1}$ be $T$-equivariant good obstruction theories and $\phi_{Y/Z} : \cE_{Y/Z} \to L_{Y/Z}^{\geq-1}$ be a $T$-equivariant perfect obstruction theory.
  Assume that there exists a morphism of homotopy cofiber sequences
  \[\begin{tikzcd}
  f^*\cE_{Y/Z} \ar{r}\ar{d}{f^*\phi_{Y/Z}} & \cE_{X/Z} \ar{r}\ar{d}{\phi_{X/Z}} & \cE_{X/Y} \ar{d}{\phi_{X/Y}'} \\
  (f^*L_{Y/Z}^{\geq-1})^{\geq-1} \ar{r}{a} & L_{X/Z}^{\geq-1} \ar{r} & \Cofib(a)
  \end{tikzcd}\]
  with an equivalence
  \[\phi_{X/Y} \simeq r\circ \phi_{X/Y}' : \cE_{X/Y} \to  L_{X/Y}^{\geq-1}\]
  where $r: \Cofib(a) \to \Cofib(a)^{\geq-1} \simeq L_{X/Y}^{\geq-1}$ is the canonical map.
  Then we have a canonical homotopy
  \begin{equation*}
    (g \circ f)^!_T \simeq f^!_T \circ g^! : \Chom^{\BM,T}(Z_{/S})_\loc \to \Chom^{\BM,T}(X_{/S})_\loc.
  \end{equation*}
\end{thm}

Since the arguments are almost the same as in \propref{prop:funct}, we will only give a sketch proof of \thmref{Thm:POT.Funct}.

\begin{proof}[Sketch of the proof]
  Consider the composite
  \[ h : X\times \A^1 \to Y \times \A^1 \to M^{\circ}_{Y/Z}.\]
  We claim that there is a canonical isomorphism
  \[ L_{X\times \A^1/M^\circ_{Y/Z}}^{\geq-1} \simeq \Cofib(f^*L_{Y/Z} \boxtimes\cO_{\A^1} \xrightarrow{(T,a)} (f^*L_{Y/Z} \oplus L_{X/Z})\boxtimes \cO_{\A^1})^{\geq-1}\]
  where $T \in \Gamma(\A^1,\cO_{\A^1})$ is the coordinate function.
  Indeed, when $f$ and $g$ are DM morphisms of $1$-Artin stacks, the claim is shown in \cite{KimKreschPantev}. The general case follows from descent.

  Form a morphism of exact triangles
    \[\begin{tikzcd}
    f^*\cE_{Y/Z} \ar{r}\ar{d}{f^*\phi_{Y/Z}} & f^*\cE_{Y/Z}\oplus \cE_{X/Z} \ar{r}\ar{d}{f^*\phi_{Y/Z} \oplus \phi_{X/Z}} & \cE_h \ar{d}{\phi_h'} \\
    (f^*L_{Y/Z})^{\geq-1} \ar{r}{(T,a)} & (f^*L_{Y/Z})^{\geq-1}\oplus L_{X/Z}^{\geq-1} \ar{r} & \Cofib(T,a).
    \end{tikzcd}\]
  Then the composite
  \[\phi_h : \cE_h \xrightarrow{\phi'_h} \Cofib(T,a) \to \Cofib(T,a)^{\geq-1} \simeq L_h^{\geq-1} \]
  is a $T$-equivariant good obstruction theory for $h$.

  Consider the composites
  \[k:=0_{N^\vir_{Y/Z}} \circ f : X \to N^\vir_{Y/Z}\]
  where $N^\vir_{Y/Z}:=\V(\cE_{Y/Z}[-1])$. Then $k$ has a $T$-equivariant good obstruction theory
  \[\phi_k:=f^*\phi_{Y/Z} \oplus \phi_{X/Y} : f^*\cE_{Y/Z} \oplus \cE_{X/Y} \to \tau^{\geq-1} (f^*L_{Y/Z} \oplus L_{X/Y}).\]

  By \propref{Prop:POT.Bivariance}(ii), we have a canonical homotopy
  \[(g\circ f)^!_T \simeq k^!_T \circ \sp_{Y/Z} : \Chom^{\BM,T}(Z_{/S})_\loc \to \Chom^{\BM,T}(X_{/S})_\loc.\]
  Hence it suffices to show the proposition for
  \[X \to Y \to N^\vir_{Y/Z}.\]
  By the homotopy property of $\Chom^{\BM}((-)_{/S})$, it suffices to show the proposition for
  \[X \to N^\vir_{Y/Z} \to Z.\]
  Then the analogue of \propref{prop:sppush}\itemref{item:sppull} for classical specialization maps \cite{AranhaPstragowski} and smooth pullbacks completes the proof.
\end{proof}

\begin{prop}\label{Prop:POT.selfintersection}
  Let $S\in\Stk_k$ be $1$-Artin and $f : X \to Y$ be a $T$-equivariant morphism of $1$-Artin stacks in $\Stk_S$.
  Assume that $X$ is quasi-DM and the $T$-action on $X$ is trivial.
  Let $\phi:\cE \to  L_{X/Y}^{\geq-1}$ be a $T$-equivariant good obstruction theory for $f : X \to Y$.
  If $f : X \to Y$ is a closed immersion, then we have a canonical homotopy
  \[f^!_T \circ f_* \simeq e_T(N^\vir_{\sX/\sY}) \cap (-) :\Chom^{\BM,T}(X_{/S})_\loc \to \Chom^{\BM,T}(X_{/S})_\loc\]
  where $N^\vir_{\sX/\sY}:=\V(\cE[-1])$.
\end{prop}

\propref{Prop:POT.selfintersection} follows immediately from \corref{cor:spi_*}.

\begin{cor}
  Let $X\in\Stk_k$ be a Deligne--Mumford stack with a $T$-action.
  Let $\phi:\cE \to L_{X}^{\geq-1}$ be a $T$-equivariant perfect obstruction theory.
  Choose a reparametrization $\rho : T' \to T$ such that $X^{rhT} = X^{hT'}$.
  Then the composition
  $\cE|_{X^{hT'}}^\fix \to L_X|_{X^{hT'}}^{\geq-1} \to  L_{X^{hT'}}^{\geq-1}=L_{X^{rhT}}^{\geq-1}$
  is a perfect obstruction theory for $X^{rhT}$ and is independent of the choice of $T'$.
  Moreover, we have
  \[[X]^\vir = i_*( [X^{rhT}]^\vir \cap e_T(N^\vir))^{-1} \in \Chom^{\BM,T}(X)_\loc\]
  where $i : X^{rhT} \hookrightarrow X$ denotes the inclusion map and $N^\vir :=\V(\cE|_{X^{hT'}}^\mov)$.
\end{cor}



\bibliographystyle{halphanum}

Fakultät Mathematik, Universität Duisburg-Essen, Essen, 45127, Germany

Institute of Mathematics, Academia Sinica, Taipei, 10617, Taiwan

Mathematical Institute, University of Oxford, Oxford, OX2 6GG, United Kingdom

Department of Mathematical Sciences, Seoul National University, Seoul, 08826, Korea

School of Mathematics, Tata Institute of Fundamental Research, Mumbai, 400005, India

\end{document}